\documentclass[12pt]{amsart}
\usepackage{amssymb}
\usepackage{amsmath}
\usepackage{amsthm} 
\usepackage{graphicx,subcaption}
\usepackage{stmaryrd}
\usepackage{physics}
\usepackage[utf8]{inputenc}
\usepackage{amsfonts}
\usepackage{graphicx}
\usepackage{hyperref}
\usepackage{xcolor} 

\usepackage[foot]{amsaddr}

\makeatletter
\renewcommand{\email}[2][]{
  \ifx\emails\@empty\relax\else{\g@addto@macro\emails{,\space}}\fi%
  \@ifnotempty{#1}{\g@addto@macro\emails{\textrm{(#1)}\space}}%
  \g@addto@macro\emails{#2}
}
\makeatother

\addtolength{\oddsidemargin}{-2cm}
\addtolength{\evensidemargin}{-2cm}
\addtolength{\textwidth}{4cm}
\addtolength{\topmargin}{-0.5cm}
\addtolength{\textheight}{1cm}
\linespread{1.07}
\setlength{\parskip}{0.6\baselineskip}

\hypersetup{
	colorlinks   = true, 
	urlcolor     = blue, 
	linkcolor    = blue, 
	citecolor   = blue 
}

\numberwithin{equation}{section}
\newtheorem{theorem}{Theorem}[section]
\newtheorem{ex}[theorem]{Example}
\newtheorem{lemma}[theorem]{Lemma}
\newtheorem*{theorem*}{Theorem}
\newtheorem*{lemma*}{Lemma}
\newtheorem{claim}[theorem]{Claim}

 \newtheorem{remark}[theorem]{Remark}\newtheorem{proposition}[theorem]{Proposition}\newtheorem{definition}[theorem]{Definition}
\newtheorem{corollary}[theorem]{Corollary} \newcommand{\La}{\Lambda}
\newcommand{\R}{{\mathbb R}}  \newcommand{\Z}{{\mathbb Z}} \newcommand{\N}{{\mathbb N}}

\newcommand{\Cc}{{\mathbb C}}   \newcommand{\cc}{{\bf c}}
\newcommand{\Lam}{\Lambda}
 \newcommand{\Kk}{\mathcal{K}} \newcommand{\ccc}{\mathcal{S_\delta}}

\newcommand{\re}{{\rm Re}}\newcommand{\Ff}{\mathcal{G}}\newcommand{\Rr}{\mathcal{R}}\newcommand{\kk}{\mathcal{C}}\newcommand{\Hh}{\mathcal{H}}

\title[On  Gabor frames generated by ratios of exponential polynomials]{Sampling  in quasi shift-invariant spaces and  Gabor frames generated by ratios of exponential polynomials}
\thanks{I. Z. gratefully acknowledge support from the Austrian Science Fund (FWF) [\href{https://doi.org/10.55776/P33217}{10.55776/P33217}] and from the Research Council of Norway by Grant 334466, “Fourier Methods
and Multiplicative Analysis”}

 \author{Alexander Ulanovskii$^*$}
 \address{$*$ Department of Mathematics and Physics, University of Stavanger, 4036 Stavanger, Norway}
 \email{alexander.ulanovskii@uis.no}
 
 \author{Ilya Zlotnikov$^{\circ,\lhd}$}
 \address{$\circ$ Faculty of Mathematics, University of Vienna, Oskar-Morgenstern-Platz 1,
		A-1090 Vienna, Austria
  }\address{
  $\lhd$ Department of Mathematical Sciences, Norwegian University of Science and Technology (NTNU), 7491 Trondheim, Norway
  }
\email{ilia.zlotnikov@univie.ac.at}

\subjclass[2000]{42C15; 42C40; 94A20}
\keywords{Non-uniform sampling, Shift-invariant space, Gabor frame, Exponential polynomial, Quasi shift-invariant space.}

\begin{document}
\begin{abstract}
We introduce two families of generators (functions) $\Ff$ that consist of entire and meromorphic functions enjoying a certain periodicity property and contain the classical Gaussian and hyperbolic secant generators. Sharp results are proved on the density of separated sets that provide non-uniform sampling for the shift-invariant and quasi-shift-invariant spaces generated by elements of these families.
As an application, new sharp results are obtained on the density of semi-regular lattices for the Gabor frames generated by elements from these families.

\end{abstract}
 \date{}\maketitle

\section{Introduction and main results}

A countable set $\Gamma\subset\R$ is called separated if 
$$\inf_{\gamma,\gamma'\in\Gamma, \gamma \neq \gamma'}|\gamma-\gamma'|>0.$$
Given  a  generator (function) $\Ff$ with a "reasonably fast" decay at $\pm\infty$ and a number  $p,1\leq p\leq\infty$, the shift-invariant space $V_\Z^p(\Ff)$ consists of all functions $f$ of the form 
$$f(x)=\sum_{n\in\Z}c_n \Ff(x-n),\quad \{c_n\}\in l^p(\Z).$$ 

More generally, given a separated set $\Gamma\subset\R$,  the quasi shift-invariant space $V_\Gamma^p(\Ff)$  consists of all functions of the form 
$$f(x)=\sum_{\gamma\in\Gamma}c_\gamma \Ff(x-\gamma),\quad \{c_\gamma\}\in l^p(\Gamma).$$

 An important class of generators is the Wiener amalgam space $W_0$, which consists of  measurable  functions $\Ff: \R \to \Cc,$ satisfying \begin{equation}\label{wiener}\|\Ff\|_W := \sum\limits_{k \in \Z} \|\Ff\|_{L^\infty (k,k+1)} < \infty.\end{equation}

Shift-invariant and quasi shift-invariant  spaces have important applications in mathematics and engineering, in particular since they are often used as models for spaces of signals and images. It is also well known that there is a close connection between the Gabor frames and sampling sets for the shift-invariant spaces.

A classical example is the Paley--Wiener space $PW^2_{\pi}$ which is exactly the shift-invariant space $V^2_\Z(\Ff)$ generated by the sinc function $\Ff(x)=\sin(\pi x)/(\pi x).$
The remarkable result in digital signal processing is the  Shannon--Whittaker--Kotelnikov sampling theorem that states that every $f\in PW^2_{\pi}$ can be reconstructed from its values at the integers:
$$f(x)=\sum_{n\in\Z}f(n)\,\mbox{sinc}(x-n).$$
This implies that the set of integers $\Z$ is a stable sampling set for $PW^2_{\pi}$.

The theory of shift-invariant spaces is by now well developed and a number of sampling theorems are proved for various generators. Due to the  mentioned relation between Gabor frames and sampling in shift-invariant spaces,  certain sampling theorems are available for 
\begin{itemize}
    \item  B-splines, see \cite{MR1756138, gröchenig2023sampling},
    \item  Hermite functions, see  \cite{MR2292280,MR2529475, MR3027914},
    \item Truncated and symmetric exponential functions, see \cite{MR1621312,MR1964306}, 
    \item  Gaussian kernel (see \cite{Luef_gabor, MR1188007, PR_gabor, MR4782146, MR1173117, MR1173118}),  hyperbolic secant (see \cite{MR1884237} and a very recent paper \cite{baranov2023irregular}), and, more generally, totally positive functions, see \cite{grs, MR3053565},
    \item  Rational functions, see \cite{MR4345947, MR4542702}.
\end{itemize}
 
The sampling problem for quasi shift-invariant spaces is significantly more complicated and results are scarcer.
For the {\it totally positive} generators $\Ff$ of {\it finite type}, a sufficient condition for stable sampling in terms of covering (or maximum gap\footnote{See also a more general condition $(C_r(\varepsilon))$ in Theorem~16 in \cite{MR3053565}.}) for a quasi shift-invariant space was obtained in \cite[Theorems 2 and 16]{MR3053565}. 
The approach in this paper was based on an  
application of Schoenberg and Whitney's characterization of the invertibility of a pre-Gramian matrix generated by a totally positive function of finite type. 

In this paper, we introduce two families of generators. Using complex-analytic methods, we prove sharp results on the density of sampling sets for the corresponding shift-invariant and quasi shift-invariant spaces. As an application, we obtain new sharp results on the density of semi-regular lattices for the Gabor frames with generators from these families.

\subsection{Sampling sets and Beurling densities}

 A separated set $\Lambda\subset\R$ is called  a (stable) sampling set for $V^p_\Gamma(\Ff)$ if   the following sampling inequalities  $$A\|f\|_p^p\leq \sum_{\lambda\in\Lambda}|f(\lambda)|^p\leq B\|f\|_p^p, \quad 1 \le p < \infty,$$ $$\|f\|_{\infty} \leq K \sup\limits_{\lambda \in \Lambda} |f(\lambda)|,\quad p=\infty,$$ hold true with some positive constants $A,B,K$ and for every $ f\in V_\Gamma^p(\Ff).$ 

The  {\it lower and upper uniform densities}  of a separated set $\Lambda$ (sometimes called the Beurling densities) are defined by 
\begin{equation}\label{lowed}
   D^-(\Lambda) := \lim_{R\to\infty} \inf_{x\in \mathbb{R}} \frac{\# (\Lambda \cap [x-R,x+R])}{2R},
\end{equation}
\begin{equation}
   D^+(\Lam) := \lim_{R\to\infty} \sup_{x\in \mathbb{R}} \frac{\# (\Lam \cap [x-R,x+R])}{2R}. 
\end{equation}
These densities play a key role in the study of sampling and interpolation sets.

Throughout the paper, $\Gamma$ denotes a separated set of translates.
 To avoid trivial remarks, in what follows we always assume that  $\Gamma$ is {\it relatively dense}, i.e.  $D^-(\Gamma)>0.$

We will now introduce two classes of generators.

\subsection{Class \texorpdfstring{$\Kk(\alpha)$}{K(a)}}\label{sub_s_g}
Given a number $\alpha > 0$  and a rational function $\mathcal{R}=P/Q$,
we consider $2\pi i/\alpha$-periodic generator
\begin{equation}\label{g}\Ff(z)=\mathcal{R}(e^{\alpha z})=\frac{P(e^{\alpha z})}{Q(e^{\alpha z})}.\end{equation}

\begin{definition}\label{admis_def} 
We denote by $\Kk(\alpha)$ the class of all generators $\Ff$ defined in \eqref{g} where $P$ and $Q$ are non-trivial complex polynomials without common zeros and satisfying the following three conditions:
\begin{enumerate}
    \item[\rm ($A$)] $1\leq {\rm deg} \,P< {\rm deg}\, Q${\rm;}
    \item[\rm ($B$)] $P(0)=0${\rm;}
    \item[\rm ($C$)] $Q(x)\ne0, x\geq0$.
\end{enumerate}
\end{definition}

One may  check that conditions $(A)-(C)$ above are  necessary and sufficient for the  generator $\Ff$ defined in \eqref{g} to be integrable on $\R$,  and that these conditions imply
\begin{equation}\label{g0}|\Ff(x)|\leq C\exp\{-\alpha |x|\},\quad x\in\R,\end{equation}
where $C=C(\mathcal{R})$ is a constant. Hence every $\Ff\in \Kk$ has exponential decay at $\pm\infty.$

Note that the classical hyperbolic secant generator belongs to $\Kk(1)$, since it is of the form \eqref{g}, where $\alpha=1$ and $\mathcal{R}(z)=z/(1+z^2).$

Observe that for every rational function $\Rr$ one may find  the largest  integer $k$ such that $$\Rr(e^{2\pi i /k}z)=c\mathcal{R}(z),\quad z\in\Cc,$$where $c$ is a  constant.
For example, 
\begin{itemize}
    \item if $\mathcal{R}(z)=z/(1+z^2)$, then  $k=2$ and $c=-1$;
    \item if $\mathcal{R}(z)=z/(1+z^4)$, then  $k=4$ and $c=i$;
    \item if $\mathcal{R}(z)=z/(1+z)^2$, then  $k=1$ and $c=1$. 
   \end{itemize}

Then, clearly, the generator $\Ff$ defined in \eqref{g} satisfies $\Ff(z+2\pi i/k\alpha)=c\,\Ff(z), z\in\Cc.$
   
\begin{definition}\label{def2}Assume $\Ff$ is defined in \eqref{g}.

$(i)$ We denote by $k(\Ff)$ is the greatest integer $k$ such that the equality
\begin{equation}\label{sym}\Ff(z+2\pi i/k\alpha)=c\Ff(z),\quad z\in\Cc,\end{equation}holds with some constant $c\in\Cc$. 

$(ii)$ We set $q(\Ff)=$\,{\rm deg}$\,Q$, where $Q$ is the denominator in \eqref{g}.
\end{definition}

\subsection{Class \texorpdfstring{$\kk(\alpha)$}{C(a)} }\label{sub_s_G}
\begin{definition}\label{gauss_def} 
We denote by $\kk(\alpha)$ the class of all  generators $\Ff$ defined by
\begin{equation}\label{g1}\Ff(z)=e^{-\alpha z^2/2}\mathcal{R}(e^{\alpha z})=e^{-\alpha z^2/2}\frac{P(e^{\alpha z})}{Q(e^{\alpha z})},\end{equation}
where $\alpha>0$,  $P,Q$ are non-trivial complex polynomials without  common zeros and $Q(x)\ne 0, x\geq0$.
Again, we set $q(\Ff)=${\rm deg}$\,Q$, where $Q$ is the denominator in \eqref{g1}.
\end{definition}

Clearly, the assumption $Q(x)\ne0,x\geq0,$ is necessary and sufficient for a generator $\Ff$ defined in \eqref{g1} to be bounded on $\R$. Moreover, every such $\Ff$ has "Gaussian decay" at $\pm\infty$.

    The case $P=Q\equiv 1$ corresponds to the classical case of Gaussian generator and has been studied previously in \cite{MR1188007} and \cite{MR1173118}. 
    The results below hold true for this particular case. 

Note that  all generators from $\Kk(\alpha)$ and $\kk(\alpha)$ defined in~\eqref{g} and~\eqref{g1}  belong to  the Wiener amalgam space $W_0$  defined in \eqref{wiener}.

\begin{remark}
    One may extend the definition of classes $\Kk(\alpha)$ and $\kk(\alpha)$ by considering complex parameter $\alpha$ satisfying $\rm{Re\,} \alpha > 0$ and prove similar results. 
\end{remark}

\subsection{Stability of \texorpdfstring{$\Gamma$}{Gamma}-shifts}\label{sub_i_stab} 
Given a generator $\Ff$, a basic property of quasi shift-invariant space $V^p_\Gamma(\Ff)$ is that the $\Gamma$-shifts of $\Ff$ are $l^p$-stable, i.e. there exist positive constants $C_1$ and $C_2$ such that
\begin{equation}\label{stab}C_1\|\cc\|_p \le \left\|\sum_{\gamma \in \Gamma} c_{\gamma} \Ff(\,\cdot - \gamma) \right\|_p \le C_2 \|\cc\|_p,\quad \mbox{for every } \cc=\{c_\gamma\} \in l^p(\Gamma).\end{equation}
 This property implies that $V^p_\Gamma(\Ff)$ is a closed subspace of $L^p(\R)$ and that the system $\{\Ff(\,\cdot - \gamma)\}_{\gamma \in \Gamma}$ forms an unconditional basis in this  space.

The stability property of $\Z$-shifts is well-studied. 
The following is an immediate corollary of Theorem~3.5  in  \cite{Jia1991}:

\begin{lemma}\label{integershifts}
    Assume $\Ff\in W_0$ and $p\in[1,\infty]$. Then the integer-shifts of $\Ff$ are $l^p$-stable if and only if the Fourier transform $\hat\Ff$ of $\Ff$ satisfies
    \begin{equation}\label{vanish}
      \hat\Ff \mbox{ does not vanish on any set } \Z+b, \ 0\leq b<1.  
    \end{equation} 
   \end{lemma}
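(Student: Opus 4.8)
The plan is to obtain Lemma~\ref{integershifts} as a direct translation of Theorem~3.5 in \cite{Jia1991}, so the work splits into (a) checking that $\Ff\in W_0$ places $\Ff$ in the class to which that theorem applies, and (b) identifying the non-vanishing hypothesis of the cited theorem with condition~\eqref{vanish}. First I would record the basic consequences of membership in the Wiener amalgam space. Since $\|\Ff\|_{L^1(\R)}\le\|\Ff\|_W<\infty$, we have $\Ff\in L^1(\R)$, so $\hat\Ff$ is continuous and vanishes at infinity; moreover the amalgam estimate \eqref{wiener} provides the absolute summability needed to form, and to control, the periodized symbol associated with the integer translates. These are exactly the regularity and summability properties under which Jia's characterization is valid.

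Next I would recall the content of the cited theorem in the present normalization. It asserts that, for a generator in this class, the integer translates are $l^p$-stable precisely when the bi-infinite (Laurent-type) synthesis operator $\cc\mapsto\sum_n c_n\Ff(\,\cdot-n)$ is bounded below and above, and that both bounds are governed by the symbol built from the coset sequences $\{\hat\Ff(\xi+k)\}_{k\in\Z}$. The upper bound in \eqref{stab} is automatic from $\Ff\in W_0$; the substance is the lower bound, and the key output of Theorem~3.5 is that the lower bound holds, for one and hence for every $p\in[1,\infty]$, if and only if no such coset sequence vanishes identically.

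It then remains to match this non-vanishing condition with \eqref{vanish}. For a fixed $b\in[0,1)$, the coset sequence $\{\hat\Ff(k+b)\}_{k\in\Z}$ vanishes identically exactly when $\hat\Ff\equiv0$ on the set $\Z+b$; ranging over $b$ shows that ``no coset sequence vanishes identically'' is the same as ``$\hat\Ff$ does not vanish on any set $\Z+b$, $0\le b<1$'', which is \eqref{vanish}. For orientation in the Hilbert-space case one can see the mechanism directly: $\Ff\in W_0$ makes the periodization $\Phi(\xi)=\sum_{k\in\Z}|\hat\Ff(\xi+k)|^2$ a continuous $1$-periodic function, and since the torus is compact, $\inf_\xi\Phi(\xi)>0$ is equivalent to $\Phi(\xi)>0$ for all $\xi$, i.e. to \eqref{vanish}.

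I expect the only genuine obstacle to be the verification in step~(a) that $W_0$ is contained in the hypothesis class of Theorem~3.5 uniformly across $p$, together with a careful reading of the endpoint cases $p=1$ and $p=\infty$; the remarkable $p$-independence of the criterion is supplied by the cited theorem, ultimately a Wiener-type lemma for the symbol, rather than by any computation on our side.
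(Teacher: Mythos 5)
Your proposal follows exactly the route the paper takes: the paper offers no independent argument for this lemma but states it as an immediate corollary of Theorem~3.5 in the cited work of Jia and Micchelli, which is precisely the reduction you carry out (with the additional, reasonable, bookkeeping of matching the coset non-vanishing condition to \eqref{vanish} and checking that $W_0$ sits inside the hypothesis class). No discrepancy to report.
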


Throughout the paper, we consider the standard form of Fourier transform,
$$\hat\Ff(t):=\int_\R e^{-2\pi i xt}\Ff(x)\,dx.$$

We also mention   paper \cite{MR3762092}, which proves the $l^2$-stability of   $\Gamma$-shifts for certain generators $\Ff$ under the condition that $\Gamma$ is a  complete interpolating sequence for  the  Paley--Wiener space  $PW_{\pi}^2$.

We would like to get conditions on $\Gamma$ and $\Ff$ that imply property \eqref{stab} for every value of $p\in[1,\infty].$ 
In fact, the right hand-side inequality in \eqref{stab} is true for  every separated set $\Gamma$, every generator $\Ff\in W_0$ and every $p\in[1,\infty]$, see Lemma \ref{LemmaA1} below. On the other hand, under a mild additional condition on the generator, it suffices to check that  the left hand-side inequality is true for   $p=\infty$:

\begin{theorem}\label{tstab}
    Let $\Gamma$ be a separated set and $\Ff\in W_0\cap C(\R)$. If the left hand-side inequality in \eqref{stab} is true for $p=\infty$, then it is true for  every $p\in[1,\infty]$.
\end{theorem}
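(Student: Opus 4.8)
The plan is to deduce the lower bound at every $p$ from the single case $p=\infty$ by passing to a bi-infinite matrix with off-diagonal decay and exploiting the $p$-independence of lower stability bounds for such matrices. Throughout write $T\cc=\sum_{\gamma\in\Gamma}c_\gamma\Ff(\cdot-\gamma)$ and $f=T\cc$. By Lemma~\ref{LemmaA1} the synthesis operator $T\colon l^p(\Gamma)\to L^p(\R)$ is bounded for every $p\in[1,\infty]$, so only the left-hand (lower) inequalities require proof, and the case $p=\infty$ is exactly the hypothesis.

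The first step is discretization, and this is where continuity of $\Ff$ is used. Since $\Ff\in C(\R)$ vanishes at infinity (a consequence of $\Ff\in W_0$), it is uniformly continuous; moreover translation is continuous in $W_0$, so $\|\Ff-\Ff(\cdot+h)\|_W\le\eta(|h|)$ with $\eta(|h|)\to0$ as $h\to0$. Summing the shifts, this gives a uniform modulus of continuity $|f(x)-f(y)|\le\eta(|x-y|)\,\|\cc\|_\infty$ for $|x-y|\le1$, with $\|\cc\|_\infty\le\|\cc\|_p$. Fixing a sufficiently fine grid $X=\tfrac1K\Z$, a Riemann-sum comparison then yields two-sided equivalences, valid for all $\cc$ and all $p$, between $\|f\|_{L^p(\R)}$ and the discrete quantity $\big(\tfrac1K\sum_{x\in X}|f(x)|^p\big)^{1/p}$ (respectively $\sup_{x\in X}|f(x)|$ when $p=\infty$), the errors being made negligible by the modulus of continuity once $K$ is large. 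Thus the theorem reduces to a purely discrete statement about the matrix $A=\big(\Ff(x-\gamma)\big)_{x\in X,\ \gamma\in\Gamma}$: if $A$ is bounded below on $l^\infty$, then $A$ is bounded below on every $l^p$.

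The matrix $A$ inherits off-diagonal decay from $\Ff\in W_0$, namely $|A_{x,\gamma}|\le w(x-\gamma)$ for an envelope $w$ with finite unit-block sums, so $A$ is convolution-dominated, with index sets $X$ and $\Gamma$ both separated and relatively dense. For matrices in this class the lower stability bound is $p$-independent: it holds for one exponent in $[1,\infty]$ if and only if it holds for all of them. I would establish this either by citing it in the form proved for sampling and reconstruction by Banach-frame matrices (Aldroubi--Baskakov--Krishtal; Q.~Sun), or by reproving it through the inverse-closedness (Wiener's lemma) of the convolution-dominated matrix algebra: bounded-belowness of $A$ amounts to invertibility of the positive matrix $A^{*}A$, which lies in the same algebra, and spectral invariance propagates this invertibility across all $l^p$ simultaneously.

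The main obstacle is precisely this transfer of the lower bound across exponents. Naive ``diagonal'' estimates or normal-families/compactness arguments fail, because a coefficient sequence may be normalized in $l^p$ while spreading its mass so thinly that $\|\cc\|_\infty\to0$; the $l^\infty$ lower bound then sees nothing, and only the quantitative off-diagonal decay of $A$—encoded in the inverse-closedness of the algebra—compensates for this spreading. Everything else (the Riemann-sum comparisons and the Wiener-amalgam tail bounds used to build $A$ and to pass back to $L^p$) is routine, so the conceptual weight of the proof rests entirely on the $p$-independence of lower stability for convolution-dominated matrices.
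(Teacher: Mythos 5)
Your overall strategy is the same as the paper's: discretize the lower inequality, observe that the resulting matrix $A=(\Ff(\lambda-\gamma))$ has an envelope in $W_0$, and invoke the $p$-independence of lower bounds for such matrices (the paper uses Lemma~\ref{interpol_sampl_lemma_A}, i.e.\ Proposition~A.1 of \cite{MR3336091}; your citations of Sun/Aldroubi--Baskakov--Krishtal or spectral invariance of convolution-dominated matrices are the same toolbox). One caveat on your ``reprove via invertibility of $A^{*}A$'' alternative: that route naturally starts from a lower bound on $\ell^2$, whereas the hypothesis here is a lower bound at $p_0=\infty$; getting from $\infty$ to $2$ is exactly the nontrivial direction, so you do need the full strength of the cited results rather than plain Wiener-lemma spectral invariance. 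Since you also offer the citation route, this is not fatal.

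The genuine gap is in your back-transfer from the grid to $L^p$ for $p<\infty$. You control the discretization error by $|f(x)-f(y)|\le\eta(|x-y|)\,\|\cc\|_\infty$, a \emph{uniform additive} error. In the Riemann-sum comparison this error must be accumulated in $\ell^p$ over \emph{all} grid cells, and a constant summed to the $p$-th power over infinitely many cells diverges; so the two-sided equivalence between $\|f\|_{L^p}$ and $\bigl(\tfrac1K\sum_{x\in X}|f(x)|^p\bigr)^{1/p}$ does not follow from the sup-norm modulus of continuity alone. The repair is available from what you already wrote: since $\|\Ff(\cdot+h)-\Ff\|_W\to0$, the oscillation envelope $\omega_{1/K}\Ff$ lies in $W_0$ with small $W$-norm, and a Lemma~\ref{LemmaA1}-type estimate gives $\bigl(\tfrac1K\sum_{x\in X}\sup_{|h|\le 1/K}|f(x+h)-f(x)|^p\bigr)^{1/p}\le C\|\omega_{1/K}\Ff\|_{W}\|\cc\|_p$, which is the bound you actually need. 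The paper sidesteps this issue entirely by choosing the sample point $\lambda_k$ in each cell \emph{adaptively} as a minimizer of $|f|$, so that $\|f\|_p^p\ge(\delta/2)\sum_k|f(\lambda_k)|^p$ holds with no error term; the price is that the matrix lower bound must hold uniformly over the whole family of admissible point configurations, which is why Remark~\ref{rr} (uniformity of the constant in Lemma~\ref{interpol_sampl_lemma_A}) is invoked. Either route works once the error estimate is stated in the amalgam norm rather than the sup norm.
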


In what follows, we will say that $\Gamma$-shifts of $\Ff$ are stable, if they are $l^p$-stable, for every $p\in[1,\infty].$

Let us now recall Beurling’s notion of the weak limit of a sequence of
sets. A sequence $\{\Gamma(n): n\in\N\}$ of separated subsets of $\R$ is said to converge weakly to a separated set
$\Gamma\subset\R$, if for every $R>0$ and $\epsilon > 0$,
there exists $n_{\epsilon,R}\in\N$ such that for all $n \geq n_{\epsilon,R}$ we have $$
\Gamma(n)\cap (-R,R) \subset \Gamma + (-\epsilon, \epsilon) \ \mbox{ and }\ \Gamma \cap(-R, R) \subset \Gamma(n) + (-\epsilon, \epsilon).$$

Given a separated and relatively dense set $\Gamma\subset\R$,  it is  easy to check that  every real sequence $\{s_j\}$ contains a subsequence $\{s_{j_n}\}$ such that the translates $\Gamma+s_{j_n}$ converge weakly to some  separated relatively dense set $\Gamma'$ as $n\to\infty$.
Let $W(\Gamma)$ denote the collection of all such weak limits. It is well-known (and easy to check) that  
\begin{equation}\label{upperlow}
    D^-(\Gamma')\geq D^-(\Gamma) \ \mbox{and }  D^+(\Gamma')\leq D^+(\Gamma),\quad \mbox{for every }\ \Gamma'\in W(\Gamma).
\end{equation}
As a simple example, we  observe that 
\begin{equation}\label{eex}
   \mbox{The set } W(\Z) \mbox{ consists of all sets } \Z+a,\ a\in [0,1). 
\end{equation}

We will now formulate some sufficient conditions for the stability of $\Gamma$-shifts for the  generators from the families $\Kk(\alpha)$ and $\kk(\alpha)$  defined above. These will be given in terms of the set  
 $\Gamma$ and the poles $w_j$ of the rational function $\mathcal{R}$ in \eqref{g} and \eqref{g1}.

Denote by $d(w_j)$ the order of the pole $w_j$ and set $d: = \max\limits_{j} d(b_j)$. Let  ${\rm pol}_d(\mathcal{R})$ denote the set of poles of $\mathcal{R}$ of order $d.$ 

We will consider the following assumptions:
 \begin{enumerate}
        \item[($\Xi'$)] The set ${\rm pol}_d(\mathcal{R})$ consists precisely of one element{\rm ;} 
        \item[($\Xi''$)] For every $\Gamma'\in W(\Gamma)$ there exists $w \in {\rm pol}_d(\mathcal{R})$ and $\gamma' \in \Gamma'$ such that
        \begin{equation}\label{elog}
         \log(w) - \log(w') \notin \alpha (\Gamma' - \gamma') \quad \text{for any } w' \in {\rm pol}_d(\mathcal{R}), \, w \neq w'.
        \end{equation}
    \end{enumerate}
      When $\Gamma = \Z$, it follows easily from  \eqref{eex} that  \eqref{elog} is equivalent to the simpler condition 
      \begin{enumerate}
          \item[($\Xi'''$)] $\log(w) - \log(w') \notin \alpha \Z$, \text{for every} $w' \in {\rm pol}_d(\mathcal{R}), \, w \neq w'.$  
                         \end{enumerate}

\begin{proposition}\label{pstab}
 {\rm (i)} If $\Ff\in\kk(\alpha)$ satisfies either $(\Xi')$ or $(\Xi''')$, then $\Z$-shifts of $\Ff$ are stable.

   {\rm (ii)} If $\Ff\in\Kk(\alpha)$ satisfies  either $(\Xi')$ or $(\Xi'')$, then $\Gamma$-shifts of $\Ff$ are stable.
\end{proposition}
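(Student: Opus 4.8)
The plan is to reduce both parts to a single analytic fact: that no nontrivial bounded coefficient sequence can annihilate the relevant shifts of $\Ff$. For part (i) I would invoke Lemma \ref{integershifts}: since $\Ff\in\kk(\alpha)\subset W_0$, the integer shifts are stable for all $p$ as soon as $\hat\Ff$ does not vanish on any coset $\Z+b$, $0\le b<1$. Fixing such a $b$, the values $\{\hat\Ff(b+n)\}_{n\in\Z}$ are the Fourier coefficients of the $1$-periodization of $e^{-2\pi i b x}\Ff(x)$, so $\hat\Ff$ vanishes on $\Z+b$ precisely when $g_b(x):=\sum_{n\in\Z}e^{-2\pi i bn}\Ff(x+n)\equiv0$. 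Writing $\Ff(x+n)=e^{-\alpha x^2/2}e^{-\alpha n^2/2}e^{-\alpha nx}\mathcal{R}(e^{\alpha n}e^{\alpha x})$ and substituting $w=e^{\alpha x}$ gives $g_b(x)=e^{-\alpha x^2/2}\,\Phi_b(w)$, where $\Phi_b(w)=\sum_{n}e^{-2\pi i bn}e^{-\alpha n^2/2}w^{-n}\mathcal{R}(e^{\alpha n}w)$. The Gaussian factor $e^{-\alpha n^2/2}$ makes $\Phi_b$ a locally uniformly convergent sum of rational functions, hence a meromorphic function of $w$, and $g_b\equiv0$ forces $\Phi_b\equiv0$.

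For part (ii) I would first apply Theorem \ref{tstab} together with Lemma \ref{LemmaA1}, so that it suffices to establish the lower inequality in \eqref{stab} for $p=\infty$. Arguing by contradiction via a standard weak-limit (normal families) compactness argument, failure of the lower bound produces a set $\Gamma'\in W(\Gamma)$ and a bounded, not identically zero sequence $\{c_\gamma\}$ with $\sum_{\gamma\in\Gamma'}c_\gamma\Ff(\cdot-\gamma)\equiv0$; the local uniform convergence of the normalized near-extremal combinations to such a limit uses $\Ff\in W_0\cap C(\R)$, the exponential decay \eqref{g0}, and the separation of $\Gamma'$. Thus part (ii) reduces to showing that, under $(\Xi')$ or $(\Xi'')$, no such annihilating sequence exists on any $\Gamma'\in W(\Gamma)$. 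Writing $\Ff(z-\gamma)=\mathcal{R}(e^{-\alpha\gamma}e^{\alpha z})$ and substituting $w=e^{\alpha z}$, the identity becomes $\Psi(w):=\sum_{\gamma\in\Gamma'}c_\gamma\mathcal{R}(e^{-\alpha\gamma}w)\equiv0$, where conditions $(A)$ and $(B)$ (equivalently the decay \eqref{g0}) guarantee absolute, locally uniform convergence to a function meromorphic on $\Cc\setminus\{0\}$.

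In both cases the conclusion comes from inspecting the poles of maximal order. The $\gamma$-th (resp.\ $n$-th) summand has poles at $w=e^{\alpha\gamma}w_j$ (resp.\ $w=e^{-\alpha n}w_j$), of order $d$ exactly when $w_j\in\mathrm{pol}_d(\mathcal{R})$, and the leading Laurent coefficient there equals $c_\gamma$ (resp.\ $e^{-2\pi i bn}e^{-\alpha n^2/2}$ times the relevant power) times a nonzero constant. Two order-$d$ poles coincide, $e^{\alpha\gamma}w_j=e^{\alpha\gamma'}w_{j'}$, only if $\log w_j-\log w_{j'}\in\alpha(\Gamma'-\Gamma')$; for a fixed $w_j$ all locations are distinct. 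Under $(\Xi')$ there is a single max-order pole $w_0$, so the locations $e^{\alpha\gamma}w_0$ are all distinct, each is hit by one summand, and $\Psi\equiv0$ forces every $c_\gamma=0$; the same computation with $(\Xi''')$ settles part (i), since $(\Xi''')$ rules out all collisions between distinct max-order poles and therefore makes $\Phi_b\equiv0$ impossible (it would force some $e^{-2\pi i bn}=0$).

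The main obstacle is the case $(\Xi'')$ in part (ii), where several max-order poles are present and their orbits $\{e^{\alpha\gamma}w_j\}$ may overlap, so that at a colliding location the vanishing of the leading Laurent coefficient becomes a linear relation between two coefficients $c_\gamma,c_{\gamma'}$ rather than forcing either to vanish. Condition $(\Xi'')$ supplies, for each $\Gamma'\in W(\Gamma)$, one pole location that is \emph{not} shared, hence one coefficient that must vanish; the task is to propagate this seed to all coefficients. I expect to track the recurrences linking coefficients whose poles collide, which couple $c_\gamma$ to $c_{\gamma+\delta}$ with $\delta=(\log w_j-\log w_{j'})/\alpha$, and to exploit the translation-closedness of $W(\Gamma)$ — that $\Gamma'-\gamma_0\in W(\Gamma)$ for every $\gamma_0\in\Gamma'$ — so that each such chain of coefficients eventually reaches an unshared, terminating pole and collapses to zero. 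Verifying that this propagation exhausts every coefficient, uniformly over all weak limits, is the delicate point of the argument.
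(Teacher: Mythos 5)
Your overall strategy coincides with the paper's: reduce stability to the nonexistence of a nontrivial bounded annihilating sequence, substitute $w=e^{\alpha z}$, and inspect the poles of maximal order $d$. For part (i) you route through Lemma~\ref{integershifts} and the periodization $\sum_n e^{-2\pi i bn}\Ff(x+n)$, whereas the paper goes through the weak-limit Lemma~\ref{lstab} together with \eqref{eex}; your variant is legitimate and in fact slightly cleaner, since the coefficients $e^{-2\pi i bn}e^{-\alpha n^2/2}$ are all nonzero, so under $(\Xi')$ or $(\Xi''')$ the distinguished order-$d$ poles cannot cancel and the contradiction is immediate. Part (ii) under $(\Xi')$ is likewise complete and is essentially the paper's argument: the orbit $\{w_1e^{\alpha\gamma'}\}$ of the unique maximal-order pole consists of pairwise distinct points (because $\alpha>0$ and $\Gamma'\subset\R$), each met by exactly one summand, so the identity $\Psi\equiv0$ forces every $c_{\gamma'}=0$.

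The genuine gap is the case $(\Xi'')$ of part (ii), which you explicitly leave unfinished: you propose to propagate the vanishing of one coefficient along chains of colliding poles, invoking translation-closedness of $W(\Gamma)$, but you state yourself that verifying that this propagation ``exhausts every coefficient'' is unresolved. As submitted, the proposal therefore does not prove part (ii) under $(\Xi'')$. The paper closes this case with no propagation at all: for the pair $(w,\gamma_0')$ supplied by $(\Xi'')$, condition \eqref{elog} guarantees that the location $we^{\alpha\gamma_0'}$ is not of the form $w'e^{\alpha\gamma'}$ for any other $w'\in{\rm pol}_d(\mathcal{R})$ and any $\gamma'\in\Gamma'$, so the order-$d$ part of the Laurent expansion of $h$ at that point comes from the single summand indexed by $\gamma_0'$; the paper concludes that $h$ has a genuine pole of order $d$ there, contradicting $h\equiv0$. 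You are right that, read literally, this computation only yields $c_{\gamma_0'}=0$ unless one knows a priori that the distinguished coefficient is nonzero --- a point the paper passes over --- but noting the subtlety is not the same as resolving it: the propagation step is precisely what your plan requires and it is not supplied. If you want a complete argument within the paper's framework, carry out the Laurent-coefficient computation at the distinguished pole directly rather than building recurrences between coefficients.
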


\subsection{Sampling for generators from \texorpdfstring{$\Kk(\alpha)$}{K(a)}  and \texorpdfstring{$\kk(\alpha)$}{C(a)}}

Our first sampling theorem concerns the class of generators $\Kk(\alpha)$.

\begin{theorem}\label{sampling_thm1}
Given a generator $\Ff\in \Kk(\alpha), \alpha>0$, and two separated sets $\Lambda,\Gamma\subset \R$. Assume that  $\Gamma$-shifts of $\Ff$ are stable and that
\begin{equation}\label{d_pm_cond}
D^-(\Lambda)>\frac{q(\Ff)}{k(\Ff)}D^{+}(\Gamma).   
\end{equation}
Then $\Lambda$ is a sampling set for $V^p_\Gamma(\Ff)$, for every $p\in[1,\infty]$.
\end{theorem}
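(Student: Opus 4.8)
The plan is to argue by contradiction via Beurling's weak-limit (homogenisation) scheme, which turns the sampling inequality into a uniqueness statement, and then to settle that statement by bounding the density of the \emph{real} zeros of functions in $V^\infty_{\Gamma'}(\Ff)$; the threshold $q(\Ff)/k(\Ff)$ is exactly where the structure of $\Ff$ as a ratio of exponential polynomials enters. Assume first $p=\infty$ and that the conclusion fails. Then there are $f_n=\sum_{\gamma\in\Gamma}c^{(n)}_\gamma\Ff(\cdot-\gamma)$ with $\|f_n\|_\infty=1$ and $\sup_{\lambda\in\Lambda}|f_n(\lambda)|\to0$. By stability \eqref{stab} the coefficients satisfy $\|c^{(n)}\|_\infty\asymp1$, so I may choose $\gamma_n\in\Gamma$ with $|c^{(n)}_{\gamma_n}|$ bounded below and translate everything by $x_n=\gamma_n$. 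Passing to a subsequence, I let $\Gamma-x_n\to\Gamma'$ and $\Lambda-x_n\to\Lambda'$ converge weakly \emph{along the same sequence}, with $\Gamma'\in W(\Gamma)$, and let the translated coefficients converge coordinatewise. Since $\Ff\in W_0$ decays (exponentially for $\Kk(\alpha)$), $f_n(\cdot+x_n)$ converges locally uniformly to $f=\sum_{\gamma'\in\Gamma'}c_{\gamma'}\Ff(\cdot-\gamma')\in V^\infty_{\Gamma'}(\Ff)$. For general $p$ one reduces to this case by the $L^p$-independence of the sampling property for these shift-invariant spaces, in the same spirit as the stability transference of Theorem~\ref{tstab}.

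The index-$0$ coefficient of $f$ has modulus bounded below, and the hypotheses $(\Xi')$/$(\Xi'')$ of Proposition~\ref{pstab} are designed to pass to \emph{every} $\Gamma'\in W(\Gamma)$; hence $\Gamma'$-shifts of $\Ff$ are again stable and $f\not\equiv0$. Local uniform convergence together with $\sup_\lambda|f_n(\lambda)|\to0$ yields $f(\lambda')=0$ for all $\lambda'\in\Lambda'$, and \eqref{upperlow} preserves the density gap,
\[ D^-(\Lambda')\ge D^-(\Lambda)>\frac{q(\Ff)}{k(\Ff)}D^+(\Gamma)\ge\frac{q(\Ff)}{k(\Ff)}D^+(\Gamma'). \]
It therefore suffices to prove that a nonzero $f\in V^\infty_{\Gamma'}(\Ff)$ cannot vanish on a separated set whose lower density exceeds $\frac{q(\Ff)}{k(\Ff)}D^+(\Gamma')$.

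Every such $f$ inherits from \eqref{sym} the quasi-periodicity $f(z+2\pi i/(k(\Ff)\alpha))=c\,f(z)$; hence $e^{-\ell\alpha z}f(z)$ is genuinely $2\pi i/(k(\Ff)\alpha)$-periodic for a suitable integer $\ell$ and descends to a function $G$ of the variable $W=e^{k(\Ff)\alpha z}$, the real zeros of $f$ corresponding bijectively to the positive-real zeros of $G$. The symmetry \eqref{sym} forces $Q(w)=\tilde Q(w^{k(\Ff)})$ with $\deg\tilde Q=q(\Ff)/k(\Ff)$, and a parallel factorisation of $P$, so that
\[ G(W)=\sum_{\gamma'\in\Gamma'}\tilde c_{\gamma'}\,\frac{\tilde P(\tilde\beta_{\gamma'}W)}{\tilde Q(\tilde\beta_{\gamma'}W)},\qquad \tilde\beta_{\gamma'}=e^{-k(\Ff)\alpha\gamma'}. \]
When $\Gamma'$ is \emph{finite} with $N$ points, clearing denominators writes $G$ as a rational function whose numerator has degree $<N\,q(\Ff)/k(\Ff)$ — here $(A)$ gives $\deg\tilde P<\deg\tilde Q$ and $(B)$ places a zero at the origin — so $G$, and a fortiori $f$ on $\R$, has at most $N\,q(\Ff)/k(\Ff)$ real zeros.

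The main obstacle is to promote this exact finite bound to a density statement for infinite $\Gamma'$. I would count the zeros of $f$ in $[-R,R]$ by the argument principle on a thin pole-free rectangle $[-R,R]\times[-h,h]$ (condition $(C)$ keeps the poles of $\Ff$ off a strip about $\R$), estimating the variation of $\arg f$ along the two long edges from the asymptotics of $\Ff$ at $\pm\infty$ and showing, by a localisation, that only the $\gamma'\in\Gamma'$ within $o(R)$ of $[-R,R]$ contribute up to a negligible error. This should give
\[ \#\{x\in[-R,R]:f(x)=0\}\le\frac{q(\Ff)}{k(\Ff)}\,\#\bigl(\Gamma'\cap[-R-o(R),R+o(R)]\bigr)+o(R), \]
whence the upper zero-density of $f$ is at most $\frac{q(\Ff)}{k(\Ff)}D^+(\Gamma')$; controlling the tail of the infinite sum uniformly near the zeros of $f$, where $|f|$ is small and cancellation is delicate, is the technical heart. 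Granting this, if $f$ vanished on $\Lambda'$ then $\#(\Lambda'\cap[-R,R])\le\#\{x\in[-R,R]:f(x)=0\}$, forcing $D^+(\Lambda')\le\frac{q(\Ff)}{k(\Ff)}D^+(\Gamma')$ and contradicting the displayed gap since $D^-(\Lambda')\le D^+(\Lambda')$. Hence no such $f$ exists and $\Lambda$ is a sampling set.
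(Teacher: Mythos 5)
Your overall architecture --- a Beurling weak-limit argument reducing the $p=\infty$ sampling inequality to a uniqueness statement for $V^\infty_{\Gamma'}(\Ff)$ with $\Gamma'\in W(\Gamma)$, $\Lambda'\in W(\Lambda)$, followed by a Wiener-type transference to general $p$ --- coincides with the paper's (which carries out the $L^p$-transfer via Lemma~\ref{interpol_sampl_lemma_A}); a small slip is that you invoke $(\Xi')$/$(\Xi'')$ to get $f\not\equiv0$, whereas the theorem only assumes stability of the $\Gamma$-shifts, which already rules out a vanishing weak limit with nontrivial bounded coefficients. The genuine gap is in the uniqueness step. You propose to count the \emph{real} zeros of $f$ directly: descend to $G(W)$ with $W=e^{k(\Ff)\alpha z}$, obtain the exact bound $N\,q(\Ff)/k(\Ff)$ on the number of zeros when $\Gamma'$ is finite with $N$ points, and then ``promote'' this to a density bound for infinite $\Gamma'$ by the argument principle on a thin rectangle about $\R$. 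That promotion is precisely what is not done, and it is not a routine localisation: near a real zero of $f$ the modulus is small, and although the tail $\sum_{|\gamma'-x|>R}c_{\gamma'}\Ff(x-\gamma')$ is $O(e^{-\alpha R})$ in absolute value, the finite main term has no lower bound there, so one cannot control the variation of $\arg f$ along the long edges of the rectangle by comparison with a finite sub-sum. You flag this yourself (``this should give'', ``the technical heart''), so the proof is incomplete at its central point.

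The paper avoids real-zero counting entirely. It extends $f$ meromorphically and exploits the $2\pi i/\alpha$-periodicity together with the symmetry \eqref{sym} to get
$\mathrm{Zer}(f)\supseteq\Lambda'+\tfrac{2\pi i}{k(\Ff)}\{0,\dots,k(\Ff)-1\}+2\pi i\Z$ and
$\mathrm{Pol}(f)\subseteq\Gamma'+W+2\pi i\Z$ with $\#W=q(\Ff)$; the density hypothesis \eqref{d_pm_cond} then yields $n_0(t)\ge n_\infty(t)+Ct^2$ for the counting functions in disks (Lemma~\ref{l01}), while a direct estimate gives $|f(z)|\le CR^{q(\Ff)+1}$ on a sequence of circles staying away from the poles (Lemma~\ref{l02}). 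Jensen's formula for meromorphic functions then pits a quadratic lower bound against a logarithmic upper bound. Completing your route would amount to reproving this two-dimensional count in a one-dimensional disguise; I recommend switching to the Jensen-formula argument, where the tail-control problem you identified disappears because only an upper bound for $|f|$ on large circles is needed, never a lower bound near its zeros.
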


Recall that the numbers $q(\Ff)$ and $k(\Ff)$ are defined in Definition \ref{def2}.

\begin{remark}
    Note that condition~\eqref{d_pm_cond} is independent of parameter $\alpha$. However, above we assume the stability of $\Gamma$-shifts. This property can be violated for certain values of $\alpha$.
\end{remark}

Let
\begin{equation}\label{sec_eq1}
\Hh(x):=\frac{e^{x}}{e^{2x} + 1}\in \Kk(1)    
\end{equation}
denote the hyperbolic secant generator. Consider the family of all finite linear combinations
\begin{equation}\label{fff}\Ff(x)=\sum_{j=1}^N a_j \Hh(x-b_j)=\sum_{j=1}^Na_je^{b_j}\frac{e^x}{e^{2x}+e^{2b_j}}\in \Kk(1),\quad a_j\in\Cc, e^{2b_j}\not\in (-\infty,0).\end{equation}
Clearly, every $\Ff$ in \eqref{fff} satisfies $k(\Ff)=2$ and $q(\Ff)=2N.$ Assuming the stability of $\Z$-shifts of $\Ff$, Theorem \ref{sampling_thm1} implies that every separated set $\La$ satisfying $D^-(\Lambda)>N$ is a sampling set for $V_\Z^p(\Ff), 1\leq p\leq\infty.$ This result is sharp:

\begin{theorem}\label{sharp_thm}
  For every $N\geq2$ there exist  $a_j\in\R, b_j>0, j=1,\dots,N,$ and a separated set $\Lambda, D^{-}(\La)=N,$ such that   the generator $\Ff(z)$ in \eqref{fff} has  stable 
  $\Z$-shifts and $\La$ is not a uniqueness set for $V^{\infty}_{\Z}(\Ff)$. 
 \end{theorem}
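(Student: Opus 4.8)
The plan is to prove non-uniqueness by exhibiting, for a suitable choice of parameters, a \emph{nonzero} element of $V^\infty_\Z(\Ff)$ that vanishes on a separated set of density exactly $N$. Write $g(t):=\sum_{j=1}^N a_j e^{-2\pi i b_j t}$, so that $\hat\Ff(t)=\hat\Hh(t)\,g(t)$, where $\hat\Hh(t)=\tfrac{\pi}{2\cosh(\pi^2 t)}$ is positive, even, and decays like $e^{-\pi^2|t|}$. The candidate null function will be an (anti)periodization: set $\theta=0$ if $N$ is even and $\theta=\tfrac12$ if $N$ is odd, put $c_n=e^{2\pi i\theta n}$ (so $c_n=1$ or $c_n=(-1)^n$), and let
\[
f(x):=\sum_{n\in\Z}c_n\,\Ff(x-n)=\sum_{k\in\Z}\hat\Ff(\theta+k)\,e^{2\pi i(\theta+k)x}.
\]
Since $\{c_n\}\in l^\infty$ and $\Ff\in W_0$, this $f$ lies in $V^\infty_\Z(\Ff)$; it is $1$-periodic (even $N$) or $1$-antiperiodic (odd $N$), with spectrum on $\theta+\Z$.

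The heart of the construction is to annihilate the low-frequency coefficients of $f$, leaving $\pm N/2$ as the dominant frequencies. I would fix distinct $b_1,\dots,b_N\in(0,1)$ and then solve, for real $a_j$, the homogeneous conditions
\[
\hat\Ff(\theta+k)=\hat\Hh(\theta+k)\,g(\theta+k)=0,\qquad \theta+k\in(-\tfrac N2,\tfrac N2).
\]
As $\hat\Hh>0$, this is just $g(\nu)=0$ at the $N-1$ grid points $\nu\in(\theta+\Z)\cap(-N/2,N/2)$. Because the $a_j$ are real, $g(-\nu)=\overline{g(\nu)}$, so these conditions occur in conjugate pairs; the genuinely independent constraints reduce to $N-1$ real linear equations in the $N$ real unknowns $a_j$ (for $\theta=0$ the point $\nu=0$ gives the single real equation $\sum_j a_j=0$). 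Hence a nontrivial real solution exists. For generic $b_j$ the solution space is one-dimensional, and one arranges in addition that all $a_j\neq0$ (so that $q(\Ff)=2N$) and that $g(N/2)\neq0$.

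With the low frequencies killed, the surviving frequencies of least modulus are exactly $\pm N/2$ (these are half-integers precisely when $N$ is odd, which is why $\theta$ is chosen by parity), so that
\[
f(x)=2\,\re\!\left(\hat\Ff(N/2)\,e^{\pi i N x}\right)+r(x),\qquad r(x)=\sum_{|\nu|>N/2}\hat\Ff(\nu)\,e^{2\pi i\nu x}.
\]
The rapid decay of $\hat\Hh$ gives $\|r\|_\infty\le \|g\|_\infty\sum_{|\nu|>N/2}\hat\Hh(\nu)\ll |\hat\Ff(N/2)|$, hence $\|r\|_\infty<2|\hat\Ff(N/2)|$. Therefore $f$ inherits the sign of the cosine $2|\hat\Ff(N/2)|\cos(\pi N x+\arg\hat\Ff(N/2))$ at the $N$ points of $[0,1)$ where the cosine attains $\pm$ its maximum, producing at least $N$ sign changes and hence at least $N$ zeros per period. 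Since $f$ is real-analytic and $\not\equiv0$, its real zero set $Z$ is $1$-periodic and separated, with at least $N$ points per period; selecting $N$ of them from each period gives a separated $\Lambda\subset Z$ with $D^-(\Lambda)=D^+(\Lambda)=N$ and $f|_\Lambda=0$, so $\Lambda$ is not a uniqueness set for $V^\infty_\Z(\Ff)$.

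Finally, stability of the $\Z$-shifts is handled by Lemma \ref{integershifts}: it is equivalent to $\hat\Ff=\hat\Hh\,g$ not vanishing on any coset $\beta+\Z$, and since $\hat\Hh>0$ it suffices that $g$ not vanish identically on any such coset. But $l\mapsto g(\beta+l)=\sum_j \big(a_j e^{-2\pi i b_j\beta}\big)\big(e^{-2\pi i b_j}\big)^l$ is a nonzero exponential sequence whenever the bases $e^{-2\pi i b_j}$ are distinct, i.e. whenever the $b_j$ are distinct modulo $1$ — which holds for $b_j\in(0,1)$. The main obstacle is the interplay in the middle step: one must guarantee \emph{simultaneously} that the $N-1$ real constraints are independent, that the resulting real solution has $g(N/2)$ large enough relative to $\|g\|_\infty$ to force the domination $\|r\|_\infty<2|\hat\Ff(N/2)|$, and that all $a_j\neq0$. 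Each is an open, generic condition on $(b_1,\dots,b_N)\in(0,1)^N$, and checking that they can be met at once — via a Vandermonde/perturbation argument — is the technical core of the proof.
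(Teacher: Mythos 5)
Your route is genuinely different from the paper's: you work on the Fourier side, periodizing with $c_n=e^{2\pi i\theta n}$ so that $f=\sum_k\hat\Ff(\theta+k)e^{2\pi i(\theta+k)x}$, killing the $N-1$ low frequencies of $\hat\Ff=\hat\Hh\,g$, and letting the surviving frequencies $\pm N/2$ force $N$ sign changes per period. The skeleton is sound — the count of $N-1$ real homogeneous conditions on $N$ real unknowns, the parity-dependent choice of $\theta$, and the stability argument via Lemma~\ref{integershifts} together with the non-vanishing of the exponential sequence $l\mapsto g(\beta+l)$ are all correct. But there is a genuine gap precisely where you flag one: the domination $\|r\|_\infty<2|\hat\Ff(N/2)|$ requires the quantitative bound $|g(N/2)|\ge c\,\|g\|_\infty$ with $c$ of order $e^{-\pi^2}$, and this is \emph{not} a consequence of genericity. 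Genericity gives $g(N/2)\ne0$ on an open dense set of parameters, but says nothing about a lower bound for the scale-invariant ratio $|g(N/2)|/\|g\|_\infty$ evaluated on the (essentially unique) kernel vector of the linear system; a priori that ratio could be uniformly small. Until a concrete $(b_1,\dots,b_N)$ is exhibited where the bound holds, the central inequality of your argument is unproved.

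The gap is closable, and in fact explicitly. With $w_j=e^{-2\pi i b_j}$ the conditions read $\sum_j\tilde a_j w_j^k=0$ for $N-1$ consecutive integers $k$, a Vandermonde system whose kernel is spanned by $\tilde a_j w_j^{k_0}=c\big/\prod_{l\ne j}(w_j-w_l)$; the Lagrange identities then give $g(N/2)=c$ exactly, while $\sum_j|a_j|=|c|\sum_j\prod_{l\ne j}|w_j-w_l|^{-1}$. Taking the $w_j$ to be the $N$-th roots of unity (i.e.\ $b_j=j/N$) makes each product equal to $N$, so $\|g\|_\infty\le\sum_j|a_j|=|c|=|g(N/2)|$, far more than needed; one also checks that $c$ can be rotated so that all $a_j$ are real and nonzero. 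It is worth contrasting this with the paper's proof, which avoids estimates entirely: it periodizes (or, for odd $N$, antiperiodizes) each translate $\Hh(\cdot-b_j)$ separately to obtain $N$ linearly independent $1$- or $2$-periodic functions $\varphi_j$, solves the interpolation problem $\sum_j a_j\varphi_j(x_l)=(-1)^l$ at $N$ points to force $N-1$ sign changes, and then uses (anti)periodicity and the evenness of the number of sign changes per period to upgrade to $N$ zeros per period. That soft argument sidesteps the domination issue altogether, whereas your approach, once the Vandermonde computation is supplied, reaches the same conclusion with an explicit function and explicit constants.
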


Recall that a set $\Lambda$ is not a uniqueness set for $V^{\infty}_{\Z}(\Ff)$ if there is a non-trivial function 
$f\in V^{\infty}_{\Z}(\Ff)$ which vanishes on $\La$. Then, clearly, $\La$ is not a sampling set for $V^{\infty}_{\Z}(\Ff).$

For the generators $\Ff$ from the class $\kk(\alpha)$ we consider the integer shifts only. Our main sampling theorem for the shift-invariant space $V^2_{\Z}(\Ff)$ is as follows.
\begin{theorem}\label{sampling_thm2}
\begin{enumerate}
    \item[\rm{(i)}]  Assume a generator  $\Ff \in \kk(\alpha)$ has stable  $\Z$-shifts. If a separated set $\Lambda\subset\R$ satisfies
    \begin{equation}\label{e010}D^-(\Lambda)>q(\Ff)+1,\end{equation} then $\Lambda$ is a sampling set for $V^p_\Z(\Ff)$. 
    \item[\rm{(ii)}] For every $N\in\N$ there exist a generator $\Ff \in \kk(\alpha)$ with stable $\Z$-shifts and $q(\Ff)=N,$
    and a separated set $\Lambda$ satisfying $D^{-}(\Lambda) =q(\Ff)+1$, such that  $\Lambda$ is not a uniqueness set for $V^{\infty}_{\Z}(\Ff).$ 
\end{enumerate}
      \end{theorem}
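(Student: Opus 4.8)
The plan is to handle both parts of Theorem~\ref{sampling_thm2} through a single complex-analytic device. To a function $f=\sum_{n}c_n\Ff(\,\cdot-n)\in V^p_\Z(\Ff)$ I associate
$$g(z):=e^{\alpha z^2/2}f(z)=\sum_{n\in\Z}c_n e^{-\alpha n^2/2}e^{\alpha nz}\Rr(e^{\alpha(z-n)}).$$
A direct computation in the variable $u=e^{\alpha z}$ shows that $g$ is $2\pi i/\alpha$-periodic, hence descends to a single-valued meromorphic function of $u$ on $\Cc\setminus\{0\}$ whose only poles sit at the points $u=w_je^{\alpha n}$ (of order at most $d(w_j)$), where $w_j$ runs over the poles of $\Rr$. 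The Gaussian decay of the coefficients $c_ne^{-\alpha n^2/2}$ (with $\{c_n\}$ bounded) makes $g$ a \emph{theta-type} function, i.e.\ its Laurent coefficients $a_n$ satisfy $\log|a_n|=-\tfrac{\alpha}{2}n^2+O(n)$. Since $e^{\alpha z^2/2}$ never vanishes, the zeros of $g$ coincide with those of $f$, and the real zeros of $f$ in $[-R,R]$ lie inside the annulus $A_R=\{e^{-\alpha R}\le|u|\le e^{\alpha R}\}$.

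For part~(i) I would first reduce the sampling inequality to a uniqueness statement by Beurling's weak-limit machinery: the upper inequality holds automatically because $\Ff\in W_0$ (Lemma~\ref{LemmaA1}), while for the lower inequality a standard normal-families contradiction argument, combined with \eqref{eex} and the density preservation \eqref{upperlow}, $D^-(\Lambda')\ge D^-(\Lambda)$, reduces everything to proving: \emph{if $\Lambda'$ is separated with $D^-(\Lambda')>q(\Ff)+1$ and $f\in V^\infty_\Z(\Ff)$ vanishes on $\Lambda'$, then $f\equiv0$} (here one uses that every $\Gamma'\in W(\Z)$ is a translate $\Z+a$ and that $V^\infty_{\Z+a}(\Ff)$ is the corresponding translate of $V^\infty_\Z(\Ff)$). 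I would establish this uniqueness statement by the argument principle on $A_R$. The number of poles of $g$ in $A_R$ is at most $2q(\Ff)R+O(1)$, since each family $\{w_je^{\alpha n}\}$ meets $A_R$ in $2R+O(1)$ points and $\sum_j d(w_j)=q(\Ff)$. The boundary term is governed by the winding estimate $\tfrac1{2\pi}\Delta_{|u|=r}\arg g=\tfrac{\log r}{\alpha}+O(1)$, valid for theta-type $g$, whose net contribution is $\tfrac1\alpha(\log e^{\alpha R}-\log e^{-\alpha R})+O(1)=2R+O(1)$. Hence the number of zeros of $g$ in $A_R$ is at most $2(q(\Ff)+1)R+O(1)$, so $f$ has at most $2(q(\Ff)+1)R+O(1)$ real zeros in $[-R,R]$. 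Vanishing on $\Lambda'$ would force $\#(\Lambda'\cap[-R,R])\le 2(q(\Ff)+1)R+O(1)$, contradicting $D^-(\Lambda')>q(\Ff)+1$ for large $R$; therefore $f\equiv0$.

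For part~(ii) I would build the extremal example by \emph{prescribing} $g$ as a ratio of Jacobi theta functions and then reading off the generator and the coefficients. Write $\vartheta(u)=\sum_n(-1)^ne^{-\alpha n^2/2}u^n$, whose triple-product zeros $u=e^{-\alpha(2k-1)/2}$ are positive real, and $\vartheta^*(u)=\sum_ne^{-\alpha n^2/2}u^n$, whose zeros $u=-e^{-\alpha(2k-1)/2}$ are negative real. For positive parameters $a_1,\dots,a_{N+1}$ and $c_1,\dots,c_N$ subject to $\prod_i a_i=\prod_j c_j$, set
$$g(u)=\frac{\prod_{i=1}^{N+1}\vartheta(u/a_i)}{\prod_{j=1}^{N}\vartheta^*(u/c_j)}.$$
The constraint forces the quasi-periodicity factors of numerator and denominator to match those generated by a bounded coefficient sequence $c_n=(\pm1)^n$, so $g$ is again theta-type; its poles lie at the points $u=w_je^{\alpha n}$ with $w_j=-c_je^{\alpha/2}\notin[0,\infty)$, all simple, so that $\Ff(z)=e^{-\alpha z^2/2}/\prod_j(e^{\alpha z}-w_j)\in\kk(\alpha)$ with $q(\Ff)=N$. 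Choosing the $c_j$ with $\log(c_j/c_{j'})\notin\alpha\Z$ secures condition $(\Xi''')$, whence the $\Z$-shifts of $\Ff$ are stable by Proposition~\ref{pstab}(i). The zeros of $g$ form the $N+1$ real progressions $z=\alpha^{-1}\log a_i-(2k-1)/2$, so for a generic choice of the $a_i$ keeping them separated, $f=e^{-\alpha z^2/2}g$ is a nonzero element of $V^\infty_\Z(\Ff)$ vanishing on a separated set $\Lambda$ with $D^-(\Lambda)=N+1=q(\Ff)+1$.

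The two genuinely technical points are these. In~(i) the crux is the winding estimate $\tfrac1{2\pi}\Delta_{|u|=r}\arg g=\tfrac{\log r}{\alpha}+O(1)$: one must show that the Laurent sum of $g$ is sharply peaked at the index $n^*(r)\approx\alpha^{-1}\log r$ and that the argument increment contributed by the off-peak terms stays bounded uniformly in $r$ (chosen to avoid the moduli of the poles); this is where the precise Gaussian coefficient asymptotics and a Laplace/saddle-point analysis enter, and it is the step I expect to be the main obstacle. In~(ii) the main difficulty is to verify that the prescribed theta ratio really belongs to $e^{\alpha z^2/2}V^\infty_\Z(\Ff)$ — that expanding $g$ as $\sum_n c_n e^{-\alpha n^2/2}u^n\Rr(ue^{-\alpha n})$ produces a \emph{bounded} sequence $\{c_n\}$ and a bounded $f$ on $\R$; this follows from matching poles, principal parts and quasi-periodicity (so that the difference with the series is a theta function forced to vanish), together with the asymptotics $\log|g(e^{\alpha x})|=\tfrac{\alpha}{2}x^2+O(\log(1+|x|))$, which yield $|f|\le C$ on $\R$.
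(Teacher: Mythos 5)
Your part (i) hinges on the winding estimate $\tfrac1{2\pi}\Delta_{|u|=r}\arg g=\alpha^{-1}\log r+O(1)$, which you correctly flag as the crux --- but this estimate is \emph{false} for general bounded coefficient sequences, so the step cannot be repaired by a sharper saddle-point analysis. The simplest counterexample is $f=\Ff$ itself (i.e.\ $c_n=\delta_{n,0}$): then $g(z)=\Rr(e^{\alpha z})$, so the descended function is the rational function $\Rr(u)$, whose winding on $|u|=r$ is eventually the constant $\deg P-\deg Q$, not $\alpha^{-1}\log r+O(1)$, and whose Laurent coefficients are not of theta type. A lacunary choice such as $c_n=\delta_{n,0}+\delta_{n,K}$ (pure Gaussian case) gives winding $0$ for $r<e^{\alpha K/2}$ and $K$ for $r>e^{\alpha K/2}$, so no two-sided $O(1)$ bound holds on any sequence of radii. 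Restricting attention to the functions arising in the contradiction hypothesis does not help: by the argument principle the difference of the windings on the two boundary circles \emph{is} the quantity $n_0-n_\infty$ you are trying to bound, so any independent proof of the winding estimate is circular. The paper circumvents exactly this obstruction by applying Jensen's formula in the $z$-plane to $f$ itself (translated so that $f(0)\neq0,\infty$): Jensen requires only an \emph{upper} bound $\log|f(x+iy)|\le y^2/2+C\log R$ on a sequence of circles avoiding the poles, plus the single normalization $-\log|f(0)|$, and no two-sided control of the boundary argument increment; the zero/pole bookkeeping is then done by splitting $\La=\La_1\cup\La_2$ with $D^-(\La_1)>q(\Ff)$ (to dominate the poles) and $D^-(\La_2)>1$ (to beat the $e^{y^2/2}$ growth). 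Your reduction of sampling to uniqueness via weak limits and Lemma~\ref{interpol_sampl_lemma_A} is fine and matches the paper.

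Part (ii) is a genuinely different route: you prescribe $g$ as an explicit ratio of Jacobi theta functions, whereas the paper (Proposition~\ref{prop_uG}) solves a finite linear system for the coefficients and counts sign changes of an alternating $2$-periodic sum. Your construction is plausible and has the advantage of exhibiting the zero set explicitly, but two points need attention: (a) the space of entire functions on $\Cc\setminus\{0\}$ with the quasi-period factor $\pm e^{\alpha/2}u$ is one-dimensional, so after matching principal parts the difference between $g$ and the series $\sum_nc_ne^{-\alpha n^2/2}u^n\Rr(ue^{-\alpha n})$ is in general a \emph{nonzero} multiple of a theta function --- it is not ``forced to vanish''; you must either include a constant term $a_0$ in $\Rr$ (as the paper does in \eqref{fgene}) to absorb it, or impose an additional condition on the parameters; (b) matching the residues at all poles $w_je^{\alpha n}$ with a single sequence $\{c_n\}$ is a consistency condition across the index $j$ that has to be verified from the quasi-periodicity of $g$, not merely asserted.
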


\subsection{Interpolating sets for quasi shift-invariant spaces}

If for every $\cc=\{c_\gamma\} \in l^p(\Gamma)$ there is a function $f \in V_\Gamma^p(\Ff)$  such that $f(\lambda) = c_\lambda, \lambda\in\La$, then $\La$ is called  a set of interpolation  for $V_\Gamma^p(\Ff)$.

The duality between interpolation and sampling is well-known, see e.g. the discussion in~\cite{grs}. The following corollary follows from Theorem \ref{sampling_thm1}:

\begin{corollary}\label{interpol_cor}
Assume that $\Lambda,\Gamma$ and $\Ff$  satisfy assumptions of Theorem~\ref{sampling_thm1}.
Then $\Gamma$ is an interpolation set for $V^p_{\Lambda}(\Ff)$, for every $1\leq p\leq\infty.$
\end{corollary}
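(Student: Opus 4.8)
The plan is to derive Corollary~\ref{interpol_cor} from Theorem~\ref{sampling_thm1} by exploiting the duality between sampling and interpolation. The key observation is that the roles of $\Lambda$ and $\Gamma$ are interchanged: we want to show that $\Gamma$ is an interpolation set for $V^p_\Lambda(\Ff)$, and we are given that $\Lambda$ is a sampling set for $V^p_\Gamma(\Ff)$. Since the hypotheses of Theorem~\ref{sampling_thm1} hold, we know in particular that $D^-(\Lambda)>\tfrac{q(\Ff)}{k(\Ff)}D^+(\Gamma)$ and that $\Gamma$-shifts of $\Ff$ are stable. To begin, I would set up the pre-Gramian (or synthesis/sampling) operator $T\colon l^p(\Gamma)\to l^p(\Lambda)$ associated with the pair $(\Lambda,\Gamma)$, defined by $(T\cc)_\lambda = \sum_{\gamma\in\Gamma} c_\gamma \Ff(\lambda-\gamma)$, i.e.\ the operator that samples the function $f=\sum_\gamma c_\gamma\Ff(\cdot-\gamma)$ on $\Lambda$.

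The central step is to reinterpret the conclusion of Theorem~\ref{sampling_thm1} operator-theoretically. The sampling inequalities, combined with the stability of $\Gamma$-shifts, say precisely that $T$ is bounded below and bounded above as a map $l^p(\Gamma)\to l^p(\Lambda)$; hence $T$ is injective with closed range. Interpolation of data on $\Gamma$ by functions in $V^p_\Lambda(\Ff)$ is governed by the transpose operator: a function $g=\sum_{\lambda\in\Lambda} d_\lambda\Ff(\cdot-\lambda)\in V^p_\Lambda(\Ff)$ sampled on $\Gamma$ gives values $g(\gamma)=\sum_\lambda d_\lambda\Ff(\gamma-\lambda)$, and the matrix governing this is (up to the reflection $\Ff(\gamma-\lambda)$ versus $\Ff(\lambda-\gamma)$) the transpose of the pre-Gramian for $(\Lambda,\Gamma)$. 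The plan is therefore to show that surjectivity of the interpolation operator $S\colon l^{p}(\Lambda)\to l^{p}(\Gamma)$, $(S\dd)_\gamma=g(\gamma)$, follows from the fact that its transpose (essentially $T$ up to reflection) is bounded below. Concretely, a bounded operator on the relevant sequence spaces is surjective if and only if its transpose is bounded below on the dual space; this is the standard closed-range/open-mapping duality, and one must be careful to run it in the correct dual pairing, handling the endpoint cases $p=1$ and $p=\infty$ by the usual $l^1$--$l^\infty$ duality and, where $p=\infty$ is not reflexive, by passing through a weak-$*$ or predual argument.

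To make the reflection harmless, I would note that $\Ff$ can be taken (or replaced by) a generator whose relevant symmetry is captured by the class structure, or more simply observe that the density hypothesis \eqref{d_pm_cond} and the stability hypothesis are symmetric enough that applying Theorem~\ref{sampling_thm1} to the reflected generator $\tilde\Ff(x)=\Ff(-x)$ and the pair $(\Lambda,\Gamma)$ yields exactly the boundedness-below statement needed for the transpose of $S$. Since $\tilde\Ff$ lies in the same class with the same values of $q$ and $k$, and reflection preserves Beurling densities and stability, the hypotheses transfer verbatim. Thus the sampling inequality for $\tilde\Ff$ gives that the transpose of $S$ is bounded below, whence $S$ is surjective, which is exactly the assertion that $\Gamma$ is an interpolation set for $V^p_\Lambda(\Ff)$.

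The main obstacle I expect is the functional-analytic bookkeeping at the endpoints, especially $p=\infty$. For $1<p<\infty$ the spaces are reflexive and the duality argument (surjective $\iff$ transpose bounded below) is textbook. For $p=1$ and $p=\infty$ one loses reflexivity, so I would either invoke the predual $c_0$ in place of $l^\infty$, or argue directly that the closed range of the bounded-below transpose operator, together with injectivity, forces the original operator to map onto the whole target sequence space; this may require verifying that the range of $S$ is weak-$*$ closed and then that it is all of $l^\infty(\Gamma)$. A secondary technical point is to confirm that the interpolating function $g$ produced by surjectivity genuinely lies in $V^p_\Lambda(\Ff)$ with $l^p$ coefficients — but this is immediate from the stability \eqref{stab} of $\Lambda$-shifts, which holds under the same hypotheses. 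Once the duality is set up cleanly, the corollary follows with essentially no new analytic input beyond Theorem~\ref{sampling_thm1}.
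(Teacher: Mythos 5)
Your proposal is correct and follows essentially the same route as the paper: the paper likewise realizes interpolation on $\Gamma$ as surjectivity of the adjoint of the sampling operator $A\colon l^p(\Gamma)\to l^{p}(\Lambda)$, $(A\cc)_\lambda=\sum_{\gamma}c_\gamma\Ff(\lambda-\gamma)$, and invokes the standard Banach-space equivalence ``bounded below $\iff$ adjoint onto.'' Your extra attention to the reflection $\Ff(\gamma-\lambda)$ versus $\Ff(\lambda-\gamma)$ and to the non-reflexive endpoints $p=1,\infty$ is warranted (the paper passes over both points silently), and your fix via the reflected generator $\tilde\Ff(x)=\Ff(-x)$, which stays in the same class with the same $q$ and $k$, is sound.
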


See the proof in Section \ref{s6}.
\subsection{Gabor frames}
Our results for the Gabor frames follow from the sampling theorems formulated above. 
We use the connection between Gabor frames and sampling theorems for shift-invariant spaces that previously turned out to be very fruitful, see e.g. \cite{grs, MR3053565}, and \cite{MR4782146} for a multi-dimensional setting.

Fix the standard notation for the operators of translation and modulation:
$$
M_{\xi} f = e^{2 \pi i \xi\, \cdot}f \quad \text{and} \quad T_{\lambda}f = f(\,\cdot - \lambda),\quad \text{where } f \in L^2(\R), \, (\lambda,\xi) \in \R^2.
$$

Let $\Lambda, \Psi$ be  separated real sets. For a generator $\mathcal{G}$ the {\it Gabor system } $G(\mathcal{G}, \Lambda\times \Psi)$ is the collection of all time-frequency shifts
\begin{equation}
    G(\Ff, \Lambda\times \Psi): = \{M_{\xi} T_{\lambda} \Ff,\,\, (\lambda,\xi) \in \La \times\Psi \}. 
\end{equation}

The system $G(\Ff, \Lambda\times \Psi)$ forms a {\it frame} in $L^2(\R)$ if there exist finite positive constants $A,B$ such that
$$
A\|f\|^2_{2} \leq \sum\limits_{(\lambda, \xi) \in \Lambda \times \Psi}|\langle f, M_{\xi}T_{\lambda} \mathcal{G} \rangle|^2 \leq B\|f\|^2_{2},
$$
for every $f \in L^2(\R).$

For the Gabor systems with generators from the families $\Kk(\alpha)$ and $\kk(\alpha)$,
we study the case of semi-regular lattices, i.e. $\Psi = \Z$. 
Our main result is as follows

\begin{theorem}\label{GF_cor}
 \begin{enumerate}
    \item[(i)] Assume   the Fourier transform $\hat\Ff$ of a generator $\Ff\in \Kk(\alpha),\alpha>0,$ satisfies  \eqref{vanish}.
    If a separated set $\La\subset\R$ satisfies   $D^{-}(\Lambda) > q(\Ff)/k(\Ff)$, 
   then   the system $G(\Ff, \Lambda \times  \Z)$ is a frame in $L^2(\R)$. 
  \item[(ii)] 
 There exist a  generator $\Ff\in \Kk(\alpha),\alpha>0,$ satisfying \eqref{vanish} and a separated set $\Lambda$ with the critical density $D^{-}(\Lambda)= q(\Ff)/k(\Ff)$ such that $G(\Ff, \Lambda \times  \Z)$ is not a frame in $L^2(\R)$. 
    \item[(iii)] Assume  a generator  $\Ff\in \kk(\alpha),\alpha>0,$  satisfies \eqref{vanish}.  If a separated set $\La$ satisfies     $ D^{-}(\Lambda) > q(\Ff)+1$, then   the system $G(\Ff, \Lambda \times \Z)$ is a frame in $L^2(\R)$. 

     \item[(iv)] 
      There exist a  generator $\Ff\in \kk(\alpha),\alpha>0,$ satisfying \eqref{vanish} and a separated set $\Lambda$ with the critical density $D^{-}(\Lambda)= q(\Ff)+1$ such that $G(\Ff, \Lambda \times  \Z)$ is not a frame in $L^2(\R)$. 
\end{enumerate}    
\end{theorem}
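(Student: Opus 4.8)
The plan is to derive Theorem~\ref{GF_cor} from the sampling theorems \ref{sampling_thm1} and \ref{sampling_thm2} via the standard duality between Gabor frames with integer modulation set and sampling in shift-invariant spaces. The key bridge is the following well-known observation: for $\Psi=\Z$, the frame property of $G(\Ff,\Lambda\times\Z)$ in $L^2(\R)$ is equivalent to $\Lambda$ being a (stable) sampling set for the shift-invariant space $V^2_\Z(\check\Ff)$ generated by the inverse Fourier transform $\check\Ff$, where the Zak transform (or a periodization argument) converts the modulations along $\Z$ into the periodicity structure of the shift-invariant space. Concretely, $\sum_{\xi\in\Z}|\langle f,M_\xi T_\lambda\Ff\rangle|^2$ is, after applying the Poisson summation formula to the inner product, the square modulus of a sample at $\lambda$ of a function in the shift-invariant space generated by $\check\Ff$; summing over $\lambda\in\Lambda$ then yields exactly the sampling sum. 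The first step is therefore to make this equivalence precise, citing the framework of \cite{grs,MR3053565}, and to record that $\hat\Ff$ satisfying \eqref{vanish} is precisely the condition that the $\Z$-shifts of $\check\Ff$ are $l^2$-stable (Lemma~\ref{integershifts}), so that $V^2_\Z(\check\Ff)$ is a genuine closed subspace.

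For part (i), I would check that if $\Ff\in\Kk(\alpha)$ then its (inverse) Fourier transform again lies in $\Kk$ (or a closely related class with the same invariants $q$ and $k$), so that the sampling threshold $q(\Ff)/k(\Ff)$ transfers directly. The crucial computation is that the Fourier transform of a generator of the form $\mathcal{R}(e^{\alpha z})$ is again of this form after a change of the parameter $\alpha\mapsto$ something proportional to $1/\alpha$; this follows because such generators are ratios of exponential polynomials whose Fourier transforms can be computed by residues, and the residue structure reproduces a rational function of $e^{\cdot}$. The periodicity invariant $k(\Ff)$ and the degree invariant $q(\Ff)$ are preserved under this operation (up to the obvious symmetry), so Theorem~\ref{sampling_thm1} with $\Gamma=\Z$ (for which $D^+(\Gamma)=1$) applied to $\check\Ff$ gives: if $D^-(\Lambda)>q(\Ff)/k(\Ff)$, then $\Lambda$ samples $V^2_\Z(\check\Ff)$, hence $G(\Ff,\Lambda\times\Z)$ is a frame. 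Part (iii) is entirely analogous, using the fact that the Gaussian factor $e^{-\alpha z^2/2}$ is (up to scaling) fixed by the Fourier transform, so $\check\Ff\in\kk(\alpha')$ with the same $q(\Ff)$; then Theorem~\ref{sampling_thm2}(i) with threshold $q(\Ff)+1$ applies.

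For the sharpness parts (ii) and (iv), I would run the duality in reverse: the examples constructed in Theorems~\ref{sharp_thm} and \ref{sampling_thm2}(ii) exhibit, at the critical density, a separated set $\Lambda$ that fails to be a \emph{uniqueness} set for $V^\infty_\Z$ of the relevant generator. I would transport those examples through the Fourier/Zak correspondence: the generator realizing the failure of sampling becomes, via inverse Fourier transform, a generator in the appropriate class, and the nontrivial function vanishing on $\Lambda$ produces a nonzero $f\in L^2(\R)$ orthogonal to all $M_\xi T_\lambda\Ff$, i.e. a witness that the lower frame bound fails at $D^-(\Lambda)=q(\Ff)/k(\Ff)$ (resp.\ $q(\Ff)+1$). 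Some care is needed to pass from the $L^\infty$ uniqueness failure to an $L^2$ frame failure; this is handled by a standard weak-limit/compactness argument, replacing $\Lambda$ by an element of $W(\Lambda)$ if necessary and using that frame bounds are stable under weak limits, so that a failure of the uniqueness property at the critical density forces the lower frame bound to vanish. The main obstacle I anticipate is the first step: verifying cleanly that the Fourier transform maps $\Kk(\alpha)$ and $\kk(\alpha)$ back into the same families with the invariants $q$ and $k$ preserved, since this is what makes the density thresholds match on both sides of the duality; the sharpness direction then follows more mechanically once the correspondence is established.
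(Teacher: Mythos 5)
Your overall strategy (reduce to the sampling theorems via the equivalence between semi-regular Gabor frames and sampling in a shift-invariant space) is the right one and is what the paper does, but the bridge you propose is misstated, and the misstatement creates the very obstacle you flag as the ``main'' one. The correct equivalence --- the paper's Lemma~\ref{GF_sampling_lemma}, a special case of \cite[Theorem~2.3]{grs} --- is that $G(\Ff,-\Lambda\times\Z)$ is a frame if and only if $\Lambda+x$ is a sampling set for $V^2_{\Z}(\Ff)$ for every $x\in[0,1)$: the \emph{same} generator $\Ff$ appears on both sides. Indeed, periodizing $\sum_{\xi\in\Z}|\langle f,M_\xi T_\lambda\Ff\rangle|^2$ via Parseval gives $\int_0^1\bigl|\sum_{n}\overline{f(x+n)}\,\Ff(x-\lambda+n)\bigr|^2dx$, i.e.\ samples at the points $x-\lambda$ of functions in $V^2_\Z(\Ff)$, not of $V^2_\Z(\check\Ff)$. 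Consequently, the step you anticipate as crucial --- that the (inverse) Fourier transform maps $\Kk(\alpha)$ and $\kk(\alpha)$ back into themselves with $q$ and $k$ preserved --- is not needed, and for $\Kk(\alpha)$ it is in fact false in general: computing $\hat\Ff$ by residues over a period strip yields expressions built from $e^{-2\pi i z_j t}$ with $z_j=\log(w_j)/\alpha$ complex and generally incommensurable (with polynomial factors in $t$ for higher-order poles), which is not of the form $\mathcal{R}'(e^{\alpha' t})$ except in special cases such as the hyperbolic secant. So parts (i) and (iii) cannot be completed along your route. The only role of hypothesis \eqref{vanish} is, via Lemma~\ref{integershifts}, to guarantee the stability of the $\Z$-shifts of $\Ff$, which is exactly the hypothesis needed to invoke Theorems~\ref{sampling_thm1} (with $\Gamma=\Z$, $D^+(\Z)=1$) and~\ref{sampling_thm2}; the density hypotheses transfer because $D^-(-\Lambda+x)=D^-(\Lambda)$.

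For the sharpness parts your plan also overshoots: producing a nonzero $f\in L^2(\R)$ orthogonal to all $M_\xi T_\lambda\Ff$ would show the Gabor system is incomplete, which is stronger than (and not what follows from) the constructions. What Theorem~\ref{sharp_thm} and Proposition~\ref{prop_uG} give is a nontrivial element of $V^\infty_\Z(\Ff)$ vanishing on $\Lambda$ at the critical density; by Lemma~\ref{interpol_sampl_lemma_A} (transference of lower bounds across $\ell^p$), $\Lambda$ then fails to be a sampling set for $V^2_\Z(\Ff)$, and Lemma~\ref{GF_sampling_lemma} converts this directly into the failure of the frame property. No weak-limit argument or Fourier transport of the examples is required once the equivalence is stated with the correct generator.
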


\subsection{Structure of the paper}
The paper is organized as follows. In Section~\ref{section_prelim} we fix  notations and formulate several known results. In Section~\ref{stab_section} we study stability of $\Gamma$-shifts and $\Z$-shifts for the generators from $\Kk(\alpha)$ and $\kk(\alpha). $
The uniqueness sets for the corresponding shift-invariant and quasi shift-invariant spaces are studied in Section~\ref{uniq_section}. In Section~\ref{non_uniq_sec} we present examples of  functions that vanish on certain sets of critical density. 
Combining results of Section~\ref{uniq_section} and Section~\ref{non_uniq_sec} and using the classical technique due to Beurling, we prove the sampling and interpolation theorems (Theorems~\ref{sampling_thm1}, \ref{sharp_thm}, \ref{sampling_thm2}, and Corollary~\ref{interpol_cor}) in Section~\ref{s6}. The results for Gabor frames are proved in Section~\ref{sec_GF}. Finally, in Section~\ref{sec_app}  we prove 
Theorem~\ref{tstab}. 

\section{Preliminaries}\label{section_prelim}

Throughout the paper, by $C$ we always denote positive constants. 
The notation ${\bf 1}_S$ stands for the indicator function of the set $S$.

Given a meromorphic  function $f(z), z\in\Cc$, denote by
$$
{\rm Zer}(f) = \left\{z \in \Cc : f(z) = 0 \right\}
$$ and Pol$\,(f)$ the {\it multisets}  of zeros and poles of $f$, i.e. each element is counted with its multiplicity. 
The notations ${\rm zer}(f)$ and ${\rm pol}(f)$ will stand for the set of zeros and poles of  $f$, i.e. the elements of  these sets  are pairwise distinct.

The next lemma is a simplified version of  Proposition~A.1 in \cite{MR3336091}, see also  Lemma~7.5 in \cite{grs}.

\begin{lemma}\label{interpol_sampl_lemma_A}
  Let $\Lambda$ and $\Gamma$ be separated sets in $\R$. Let $A \in \Cc^{\Lambda \times \Gamma}$ be a matrix such that
  $$
  |A_{\lambda, \gamma}| \le \theta(\lambda - \gamma) \quad \lambda \in \Lambda, \gamma \in \Gamma \quad \text{for some  } \theta \in W_0.
  $$
  Assume that there exist a $p_0 \in [1,\infty]$ and $C_0 > 0,$ such that
$$\|A\cc\|_{p_0} \geq  C_0\|\cc\|_{p_0}\quad \mbox{for all } \cc\in l^{p_0}(\Gamma).$$ 
Then there exists a constant $C > 0$ independent of $q$ such that, for all $q \in [1,\infty]$
$$\|A\cc\|_q \geq C\|\cc\|_q,\quad \mbox{for all } \cc\in l^q(\Gamma).$$ 
\end{lemma}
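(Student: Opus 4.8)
The statement to prove is Lemma~\ref{interpol_sampl_lemma_A}: given a matrix $A$ with entries dominated by $\theta(\lambda-\gamma)$ for some $\theta\in W_0$, a lower bound $\|A\cc\|_{p_0}\geq C_0\|\cc\|_{p_0}$ for a single exponent $p_0$ upgrades to a lower bound for all $q\in[1,\infty]$.

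\textbf{Strategy.} The plan is to treat $A$ as a bounded operator between the sequence spaces $l^q(\Gamma)$ and $l^q(\Lambda)$ for the full range $q\in[1,\infty]$, and to deduce the uniform lower bound from the single-exponent hypothesis by a duality-plus-interpolation argument combined with a Schur-type norm estimate coming from $\theta\in W_0$. The key analytic input is that the off-diagonal decay condition $|A_{\lambda,\gamma}|\le\theta(\lambda-\gamma)$ with $\theta$ in the Wiener amalgam space, together with the separation of $\Lambda$ and $\Gamma$, forces $A$ to be bounded on every $l^q$ with a norm bounded independently of $q$.

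\textbf{Main steps.} First I would record the boundedness of $A$ on all $l^q$ simultaneously. Because $\Lambda,\Gamma$ are separated, in any unit interval there are at most $M$ points of each set for some fixed $M$, so the Schur test bounds
\[
\sup_{\lambda}\sum_{\gamma}|A_{\lambda,\gamma}|\le \sup_{\lambda}\sum_{\gamma}\theta(\lambda-\gamma)\le M\|\theta\|_W,
\]
and symmetrically for $\sup_\gamma\sum_\lambda$; hence $A:l^q\to l^q$ is bounded with $\|A\|_{q\to q}\le M\|\theta\|_W$ for every $q\in[1,\infty]$, uniformly in $q$. The same estimate applies to the adjoint $A^*$, whose entries obey the same type of bound with $\theta$ replaced by its reflection, which still lies in $W_0$.

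Next comes the heart of the matter: upgrading the one-sided lower bound from $p_0$ to all exponents. The lower bound $\|A\cc\|_{p_0}\ge C_0\|\cc\|_{p_0}$ says $A$ is bounded below on $l^{p_0}$; since $A$ is also bounded above, $A$ has closed range and a bounded left inverse on $l^{p_0}$. I would exploit the well-known equivalence between a lower bound for $A$ on $l^q$ and a surjectivity/solvability statement for $A^*$ on the dual exponent $q'$: the inequality $\|A\cc\|_{p_0}\ge C_0\|\cc\|_{p_0}$ is equivalent to surjectivity of $A^*$ from $l^{p_0'}(\Lambda)$ onto $l^{p_0'}(\Gamma)$. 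The cleanest route, following the cited Proposition~A.1 in \cite{MR3336091} and Lemma~7.5 in \cite{grs}, is to argue that the lower bound at a single $p_0$ propagates to neighboring exponents by a perturbation/Riesz--Thorin interpolation argument and then to the endpoints: one shows that the set of $q$ for which the lower bound holds (with some constant) is both open and closed in $[1,\infty]$, using the uniform-in-$q$ boundedness from the first step to control interpolation constants. Concretely, if the lower bound holds at $q_0$ it holds on an interval around $q_0$ because a bounded-below operator with uniformly bounded off-diagonal decay remains bounded below under small changes of exponent, and interpolation then fills in and reaches the endpoints $q=1,\infty$.

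\textbf{Expected obstacle.} The technical crux is making the "single exponent implies all exponents" step rigorous, since lower bounds (unlike upper bounds) do not interpolate directly. The workable mechanism is duality: translate the lower bound at $p_0$ into a statement about $A$ or $A^*$ being onto at the dual exponent, establish that surjectivity at one exponent propagates to an open neighborhood via a Neumann-series/approximate-inverse argument that uses the uniform Schur bounds, and combine this with the standard interpolation of the \emph{upper} bounds to pin down a genuine constant $C>0$ valid on the whole interval $[1,\infty]$. I expect that verifying the approximate left inverse constructed at $p_0$ remains a bounded operator on all $l^q$ — which again reduces to the $W_0$ off-diagonal decay being preserved under the relevant manipulations — will be the part requiring the most care.
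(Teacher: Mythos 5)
Your opening step (the Schur test giving $\|A\|_{q\to q}\le M\|\theta\|_W$ uniformly in $q$, and likewise for $A^*$) is correct and is indeed part of the standard argument. But the heart of your proposal --- propagating the lower bound from the single exponent $p_0$ to all $q\in[1,\infty]$ --- is not actually carried out; it is restated rather than proved. The ``open and closed in $[1,\infty]$'' scheme has no mechanism behind it: you assert that a bounded-below operator ``remains bounded below under small changes of exponent,'' but that openness claim is precisely the difficulty, and neither Riesz--Thorin (which, as you note, does not apply to lower bounds) nor a Neumann series (which would need $\|I-cA^*A\|_{q\to q}<1$ for the very exponents $q$ you are trying to reach) supplies it. The duality reformulation also runs into trouble at the endpoint $p_0=\infty$, where $(l^\infty)^*\ne l^1$, so ``lower bound at $p_0$ $\Leftrightarrow$ surjectivity of $A^*$ at $p_0'$'' is not available in the form you use it.

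The missing idea is spectral invariance (inverse-closedness) of the algebra of matrices with off-diagonal decay dominated by a Wiener-amalgam envelope. Note that the paper itself does not prove this lemma --- it is quoted as a simplified version of Proposition~A.1 in \cite{MR3336091} (see also Lemma~7.5 in \cite{grs}) --- and the proof given there proceeds as follows: from the lower bound at $p_0$ together with the upper bounds one obtains an invertible operator (e.g.\ $A^*A$ on $l^{p_0}$, or a suitable left inverse of $A$), and a Wiener-type lemma in the style of Sj\"ostrand/Baskakov shows that this inverse again has matrix entries dominated by an envelope in the amalgam class. A left inverse $B$ of $A$ with such off-diagonal decay is then bounded on \emph{every} $l^q$ by your own first step, and $\|\cc\|_q=\|BA\cc\|_q\le\|B\|_{q\to q}\|A\cc\|_q$ yields the conclusion with a constant independent of $q$ (cf.\ Remark~\ref{rr}). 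Without this inverse-closedness input your outline does not close; with it, the interpolation and connectedness apparatus you describe becomes unnecessary.
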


\begin{remark}[See Remark 8.2 in \cite{MR3336091}]\label{rr}
The constant $C$ in Lemma~{\rm\ref{interpol_sampl_lemma_A}}  depends only on the decay properties of the envelope $\theta$, the lower
bound for the given value of $p_0$, and on upper bounds for the relative separation of the
index sets.
 \end{remark}

To connect sampling
in shift-invariant spaces with Gabor frames,
we use the following lemma which is a particular case of a well-known result, see e.g. \cite[Theorem~2.3]{grs}. 

\begin{lemma}\label{GF_sampling_lemma}
Let $\Lambda \subset \R$ be a separated set,  and  $\Ff \in \kk(\alpha) \cup \Kk(\alpha)$. The following are equivalent:
    \begin{enumerate}
        \item[{\rm (i)}] The family $G(\Ff, -\Lambda\times \Z)$ is a frame for $L^2(\R)$.
        \item[{\rm (ii)}] For every $x \in [0,1)$ the set $\Lambda + x$ is a sampling set for $V^2_{\Z}(\Ff).$
    \end{enumerate}
\end{lemma}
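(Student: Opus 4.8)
The plan is to fiberize the Gabor frame inequality along the integer modulations and to recognize each fiber as a sampling functional on the shift-invariant space $V^2_\Z(\overline\Ff)$. Fix $f\in L^2(\R)$. For $\lambda\in\Lambda$ put $\Phi_\lambda(t):=\sum_{k\in\Z}f(t+k)\overline{\Ff(t+k+\lambda)}$, a $1$-periodic $L^1$ function whose $n$-th Fourier coefficient equals $\langle f,M_nT_{-\lambda}\Ff\rangle$. Parseval for Fourier series gives $\sum_{n\in\Z}|\langle f,M_nT_{-\lambda}\Ff\rangle|^2=\int_0^1|\Phi_\lambda(t)|^2\,dt$, and after summing in $\lambda$ and applying Fubini the whole Gabor sum becomes
\[
\sum_{(\lambda,n)\in\Lambda\times\Z}|\langle f,M_nT_{-\lambda}\Ff\rangle|^2=\int_0^1\sum_{\lambda\in\Lambda}|\psi_t(\lambda+t)|^2\,dt,\qquad \psi_t(y):=\sum_{m\in\Z}f(t-m)\overline{\Ff(y-m)}.
\]
For a.e. $t$ the sequence $c^{(t)}=\{f(t-m)\}_m$ lies in $l^2(\Z)$ and $\int_0^1\|c^{(t)}\|_{l^2}^2\,dt=\|f\|_2^2$, so $f\mapsto(c^{(t)})_{t}$ is a unitary map of $L^2(\R)$ onto $L^2([0,1);l^2(\Z))$ and each $\psi_t\in V^2_\Z(\overline\Ff)$ is continuous because $\overline\Ff\in W_0$.

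Next I would reformulate both conditions as statements about the fiber operators $R_tF:=\{F(\lambda+t)\}_{\lambda\in\Lambda}$ on $V^2_\Z(\overline\Ff)$. Since $\Ff\in W_0\cap C(\R)$ satisfies \eqref{vanish}, Lemma~\ref{integershifts} yields stable $\Z$-shifts for $\Ff$ and for $\overline\Ff$, whence $\|\psi_t\|_2^2\asymp\|c^{(t)}\|_{l^2}^2$ with constants independent of $t$, and therefore $\int_0^1\|\psi_t\|_2^2\,dt\asymp\|f\|_2^2$. Conjugation $F\mapsto\overline F$ shows that $\Lambda+x$ is sampling for $V^2_\Z(\Ff)$ iff it is sampling for $V^2_\Z(\overline\Ff)$. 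Hence (ii) says precisely that each $R_x$ is bounded below, while the displayed identity shows that (i) is equivalent to the family $\{R_t\}_{t\in[0,1)}$ being bounded below uniformly in $t$; the matching upper (Bessel) bounds are automatic for $\Ff\in W_0$ by Lemma~\ref{LemmaA1}. Thus the only gap between (i) and (ii) is the passage between pointwise and uniform lower bounds.

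The implication (i)$\Rightarrow$(ii) I would obtain by localization: a uniform lower bound for $\{R_t\}$ forces each $R_{x_0}$ to be bounded below, for otherwise a unit vector $F$ with $\|R_{x_0}F\|$ small, transported back through the unitary above as a fiber sequence concentrated near $t=x_0$, produces an $f$ that violates the frame lower bound. The reverse implication (ii)$\Rightarrow$(i) is the main obstacle, since it requires upgrading the pointwise lower bounds to a uniform one. Here I would run a Beurling-type compactness argument over the compact circle $[0,1)$: assuming no uniform bound, pick $x_n\in[0,1)$ and $F_n\in V^2_\Z(\overline\Ff)$ with $\|F_n\|_2=1$ and $\sum_\lambda|F_n(\lambda+x_n)|^2\to0$, recenter the mass by integer shifts, and use the envelope bound \eqref{g0} (resp. the Gaussian decay for $\kk(\alpha)$) together with the separation of $\Lambda$ to extract a locally uniform limit $F_*\neq0$ in $V^2_\Z(\overline\Ff)$ that vanishes on the weak limit of $\Lambda+x_n$.

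Because that weak limit is a translate of a sampling set $\Lambda+x_*$, this contradicts (ii). The delicate points are the normal-family compactness of $V^2_\Z(\overline\Ff)$ and the fact that the vanishing survives the weak limit, for which the analytic structure of the generators and the uniform decay estimates are essential; everything else is the bookkeeping of the fiberization above.
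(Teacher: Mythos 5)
The paper does not actually prove this lemma: it is quoted verbatim as a particular case of a known result (Theorem~2.3 in \cite{grs}), so any self-contained argument such as yours is by construction a different route. Your fiberization identity, the recognition of the fibers $\psi_t$ as elements of $V^2_\Z(\overline{\Ff})$, and the localization proof of (i)$\Rightarrow$(ii) reproduce the standard mechanism behind the cited theorem and are sound, granted the continuity of $t\mapsto\sum_{\lambda}|F(\lambda+t)|^2$ which you correctly extract from $\Ff\in W_0\cap C(\R)$. One point of hygiene: you invoke \eqref{vanish} (equivalently, stability of the $\Z$-shifts via Lemma~\ref{integershifts}), which is not a hypothesis of the lemma as stated. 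It is in fact needed for (ii)$\Rightarrow$(i) --- for the generator of Example~\ref{exg} and a sufficiently dense $\Lambda$, condition (ii) holds (because $V^2_\Z(H)\subset V^2_\Z(\Hh)$ and $\Lambda+x$ samples the larger space) while (i) fails, since $\hat H$ vanishes on $\Z$ --- and it is satisfied in every application the paper makes of the lemma; so this is a defect of the statement rather than of your argument, but it should be flagged rather than silently assumed.

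The genuine gap is in (ii)$\Rightarrow$(i). You take $\|F_n\|_2=1$ with $\sum_{\lambda}|F_n(\lambda+x_n)|^2\to0$ and propose to ``recenter the mass by integer shifts'' and extract a nonzero limit $F_*$. In $l^2$ this step fails: no recentering prevents the coefficient sequences from spreading out and converging weakly to zero (take $c^{(n)}_m=n^{-1/2}{\bf 1}_{[-n/2,n/2]}(m)$), so the extracted limit may be trivial and no contradiction with (ii) is obtained. The standard repair is to first transfer the pointwise sampling hypothesis from $p=2$ to $p=\infty$ using Lemma~\ref{interpol_sampl_lemma_A} together with Remark~\ref{rr} (to keep the lower bounds uniform), and to run the weak-limit argument in $l^\infty$, where recentering at a coefficient of modulus at least $\tfrac12$ does survive the limit. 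Even then a second issue remains: after shifting by integers $k_n\to\infty$, the set on which the limit function vanishes is a weak limit of \emph{integer translates} of $\Lambda+x_*$, i.e.\ an element of $W(\Lambda)+x_*$, not $\Lambda+x_*$ itself as you assert; closing the argument requires the Beurling-type equivalence between being a sampling set for $V^\infty_\Z$ and all weak limits of integer translates being uniqueness sets. Both repairs are available --- they are exactly the devices used in \cite{grs} and in Section~\ref{s6} of this paper --- but as written the step ``extract a locally uniform limit $F_*\neq0$'' does not go through.
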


\section{Stability of \texorpdfstring{$\Gamma$}{Gamma}-shifts}\label{stab_section}

In this section, we prove Proposition \ref{pstab} and present  examples of generators $\Ff$ from $\Kk(\alpha)$ and $\kk(\alpha)$ that do not have stable shifts.

Below we will need

\begin{lemma}\label{lstab}
  Given a generator $\Ff\in W_0$ and a separate set $\Gamma$. 
 If the left hand-side inequality in \eqref{stab} is not true, then 
  there is a set $\Gamma'\in W(\Gamma)$ and non-trivial coefficients  $\cc=\{c_{\gamma'}\}\in l^\infty(\Gamma')$ such that
  $$\sum_{\Gamma'} c_{\gamma'} \Ff(x-\gamma')\equiv 0.$$
\end{lemma}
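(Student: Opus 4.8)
The plan is to argue by a normal families / weak-limit compactness argument, which is the standard way to extract a nontrivial null combination from the failure of a lower bound. First I would negate the left-hand inequality in \eqref{stab}: since it fails for some $p$, and by the right-hand inequality (Lemma \ref{LemmaA1}) together with the reduction in Theorem \ref{tstab}, it is enough to work with $p=\infty$. So there exists a sequence of nontrivial coefficient vectors $\cc^{(n)}=\{c_\gamma^{(n)}\}\in l^\infty(\Gamma)$ with
\begin{equation}\label{failseq}
\left\|\sum_{\gamma\in\Gamma}c_\gamma^{(n)}\Ff(\,\cdot-\gamma)\right\|_\infty\to 0,\qquad \|\cc^{(n)}\|_\infty=1.
\end{equation}
By normalizing I may assume the supremum norm equals $1$; then for each $n$ pick $\gamma_n\in\Gamma$ with $|c_{\gamma_n}^{(n)}|$ close to $1$ (say $\geq 1/2$). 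The idea is to \emph{recenter} the whole configuration at $\gamma_n$: replace $\Gamma$ by $\Gamma-\gamma_n$ and the coefficients by the correspondingly shifted $l^\infty$ sequence, so that the distinguished index sits at the origin and carries a coefficient bounded below.

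Next I would pass to the weak limit. By the compactness property recorded in the excerpt (every sequence of translates $\Gamma+s_j$ has a weakly convergent subsequence), choose a subsequence along which $\Gamma-\gamma_n$ converges weakly to some $\Gamma'\in W(\Gamma)$. Simultaneously, using a diagonal argument, I would extract a subsequence along which the shifted coefficients converge coordinate-wise to limits $c_{\gamma'}$ for each $\gamma'\in\Gamma'$; these limits again lie in $l^\infty(\Gamma')$ with $\|\{c_{\gamma'}\}\|_\infty\le 1$, and the coefficient at the origin has modulus $\geq 1/2$, so the limit is nontrivial. This is the structural heart of the argument and relies on $\Gamma'$ being separated and relatively dense, as guaranteed by the discussion preceding \eqref{upperlow}.

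The main obstacle is controlling the limit of the sums \eqref{failseq}, i.e. showing that the shifted functions $\sum_{\gamma} c_\gamma^{(n)}\Ff(\,\cdot-(\gamma-\gamma_n))$ converge (uniformly on compacta) to $\sum_{\gamma'\in\Gamma'}c_{\gamma'}\Ff(\,\cdot-\gamma')$, so that the limit function is identically zero. Here is where $\Ff\in W_0$ is essential: the amalgam-norm bound \eqref{wiener} gives a uniform $l^\infty$-to-$L^\infty$ envelope estimate, so on any fixed compact interval only finitely many translates contribute significantly and the tails are uniformly small, uniformly in $n$. Combining the tail control with the coordinate-wise convergence of coefficients and the weak convergence of the index sets yields locally uniform convergence of the sums to the limit combination. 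Since the left-hand sides of \eqref{failseq} tend to $0$, the limit combination vanishes on every compact set, hence
\begin{equation}\label{nullcomb}
\sum_{\gamma'\in\Gamma'}c_{\gamma'}\Ff(x-\gamma')\equiv 0,
\end{equation}
with $\{c_{\gamma'}\}\in l^\infty(\Gamma')$ nontrivial, as required. The one delicate point to handle carefully is the matching of indices under weak convergence: for $\gamma'\in\Gamma'$ one must track which $\gamma\in\Gamma-\gamma_n$ approaches $\gamma'$ and assign the corresponding coefficient, which is exactly where the definition of weak convergence of sets (two-sided inclusion up to $\epsilon$) is used.
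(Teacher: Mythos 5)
Your proposal is correct and follows essentially the same route as the paper's proof: normalize the failing coefficient sequences at $p=\infty$, recenter the configuration at an index whose coefficient is bounded below by $1/2$, pass to a weak limit $\Gamma'\in W(\Gamma)$ of the translated sets together with a diagonal, coordinate-wise limit of the shifted coefficients, and use the $W_0$ envelope to control the tails and pass to the limit in the sum. The only cosmetic difference is that the paper takes the hypothesis directly at $p=\infty$ (where you reduce to that case via Lemma~\ref{LemmaA1} and Theorem~\ref{tstab}), and it leaves the locally uniform convergence step, which you spell out, as an easy check.
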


\begin{remark}\label{rrr}
Observe without proof that the converse statement is also true: If the last equality holds for some   $\Gamma'\in W(\Gamma)$ and non-trivial $\{c_{\gamma'}\}$, then the  left hand-side inequality in \eqref{stab} is not true.   
\end{remark}

Observe that somewhat similar results are known, see e.g.  Theorem 2.1 (c) in \cite{grs}.
\begin{proof}
The proof below uses the (by now) standard Beurling's technique based on passing to a weak limit of translates of the set $\Gamma$.

 Write $\Gamma=\{...<\gamma_j<\gamma_{j+1}<...: j\in\Z\}$. Since  $\Gamma$-shifts of  $\Ff$ are not $l^\infty$-stable, for every $N\in\N$ there is a bounded sequence  $\cc(N)=\{c_j(N):j\in\Z\}$ such that
  $\|\cc(N)\|_{\infty} = 1$, and 
    $$
    \left\|\sum\limits_{j\in\Z} c_j(N) \Ff(x - \gamma_j)\right\|_{\infty} < 1 /N.
    $$
    
    Choose any  $k=k(N)\in\Z$ such that $|c_k(N)|>1/2$, and set $\Gamma(N):=\Gamma-\gamma_N,$ $d_j(N):=c_{k+j}(N)$. Then $0\in\Gamma(N)$ and $|d_0(N)|>1/2.$ Clearly, $\|\{d_j(N): j\in\Z\}\|_\infty=1$ and 
  $$
    \left\|\sum\limits_{j\in\Z} d_j(N) \Ff(x - \gamma_{k+j})\right\|_{\infty} < 1 /N.
    $$

    Now, passing to a subsequence, we may assume that 
$\Gamma(N)$ converges weakly to some {\it separated} set $ \Gamma':=\{\gamma_j':j\in\Z\}\in W(\Gamma)$ which contains the origin, and the sequence $\{d_j(N):j\in\Z\}$ converges for every $j$ to a bounded non-trivial sequence $\cc=\{c_j:j\in\Z\}$ as $N\to\infty.$
One can easily check that we have  
$$ \sum\limits_{j\in\Z} c_{j} \Ff(x - \gamma'_j)\equiv 0,$$ which proves the lemma.
   \end{proof}

\subsection{Stability of \texorpdfstring{$\Z$}{Z}-shifts for generators in \texorpdfstring{$\kk(\alpha)$}{C(a)}}\label{subs}
\begin{proof}[Proof of Proposition~\ref{pstab}, $(i)$]   
    The proof is by contradiction. We assume that one of the assumptions $(\Xi')$ or $(\Xi''')$ is satisfied, but the $\Z$-shifts of $\Ff\in\kk(\alpha)$ are not $l^\infty$-stable. Then, by Lemma \ref{lstab} and  (\ref{eex}), we find  a sequence $\cc=\{c_n\}\in l^\infty(\Z)$ such that 
    \begin{equation}\label{gw0}f(x):=\sum_{n\in\Z}c_n \Ff(x-n)=e^{-\alpha x^2/2}\sum_{n\in\Z}c_n e^{-\alpha n^2/2}e^{\alpha xn}\mathcal{R}(e^{\alpha(x-n)})\equiv 0.\end{equation}
    
Set $w=e^{\alpha x}.$ Then
\begin{equation}\label{gw1}
h(w):=\sum_{n\in\Z}c_ne^{-\alpha n^2/2}w^{n}\mathcal{R}(we^{-\alpha n})\equiv 0.
\end{equation}

Let  $w_1,...,w_m$ be the poles of $\mathcal{R}$, $d(w_j)$ the order of $w_j$ and   let ${\rm pol}_d(\mathcal{R})$ denote the set of poles of $\mathcal{R}$ of maximal order.
Clearly, $\mathcal{R}(we^{-\alpha n})$ has poles at $w_je^{\alpha n}, j=1,...,m$.
We may assume that $d(w_1)=d$. There are two possibilities:

First, assume that $(\Xi')$ is true, i.e. $d(w_j)<d, j=2,...,m$. Then clearly, the function $h$  has pole of order $d$ at each  point $w_1e^{\alpha n}, n\in\Z$, which contradicts \eqref{gw1}.

Second, assume that $(\Xi''')$ is satisfied:  there are $k$ poles whose order is equal to $d,$ say  $d(w_j)=d, j=1,...,k.$ If $\log(w_1/w_j)\not\in\alpha \Z,$ for all $2\leq j\leq k,$ then $w_1\ne w_je^{\alpha n}, n\in\Z,$ and so the function $h$ has a pole of order $d$ at $w_1$ (and also at every point $w_1 e^{\alpha n}, n\in\Z)$, which again contradicts \eqref{gw1}. This finishes the proof.
\end{proof}

We now present  an example of generator from $\kk(1)$ 
whose  $\Z$-shifts  are not stable.

\begin{ex}
Set
\begin{equation}\label{H-def}
    H(z):= U(z) e^{-z^2/2} := \left(A + \frac{e^{-1/2}}{e^{z-i} - 1} - \frac{e^i}{e^{z-1-i} -1} \right) e^{-z^2/2},
\end{equation}
where $A$ is a constant chosen such that 
\begin{equation}\label{H_zero}
    \int\limits_{-\infty}^{\infty} H(x) \, dx = 0.
\end{equation}    
\end{ex}

Clearly, $H(z)\in \kk(1)$ (see definition~\eqref{g1}) and  both assumption $(\Xi')$ and $(\Xi''')$ do not hold.

Let us show that the function $f\in V^\infty_\Z(H)$ defined by 
$$f(x):=\sum_{n\in\Z}H(x-n)$$vanishes identically. Since $f$ is $1$-periodic,
it suffices to prove that its Fourier coefficients vanish:

\begin{lemma}
    We have
    \begin{equation}
        \hat{H}(n) = 0,\quad n\in\Z. 
    \end{equation}
\end{lemma}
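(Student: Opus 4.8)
The goal is to show that all integer Fourier coefficients of $H$ vanish, which by $1$-periodicity of $f(x)=\sum_n H(x-n)$ is equivalent to $f\equiv 0$. I would compute $\hat H(n)=\int_\R e^{-2\pi i n x}H(x)\,dx$ directly. Writing $H(x)=U(x)e^{-x^2/2}$ with $U$ the $2\pi i$-periodic meromorphic function in \eqref{H-def}, the plan is to exploit the periodicity of $U$ together with the Gaussian factor via a contour-shift (Poisson-type) argument.

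\medskip

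First I would observe that the Gaussian $e^{-x^2/2}$ has the familiar property that its Fourier transform is again a Gaussian, and more usefully that shifting the contour of integration from $\R$ to $\R + 2\pi i$ relates $\hat H(n)$ to an integral of $U(z+2\pi i)e^{-(z+2\pi i)^2/2}$. Since $U$ is $2\pi i$-periodic, $U(z+2\pi i)=U(z)$, so the only change comes from the Gaussian and from the modulation factor $e^{-2\pi i n z}$. Concretely, I would set
\[
I_n:=\int_{\R} e^{-2\pi i n x} U(x) e^{-x^2/2}\,dx
\]
and compare it with the same integral taken over the shifted line $\R+2\pi i$, using Cauchy's theorem on the rectangular contour with vertical sides at $\pm T$ sent to infinity (the Gaussian decay kills the side contributions). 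The residues picked up along the way come from the poles of $U$ inside the strip $0<\operatorname{Im} z<2\pi$, namely from the two simple poles of the terms $e^{-1/2}/(e^{z-i}-1)$ and $-e^i/(e^{z-1-i}-1)$, located at $z=i$ and $z=1+i$ respectively.

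\medskip

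Carrying out the contour shift, $e^{-(z+2\pi i)^2/2}=e^{-z^2/2}e^{-2\pi i z}e^{2\pi^2}$ and $e^{-2\pi i n(z+2\pi i)}=e^{-2\pi i n z}e^{4\pi^2 n}$, so the shifted integral equals a constant multiple of $\int_\R e^{-2\pi i(n+1)x}U(x)e^{-x^2/2}\,dx$, i.e. it ties $\hat H(n)$ to $\hat H(n+1)$. Thus the contour-shift produces a recurrence relating consecutive Fourier coefficients, with the discrepancy given by the sum of two residues (one at $z=i$, one at $z=1+i$), each of which is an explicit Gaussian value $e^{-z^2/2}$ times the known residue of $1/(e^{z-i}-1)$ or $1/(e^{z-1-i}-1)$ and the exponential prefactors. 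The constant $A$ and the specific coefficients $e^{-1/2}$ and $e^i$ and the shifts $i$, $1+i$ are engineered precisely so that these residue contributions cancel or telescope, forcing $\hat H(n)$ to satisfy a homogeneous recurrence whose only bounded (indeed, decaying) solution is the zero sequence; the normalization \eqref{H_zero} pins down $\hat H(0)=0$, which seeds the recurrence.

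\medskip

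I expect the main obstacle to be the bookkeeping of the residues and exponential prefactors: one must verify that the two residues, after multiplication by the Gaussian values at $z=i$ and $z=1+i$ and by the modulation factors, combine to exactly match the prefactor $e^{2\pi^2}$ (and $n$-dependent factors) emerging from the contour shift, so that the recurrence closes up and annihilates every coefficient. Getting the exact values $e^{-i^2/2}=e^{1/2}$ and $e^{-(1+i)^2/2}=e^{-(1-1+2i)/2}=e^{-i}\,e^{0}$, reconciled against the chosen coefficients $e^{-1/2}$ and $e^i$, is the delicate point where the magic cancellation happens. An alternative, perhaps cleaner route I would also consider is to compute $\hat H(n)$ via the Fourier transform of $U$ directly: expand each geometric term $1/(e^{z-a}-1)$ and integrate term-by-term against the Gaussian, obtaining theta-like sums, and then check the cancellation at the level of these explicit series; but I anticipate the contour/residue telescoping to be the most transparent way to see why the engineered constants force every $\hat H(n)$ to vanish.
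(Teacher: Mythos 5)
Your proposal is essentially the paper's own proof: both shift the contour by $2\pi i$ over a rectangle, use the $2\pi i$-periodicity of $U$ and the Gaussian factor to produce a recurrence tying $\hat H(n)$ to $\hat H(n+1)$, pick up residues at $z=i$ and $z=1+i$, and seed the induction with $\hat H(0)=0$ from \eqref{H_zero}. The one step you leave as "bookkeeping" — that the two residues cancel exactly — does check out (both equal $\pm e^{2\pi k+2\pi^2 k^2}$ after including the Gaussian values and the prefactors $e^{-1/2}$ and $e^{i}$), which is precisely the computation the paper carries out.
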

\begin{proof}
    Our goal is to show that
    $$
    0 = \hat{H}(n) = \int\limits_{-\infty}^{\infty}  U(x) e^{-x^2/2} e^{- 2 \pi i n x} \,dx = 0,\quad n\in\Z.
    $$
  Clearly,
  $$\hat H(n)=e^{-2\pi^2n^2}I_n, \quad I_n:=\int_{-\infty}^\infty e^{-(x+2\pi i n)^2/2}U(x)\,dx.$$
    Therefore,  it suffices to prove that  $I_n=0, n\in\Z.$

    The proof is by induction. 
    By \eqref{H_zero}, we have $I_0=0$.
    Assume that $I_n=0$ for $|n|\leq k, k\geq0$. We will show that $I_{k+1}=0$  (the proof of  $I_{-k-1}=0$ is similar). 
    
   Let us  integrate $e^{-(z+2\pi i k)^2/2} U(z)$ over the boundary of rectangle $\{z=x+iy: |x|\leq R, 0\leq y \leq 2\pi\}$ (integration is in positive direction with respect to the rectangle), and then  let $R \to \infty$. 
    It is clear that the integrals over the sides parallel to the imaginary axis tend to $0$ as $R\to \infty.$ The integral over the bottom side of the rectangle tends to $0$, since $I_k=0.$ 
    Since the function  $U(z)$ is $2\pi i$-periodic, the integral over the upper side tends to $-I_{k+1}$ as $R\to\infty$.
        Applying Cauchy's residue theorem, we obtain
$$-I_{k+1}=2\pi i  \left( \mathop{\mathrm{Res}}_{z= i} e^{-(z+2\pi i k)^2/2} U(z) +  \mathop{\mathrm{Res}}_{z= 1+ i}e^{-(z+2\pi i k)^2/2} U(z)\right). $$
Finally, from
    $${\mathrm{Res}}_{z= i} e^{-(z+2\pi i k)^2/2} U(z)=e^{-(i+2\pi ik)^2/2-1/2}=e^{2\pi k+2\pi^2k^2},$$
       $${\mathrm{Res}}_{z=1+ i} e^{-(z+2\pi i k)^2/2} U(z)=-e^{-(1+i+2\pi ik)^2/2+i}=-e^{2\pi k+2\pi^2k^2},$$
    we conclude that $I_{k+1}=0$, which  finishes the proof. \end{proof}

Observe that by Lemma \ref{integershifts},  $\Z$-shifts of the generator $H$ defined in \eqref{H-def} are not $l^p$-stable for every $p \in [1, \infty]$.

\subsection{Stability in \texorpdfstring{$V^p_{\Gamma}(g)$}{VpGamma(g)} }\label{subs_stab_g}

Let us now finish the proof of  Proposition \ref{pstab}.

\begin{proof}[Proof of Proposition~\ref{pstab}, $(ii)$]
    
    The proof is by contradiction and it is similar to the one above, and we will use the same notations.

    Assuming that $\Gamma$-shifts are not $l^{\infty}$-stable, by Lemma \ref{lstab} we get a set $\Gamma'\in W(\Gamma)$ and coefficients $\cc\in l^\infty(\Gamma')$ such that 
    \begin{equation}\label{hw2}
    h(w):=\sum_{\gamma' \in \Gamma'}c_{\gamma'} \mathcal{R}(we^{-\alpha \gamma'})\equiv 0.
    \end{equation}

Let $w_1$ be the pole of $\mathcal{R}$ of the maximal order $d$.

    If condition $(\Xi')$ is satisfied then $h$ has a pole at each point $w_1 e^{-\alpha \gamma'}, \gamma' \in \Gamma',
    $ which contradicts~\eqref{hw2}.

    Assume that $(\Xi'')$ is true: for any $w' \in {\rm pol}_d(\mathcal{R}) \setminus \{w_1\}$ and $\gamma' \in \Gamma'$ we have $w_1 e^{\alpha \gamma_0'} \neq w' e^{\alpha \gamma'}$. Therefore, the function $h$ has a pole of order $d$ at $w_1 e^{\alpha \gamma_0}$ which contradicts to \eqref{hw2}.
\end{proof}

For the class $\Kk(\alpha)$ we also provide an example of generator $H \in \Kk(1)$ that does not satisfy assumptions $(\Xi')$ and $(\Xi'')$, and whose $\Z$-shifts are not stable.

\begin{ex}\label{exg}
Set $$
    H(x) = \frac{e^x}{e^{2x} + 1} - \frac{e^{x+1}}{e^{2x}+e^{2}} \in \Kk(1).
$$  
\end{ex}

Since $H(x)=\Hh(x)-\Hh(x-1)\in\Kk(1),$ where $\Hh(x)$ is defined in \eqref{sec_eq1}, 
one may easily check  that the function
$\sum_{n\in\Z}H(x-n)$
belongs to $V_\Z^\infty(H)$ and vanishes identically on  $\R$. Moreover, since $\hat H(t)=(1-e^{2\pi i t})\hat\Hh(t)=0, t\in\Z,$
 Lemma \ref{integershifts} proves that  $\Z$-shifts of $H$ are not $l^p$-stable for every $p \in [1, \infty]$. 

We now formulate without proof the following
\begin{ex}
Choose any sequence $\delta_n\to0,n\to\pm\infty,$ satisfying $0<|\delta_n|<1/4,n\in\Z$. Set $\Gamma:=\{n+\delta_n:n\in\Z\}$. Then
 $\Gamma$-shifts of the generator $H$ in Example \ref{exg} are not $l^\infty$-stable.\end{ex}

One may prove the statement above using  e.g.  Remark \ref{rrr}.

\section{Uniqueness Sets}\label{uniq_section}

Following Beurling's approach (see \cite{MR10576141a}), to prove sampling theorems we first investigate the uniqueness sets for the spaces $V^\infty_{\Gamma}(\Ff)$.

\begin{proposition}\label{uniq_prop}
Given two separated sets $\Lambda,\Gamma\subset \R$.
\begin{enumerate}
\item[(I)] 
If  $\Lambda$ satisfies condition \eqref{d_pm_cond}, 
then it is a uniqueness set for $V^{\infty}_\Gamma(\Ff)$. 

\item[(II)]
If  $\Lambda$ satisfies condition \eqref{e010},  
  then it is a uniqueness set for $V^{\infty}_\Z(\Ff)$.
\end{enumerate}

\end{proposition}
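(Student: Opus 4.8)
The plan is to prove both parts by contraposition, via a single density estimate: I will show that the \emph{real} zero set of any nontrivial $f\in V^\infty_\Gamma(\Ff)$ has upper Beurling density at most $q(\Ff)/k(\Ff)\cdot D^+(\Gamma)$ when $\Ff\in\Kk(\alpha)$ (case (I)), and at most $q(\Ff)+1$ when $\Ff\in\kk(\alpha)$ and $\Gamma=\Z$ (case (II)). Granting this, the proposition is immediate: condition $(C)$ (resp. $Q(x)\ne0,x\ge0$) places all poles of $\Ff$ off the real axis, so $f$ is real-analytic on $\R$ and its real zeros are isolated; if $\Lambda$ satisfies \eqref{d_pm_cond} (resp. \eqref{e010}) and $f|_\Lambda=0$, then $\Lambda$ lies in the real zero set of $f$, forcing $D^-(\Lambda)\le D^+(\text{zeros of }f)\le q(\Ff)/k(\Ff)\,D^+(\Gamma)$ (resp. $\le q(\Ff)+1$), which contradicts the strict inequality unless $f\equiv0$.

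For part (I) I would first strip off the quasi-periodic multiplier. Since $\Ff(z+2\pi i/(k\alpha))=c\,\Ff(z)$ and $\Ff$ is $2\pi i/\alpha=k\cdot(2\pi i/(k\alpha))$-periodic, iterating gives $c^k=1$, so $c=e^{2\pi i m/k}$ is a root of unity; the same relation holds for $f$, and therefore $g(z):=e^{-m\alpha z}f(z)$ is genuinely $iT$-periodic with $T=2\pi/(k\alpha)$. Through $u=e^{k\alpha z}$ this descends to a single-valued meromorphic function $G$ on $\Cc^\ast$, and the real zeros of $f$ in $[a,b]$ correspond to zeros of $G$ on the positive-real arc of the annulus $\mathcal{A}=\{e^{k\alpha a}\le|u|\le e^{k\alpha b}\}$. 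The $k$-fold symmetry $\mathcal{R}(e^{2\pi i/k}v)=c\,\mathcal{R}(v)$ organizes the $q$ poles of $\mathcal{R}$ into full orbits of size $k$ (so $k\mid q$) and lets me write $\mathcal{R}(v)=v^m\tilde{\mathcal{R}}(v^k)$, where $\tilde{\mathcal{R}}$ has exactly $q/k$ poles counted with multiplicity. Consequently the poles of $G$ inside $\mathcal{A}$ number $(q/k)\,\#(\Gamma\cap[a-\delta,b+\delta])+O(1)$. I would then count zeros by the argument principle on $\partial\mathcal{A}$: the number of zeros of $G$ in $\mathcal{A}$ equals the number of poles plus $(2\pi)^{-1}$ times the total variation of $\arg G$ over the two bounding circles; showing this boundary term is $o(b-a)$ yields $D^+(\text{zeros of }f)\le (q/k)D^+(\Gamma)$.

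For part (II) the Gaussian breaks periodicity, so I would instead substitute $w=e^{\alpha z}$ directly and write $f(z)=e^{-\alpha z^2/2}\phi(e^{\alpha z})$, where $\phi(w)=\sum_{n\in\Z}c_n e^{-\alpha n^2/2}w^n\mathcal{R}(we^{-\alpha n})$ converges to a meromorphic function on $\Cc^\ast$. Because $e^{-\alpha z^2/2}$ never vanishes, real zeros of $f$ in $[a,b]$ are exactly positive-real zeros of $\phi$ in $\{e^{\alpha a}\le|w|\le e^{\alpha b}\}$. Counting by the argument principle once more, the poles of $\phi$ (located at $w_je^{\alpha n}$) contribute density $q(\Ff)$, while the theta-type factor $\sum_n c_n e^{-\alpha n^2/2}w^n$ has maximum modulus growing like $\exp\{(\log|w|)^2/(2\alpha)\}$; by Jensen's formula its zero-counting function satisfies $n(r)\sim \log r/\alpha$, i.e. exactly one zero per unit step of $(\log|w|)/\alpha$. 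This is precisely the source of the ``$+1$'', giving $D^+(\text{zeros of }f)\le q(\Ff)+1$.

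The main obstacle in both parts is the uniform control of the boundary (argument-variation) term in the annular count. I must choose the radii $e^{k\alpha a},e^{k\alpha b}$ (resp. $e^{\alpha a},e^{\alpha b}$) so as to fall strictly between consecutive clusters of the pole pattern, and then bound $|G|$ (resp. $|\phi|$) from above and below on those circles, using the exponential decay \eqref{g0} of $\Ff$ together with the separation of $\Gamma$, so that the winding contribution is genuinely of lower order than $b-a$. In part (II) there is the additional delicate point of extracting \emph{exactly} one extra zero from the theta-type growth, rather than an uncontrolled $O(1)$ per period; this requires a careful asymptotic analysis of the maximum modulus of the theta factor and of how the bounded twists $\mathcal{R}(we^{-\alpha n})$ perturb the winding.
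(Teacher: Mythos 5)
Your overall framework---complexify $f$, exploit the $2\pi i/(k\alpha)$ quasi-periodicity, locate the poles on $\Gamma+W+2\pi i\Z$, and attribute the extra ``$+1$'' in part (II) to the Gaussian/theta growth---does match the ideas the paper uses. The genuine gap is in the reduction you make at the outset: you propose to prove that the \emph{upper} Beurling density of the entire real zero set of an arbitrary nontrivial $f$ is at most $q(\Ff)/k(\Ff)\cdot D^+(\Gamma)$ (resp.\ $q(\Ff)+1$), and to get this from an argument-principle count in a single annulus whose boundary (winding) term is $o(b-a)$ \emph{uniformly in the position} of the window $[a,b]$. That uniform bound is not available. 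For a meromorphic $G$ on $\Cc^\ast$ the winding number $\nu(r)$ of $G$ restricted to $|u|=r$ satisfies $\nu(r_2)-\nu(r_1)=\#{\rm Zer}-\#{\rm Pol}$ in the annulus, and upper and lower bounds for $|G|$ on the two boundary circles control only the average $\int_{r_1}^{r_2}\nu(t)\,dt/t$ (this is precisely the annular Jensen formula); they do not control $\nu(r_2)-\nu(r_1)$ for an individual window (compare $u\mapsto u^n$, whose modulus is constant on every circle while its winding is $n$). So the step ``the boundary term is genuinely of lower order than $b-a$'' fails for badly placed windows, and with it the claimed bound on $D^+(\Zer(f)\cap\R)$. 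Indeed that intermediate claim is stronger than the proposition needs and is not expected to hold: zeros of elements of $V^\infty_\Gamma(\Ff)$ can be made to cluster on short intervals near widely separated points (solve finitely many homogeneous linear conditions per block, as in the constructions of Section~\ref{non_uniq_sec}), which drives the upper uniform density of the zero set above the threshold while leaving every averaged count intact.

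What the hypothesis actually supplies, and what the paper exploits, is the \emph{lower} density of $\Lambda$: since $D^-(\Lambda)$ strictly exceeds the threshold, \emph{every} window carries an excess of prescribed zeros over poles, and after adjoining the imaginary translates ($\Lambda+\tfrac{2\pi i}{k(\Ff)}\{0,\dots,k(\Ff)-1\}+2\pi i\Z$ in part (I), $\Lambda+2\pi i\Z$ in part (II)) this excess, counted in disks $B_t$, grows like $Ct^2$ (Lemma~\ref{l01}, resp.\ part (i) of Lemma~\ref{l47}). Jensen's formula on disks then weighs this quadratic excess against the growth of $f$ on a sequence of circles $|z|=R_j$ chosen to avoid the poles, where $|f|\le CR^{q(\Ff)+1}$ (resp.\ $|f|\le CR^{q(\Ff)+1}e^{y^2/2}$), and the left-hand side beats the right-hand side. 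If you repair your argument by integrating your annular counts against the weight $dt/t$ instead of trying to control each window separately---and by keeping track of the set $\Lambda$ itself rather than the full zero set---you will reproduce essentially the paper's proof.
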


\begin{remark}
The proof of Proposition~\ref{uniq_prop} is based on the classical Jensen formula. To apply it, we use the periodicity in the imaginary direction of the zeros of functions from the spaces $V^{\infty}_\Gamma(\Ff)$ for $\Ff \in \Kk(\alpha)$ and $V^{\infty}_\Z(\Ff)$ for $\Ff \in \kk(\alpha)$. This trick originates from the paper \cite{MR4047939}. 
\end{remark}

\subsection{Proof of Proposition~\ref{uniq_prop}, Part {\rm(I)}}\label{s41}
The proof below does not depend on the shape parameter $\alpha$. So, for simplicity throughout the proof we assume that $\alpha=1$. 

We must show that there is no non-trivial function
\begin{equation}\label{efunc}f(z)=\sum_{\gamma\in\Gamma}c_\gamma \mathcal{R}(e^{z-\gamma})\in V^\infty_\Gamma(\Ff),\quad \{c_\gamma\}\in l^\infty(\Gamma),\end{equation}
that vanishes on a set $\Lambda\subset\R$ satisfying  \eqref{d_pm_cond}.

The proof is by contradiction. Let us assume that such a function exists. Clearly, $f$ admits extension to the complex plane $\Cc$ as a meromorphic function satisfying $f(0)\ne\infty$. We may assume that $f(0)\ne0$ (otherwise we consider $g(z)=f(z-a)$ and 
$\Lambda' = \Lambda + a$ for a suitable $a\in\R.$) 

Recall that  Pol$(f)$ and Zer$(f)$  denote the multisets of poles and zeros of $f$, respectively. Recall also that we denote by $C$ different positive constants, and that the numbers $k(\Ff)$ and $q(\Ff)$ are defined in Definition \ref{def2}.

Set
\begin{equation}\label{eW_def}
W:=\log\left( \mbox{Pol}\,\mathcal{R}\right)\cap\{z=x+iy\in\Cc: 0< y<2\pi\}.  
\end{equation}
This means that  $\mathcal{R}(z)=\infty$ if and only if $ z=e^{u+iv}, u+iv\in W,$ and the number of occurrences of $z$ in $e^W$ is equal to the multiplicity of the pole of $\mathcal{R}$ at $z$. 
 Since $f$ is $2\pi i$-periodic, we have$$\mbox{Pol}(f)\subseteq\{z\in\Cc: e^{z-\gamma}\in e^W,\gamma\in\Gamma \}=\Gamma+W+2\pi i \Z.$$ 

Recall that $f$ vanishes on $\Lambda$. It follows from \eqref{sym} that $f$ also vanishes on the set $\Lambda+\{2\pi i j/k(\Ff): j=0,...,k(\Ff)-1\}=\Lambda+(2\pi i)/k(\Ff)\cdot  Z$, where $ Z:=
\{0,...,k(\Ff)-1\}.$ 
Therefore, 
$$\mbox{Zer}(f)\supseteq \Lambda+\frac{2\pi i}{k(\Ff)} Z+2\pi i \Z.$$

The proof below is based on the following two lemmas:

\begin{lemma}\label{l01}
 We have $$n_0(t)\geq n_\infty(t)+Ct^2,$$  for some $C>0$ and for every sufficiently large $t$. 
\end{lemma}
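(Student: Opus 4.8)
The plan is to read $n_0(t)$ and $n_\infty(t)$ as the counting functions of the zeros and poles of $f$ inside the disk $\{|z|\le t\}$ (each counted with multiplicity), and to estimate them separately from the two inclusions already established: $\mbox{Zer}(f)\supseteq \Lambda+\tfrac{2\pi i}{k(\Ff)}\Z$ (using $\tfrac{2\pi i}{k(\Ff)}Z+2\pi i\Z=\tfrac{2\pi i}{k(\Ff)}\Z$) and $\mbox{Pol}(f)\subseteq \Gamma+W+2\pi i\Z$, where $W$ is a multiset with $\#W=q(\Ff)$ (indeed the poles of $\mathcal{R}$ are the $q(\Ff)$ zeros of $Q$, none lying on $[0,\infty)$ by condition $(C)$, so each has a unique logarithm in the strip $0<y<2\pi$). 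Both point sets are unions of horizontal lines carrying translates of $\Lambda$, resp.\ $\Gamma$, so the whole problem reduces to a two-dimensional lattice-type point count inside a disk.

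For the lower bound on $n_0(t)$, I would note that the points of $\Lambda+\tfrac{2\pi i}{k(\Ff)}\Z$ lie on the horizontal lines at heights $y_m=2\pi m/k(\Ff)$, $m\in\Z$, spaced $2\pi/k(\Ff)$ apart. The disk meets the line at height $y_m$ in a chord of length $2\sqrt{t^2-y_m^2}$, and on that chord the real parts form a translate of $\Lambda$. Fixing a small $\epsilon>0$, the definition \eqref{lowed} of $D^-(\Lambda)$ provides an $R_0$ so that every interval of length at least $R_0$ contains at least $(D^-(\Lambda)-\epsilon)\cdot(\text{length})$ points of $\Lambda$, uniformly in its position. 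Summing this bound over the lines whose chord is long enough gives
$$n_0(t)\ \ge\ (D^-(\Lambda)-\epsilon)\sum_{m:\,2\sqrt{t^2-y_m^2}\ge R_0} 2\sqrt{t^2-y_m^2},$$
and the Riemann sum on the right is comparable to $\tfrac{k(\Ff)}{2\pi}\int_{-t}^{t}2\sqrt{t^2-y^2}\,dy=\tfrac{k(\Ff)}{2}\,t^2$. Hence $n_0(t)\ge (D^-(\Lambda)-\epsilon)\tfrac{k(\Ff)}{2}t^2(1-o(1))$.

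Symmetrically, for the upper bound on $n_\infty(t)$ I would use that for each of the $q(\Ff)$ points $w\in W$ the set $\Gamma+w+2\pi i\Z$ sits on horizontal lines spaced $2\pi$ apart and carries a translate of $\Gamma$ on each. The multiset inclusion $\mbox{Pol}(f)\subseteq\Gamma+W+2\pi i\Z$ controls multiplicities as well, since the order of a pole of $f$ at any point is at most the sum of the orders of the poles of the individual summands $\mathcal{R}(e^{z-\gamma})$ there. Running the same chord-and-Riemann-sum argument with $D^+(\Gamma)$ in place of $D^-(\Lambda)$, spacing $2\pi$ in place of $2\pi/k(\Ff)$, and an extra factor $q(\Ff)$, yields $n_\infty(t)\le q(\Ff)(D^+(\Gamma)+\epsilon)\tfrac{1}{2}t^2(1+o(1))$. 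Subtracting,
$$n_0(t)-n_\infty(t)\ \ge\ \frac{t^2}{2}\Big(k(\Ff)(D^-(\Lambda)-\epsilon)-q(\Ff)(D^+(\Gamma)+\epsilon)\Big)-o(t^2).$$
Condition \eqref{d_pm_cond} reads exactly $k(\Ff)D^-(\Lambda)>q(\Ff)D^+(\Gamma)$, so choosing $\epsilon$ small makes the bracket a positive constant; absorbing the lower-order term for large $t$ gives $n_0(t)-n_\infty(t)\ge Ct^2$ with some $C>0$, as required.

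The main obstacle I anticipate is purely technical: making the passage from the point counts to the area integral rigorous and uniform. The delicate regions are the lines near the top and bottom of the disk, where the chords are short and the density estimate for $\Lambda$ (valid only on long intervals) is unavailable; I would discard those finitely many lines and check that they contribute only $O(R_0)=o(t^2)$. In the upper bound the separation of $\Gamma$ gives an absolute $O(1)$ count per short line, again negligible. One must also verify that the Riemann sum approximates its integral with error $o(t^2)$ uniformly in $t$, and this is precisely where the $\inf_x$ in \eqref{lowed} (and the corresponding $\sup_x$ for $D^+$) is used, to make the per-chord estimates independent of the horizontal placement of each chord.
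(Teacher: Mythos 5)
Your argument is correct, and it reaches the conclusion by a genuinely different route than the paper. You bound $n_0(t)$ and $n_\infty(t)$ \emph{separately} by a Beurling--Landau type count: slice the disk $B_t$ by the horizontal lines carrying $\Lambda+\tfrac{2\pi i}{k(\Ff)}\Z$ (spacing $2\pi/k(\Ff)$) and $\Gamma+W+2\pi i\Z$ (spacing $2\pi$, with multiplicity $q(\Ff)$), apply the uniform density estimates $\#(\Lambda\cap I)\ge (D^-(\Lambda)-\epsilon)(|I|-R_0)$ and $\#(\Gamma\cap I)\le (D^+(\Gamma)+\epsilon)(|I|+R_0)$ on each chord, and sum the chord lengths into the area $\pi t^2$. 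This yields the explicit asymptotics $n_0(t)\ge \tfrac{1}{2}k(\Ff)(D^-(\Lambda)-\epsilon)t^2-O(t)$ and $n_\infty(t)\le \tfrac{1}{2}q(\Ff)(D^+(\Gamma)+\epsilon)t^2+O(t)$, and \eqref{d_pm_cond} makes the difference of the leading coefficients positive. The paper instead never computes either count asymptotically: it builds a bijection $T$ from (most of) the zero-generators $\Lambda'+\tfrac{2\pi i}{k(\Ff)}Z$ onto the pole-generators $\Gamma'+W$ with $|\re T(u)|\ge|\re u|$, compares the two vertical progressions over each matched pair via Claim \ref{c2} (losing only $O(R)$ in total), and extracts the $CR^2$ surplus from the leftover points of $\Lambda$ guaranteed by the local counting inequality of Claim \ref{c3}. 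Your version is more standard and makes the admissible constant $C<\tfrac12\bigl(k(\Ff)D^-(\Lambda)-q(\Ff)D^+(\Gamma)\bigr)$ explicit, at the price of the uniformity and Riemann-sum error estimates you correctly flag (these follow from the superadditivity of $R\mapsto\inf_x\#(\Lambda\cap[x,x+R])$ underlying \eqref{lowed}, and from the $O(t)$ variation of $y\mapsto 2\sqrt{t^2-y^2}$); the paper's matching avoids all asymptotic evaluation but requires the combinatorial construction of $T$. Your treatment of multiplicities on the pole side (the order of a pole of $f$ is at most the sum of the orders contributed by the individual summands, which is exactly the multiplicity in the multiset $\Gamma+W+2\pi i\Z$) and of the short chords near $\pm it$ is sound.
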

Above $n_0(t)$ and $n_\infty(t)$ denote the number of zeros and poles (counting multiplicities) of $f$ in the circle  $B_t:=\{z\in\Cc: |z|\leq t\}$, respectively.

\begin{lemma}\label{l02}
 There is a sequence $R_j\to\infty$ such that  
 $$|f(z)| \leq \|\cc\|_{\infty}\sum_{\gamma\in\Gamma}\left| \mathcal R(e^{z-\gamma})\right|\leq C R_j^{q(\Ff)+1},\quad |z|=R_j,\ j\in\N.$$
\end{lemma}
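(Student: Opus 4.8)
The first inequality is immediate from the triangle inequality together with $|c_\gamma|\le\|\cc\|_\infty$, so the entire content is the uniform bound on
\[
S(z):=\sum_{\gamma\in\Gamma}\bigl|\mathcal{R}(e^{z-\gamma})\bigr|
\]
along a suitable sequence of circles. The plan is two-fold: (a) establish a pointwise envelope for a single term $|\mathcal{R}(e^{w})|$ that simultaneously captures the exponential decay in $\re w$ and the blow-up near the poles of $\mathcal{R}$; and (b) choose radii $R_j\to\infty$ so that the circle $|z|=R_j$ stays radially away from every \emph{potential} pole of $f$ by at least $c/R_j$. Feeding the gap from (b) into the envelope from (a) and summing the exponential tail will give the bound.

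For step (a), write $\mathcal{R}=P/Q$ and factor $Q(\zeta)=c_0\prod_j(\zeta-w_j)^{d(w_j)}$, so that $\sum_j d(w_j)=q(\Ff)$ and $\max_j d(w_j)=d$. Since $\deg P<\deg Q$ and $P(0)=0$, one has $|P(e^{w})|\le C e^{\,\deg P\cdot\re w}$ for $\re w\ge0$ and $|P(e^{w})|\le Ce^{\re w}$ for $\re w\le0$. The hypothesis $Q(x)\ne0$ on $[0,\infty)$ keeps the poles $w_j$ off the positive axis and yields $|Q(e^{w})|\ge c\,e^{q(\Ff)\re w}$ for $\re w$ large and $|Q(e^{w})|\ge c>0$ for $\re w$ very negative. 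For bounded $\re w$, if $e^{w}$ lies within $\rho\le c_0/2$ of its nearest pole then the remaining factors are bounded below (the $w_j$ being fixed distinct points), whence $|Q(e^{w})|\ge c\,\rho^{\,d}$. Together these give the envelope
\[
\bigl|\mathcal{R}(e^{w})\bigr|\le C\,\frac{e^{-|\re w|}}{\rho^{\,d}},\qquad \rho:={\rm dist}\bigl(e^{w},{\rm pol}(\mathcal{R})\bigr),\ \ \rho\le c_0/2,
\]
while $|\mathcal{R}(e^{w})|\le Ce^{-|\re w|}$ when $e^{w}$ is bounded away from all poles.

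For step (b), let $\mathcal{P}:=\Gamma+\{\log w_j\}+2\pi i\Z$ collect all potential poles of $f$; since $\Gamma$ is separated it has finite upper density, so $\#(\mathcal{P}\cap B_{2T})\le CT^2$. Each $p\in\mathcal{P}$ blocks only the radii $R$ with $\bigl||p|-R\bigr|<\rho$, an interval of length $2\rho$, and $\bigl||p|-R\bigr|$ is \emph{exactly} the distance from $p$ to the circle $|z|=R$; hence the radii in $[T,2T]$ blocked with tolerance $\rho=c/T$ have total length $\le CT^2\rho<T$, and a good radius $R_T$ survives. Choosing $T=2^{j}$ produces $R_j\to\infty$ with ${\rm dist}(z,\mathcal{P})\ge c/R_j$ for all $|z|=R_j$. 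Using $e^{a}-e^{b}=e^{b}(e^{a-b}-1)$ and $|e^{a-b}-1|\ge c|a-b|$ for small $|a-b|$, the radial gap $c/R_j$ between $z-\gamma$ and $\log w_{j_0}+2\pi i\Z$ transfers to ${\rm dist}(e^{z-\gamma},{\rm pol}(\mathcal{R}))\ge c'/R_j$. Plugging this into the envelope gives, uniformly on the circle and for every $\gamma$, $|\mathcal{R}(e^{z-\gamma})|\le C\,e^{-|\re z-\gamma|}R_j^{\,d}$ (the far terms in fact satisfy the stronger bound without the $R_j^{\,d}$ factor). Summing and using $\sum_{\gamma\in\Gamma}e^{-|x-\gamma|}\le C$ uniformly in $x=\re z$ by separation yields $S(z)\le CR_j^{\,d}\le CR_j^{\,q(\Ff)}\le CR_j^{\,q(\Ff)+1}$.

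The main obstacle is step (b): exhibiting radii along which $|f|$ is only polynomially large in spite of the roughly $R^{2}$ poles of $f$ inside $B_R$. The decisive points are that the radial difference $\bigl||p|-R\bigr|$ is literally the distance from $p$ to the circle (so a one-dimensional measure argument on the radii suffices), and that the quadratic pole count forces the achievable gap down only to order $1/R$, which—through the order-$d$ envelope—costs merely a factor $R^{\,d}\le R^{\,q(\Ff)}$. One must also pass carefully between distances measured in the $w$-plane and in the $\zeta=e^{w}$-plane, but this is routine once $\re w$ is confined to a bounded range near the relevant poles.
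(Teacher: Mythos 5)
Your proof is correct and follows essentially the same route as the paper: the same decay/blow-up estimates for $\mathcal{R}(e^{w})$ coming from conditions (A)--(C), the same selection of radii $R_j$ that stay at distance $\gtrsim 1/R_j$ from the $O(R^2)$ potential poles in $\Gamma+W+2\pi i\Z$, and the same transfer of that gap to the $\zeta=e^{w}$ plane via $|e^{\zeta}-1|\geq C\,\mathrm{dist}(\zeta,2\pi i\Z)$. Your bookkeeping is in fact slightly sharper — summing the exponential tail and using the maximal pole order $d$ gives $O(R_j^{d})$, whereas the paper crudely multiplies the per-term bound $R^{q(\Ff)}$ by $\#\Gamma_2\leq CR$ to get $R^{q(\Ff)+1}$ — but this is a refinement of the identical argument, and either bound suffices against Jensen's formula.
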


Let us now check that the lemmas above  contradict to the classical Jensen formula for meromorphic functions (see e.g. \cite{MR1400006}, Ch. 2.4) 
    \begin{equation}\label{jen}\int_0^R\frac{n_0(t)-n_\infty(t)}{t}dt=\frac{1}{2\pi}\int_0^{2\pi}\log|f(Re^{i\theta})|d\theta-\log|f(0)|,\end{equation}where we assume that $f(z)\ne0,\infty$ on the circle $|z|=R$.

Indeed, by Lemma \ref{l01}, the left hand-side of the formula is larger than  $CR^2$ as $R\to\infty,$ while by Lemma \ref{l02}, the right-hand size has a logarithmic growth on a sequence $R=R_j\to\infty$. Therefore, to finish the proof of proposition it remains to prove these lemmas.

\begin{proof}[Proof of Lemma \ref{l01}]

We start with three claims:

Let us denote by $\re\, W$ the multiset $\re\,W:=\{\re\, z: z\in W\}$. Observe that \begin{equation}\label{rew}
    \#\re\,W=\# W=q(\Ff).
\end{equation}

Denote by $\tilde\La$ the multiset of points belonging to $\La$, where each point of $\tilde\Lambda$ has multiplicity $k(\Ff)$. We use the definition (\ref{lowed}) to define the lower density $D^-(\tilde\Lambda)$.
\begin{claim}\label{c1} We have $$D^+(\Gamma+\re\,W)=q(\Ff)D^+(\Gamma), \quad D^-(\tilde \Lambda)=k(\Ff)D^-(\Lambda).$$    
\end{claim}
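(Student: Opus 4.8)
The plan is to reduce both density statements to direct manipulations of counting functions, using only the translation invariance of $D^{\pm}$ and the fact that $\re\,W$ is a fixed finite multiset of cardinality $q(\Ff)$ (by~\eqref{rew}). Write $\re\,W=\{r_1,\dots,r_{q(\Ff)}\}$ with elements counted according to multiplicity, and set $r:=\max_i|r_i|$. For a multiset $S\subset\R$ let $N_S(x,R):=\#(S\cap[x-R,x+R])$, counted with multiplicity.

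The statement for $\tilde\Lambda$ is immediate. Since each point of $\Lambda$ occurs with multiplicity $k(\Ff)$ in $\tilde\Lambda$, we have $N_{\tilde\Lambda}(x,R)=k(\Ff)\,N_\Lambda(x,R)$ for all $x,R$; dividing by $2R$, taking $\inf_x$ and $\lim_{R\to\infty}$ then yields $D^-(\tilde\Lambda)=k(\Ff)\,D^-(\Lambda)$.

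For the sum-set $\Gamma+\re\,W=\bigsqcup_{i=1}^{q(\Ff)}(\Gamma+r_i)$ (a multiset), the counting function splits as
$$N_{\Gamma+\re\,W}(x,R)=\sum_{i=1}^{q(\Ff)}N_\Gamma(x-r_i,R).$$
For the upper bound I would estimate each summand by $\sup_y N_\Gamma(y,R)$, so that $N_{\Gamma+\re\,W}(x,R)\le q(\Ff)\sup_y N_\Gamma(y,R)$; dividing by $2R$, taking $\sup_x$ and then $\lim_R$, and using that the supremum is translation invariant, gives $D^+(\Gamma+\re\,W)\le q(\Ff)\,D^+(\Gamma)$. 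For the matching lower bound I would use the nested-interval observation that, whenever $R>r$, the interval $[x-r_i-R,\,x-r_i+R]$ contains $[x-(R-r),\,x+(R-r)]$ for every $i$; hence $N_\Gamma(x-r_i,R)\ge N_\Gamma(x,R-r)$ and $N_{\Gamma+\re\,W}(x,R)\ge q(\Ff)\,N_\Gamma(x,R-r)$. Taking $\sup_x$, dividing by $2R$, and writing $\frac{1}{2R}=\frac{R-r}{R}\cdot\frac{1}{2(R-r)}$, the prefactor $\frac{R-r}{R}\to1$ while the remaining factor tends to $D^+(\Gamma)$, so $D^+(\Gamma+\re\,W)\ge q(\Ff)\,D^+(\Gamma)$. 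Combining the two inequalities proves the claim (and incidentally shows the limit defining $D^+(\Gamma+\re\,W)$ exists).

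The only point requiring care — and what I expect to be the main, though mild, obstacle — is the lower bound: upper densities of general unions are only subadditive, so equality is not automatic, and one must exploit that the $q(\Ff)$ sets involved are all shifts of the single set $\Gamma$ by the bounded amounts $r_i$. The nested-interval trick handles exactly this, converting the differently centered counts into a common lower bound at a slightly shrunken radius $R-r$ whose loss becomes negligible as $R\to\infty$.
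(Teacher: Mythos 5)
Your proof is correct, and it supplies exactly the kind of direct counting argument the paper has in mind: the authors simply write ``We omit the simple proofs of Claims \ref{c1} and \ref{c2}.'' The multiplicity argument for $\tilde\Lambda$, the splitting of $\Gamma+\re\,W$ into the $q(\Ff)$ translates $\Gamma+r_i$, and the nested-interval comparison at radius $R-r$ to get the matching lower bound for $D^+$ are all sound, so nothing is missing.
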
 

\begin{claim}\label{c2}
    Let numbers $R>0$ and $a,b\in\Cc$ satisfy
    $$|\re\, a|\leq|\re \,b|<R.$$
Then $$\#\left((a+2\pi i \Z)\cap B_R\right)\geq \#\left((b+2\pi i \Z)\cap B_R\right)-1.$$\end{claim}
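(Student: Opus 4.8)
The plan is to reduce the claim to a purely one-dimensional statement about counting integer lattice points in a real interval, and then to observe that the correct counting bound loses only a single point.

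First I would parametrize membership in $B_R:=\{z\in\Cc:|z|\le R\}$. A point $a+2\pi i n$ lies in $B_R$ exactly when $(\re\,a)^2+({\rm Im}\,a+2\pi n)^2\le R^2$; since $|\re\,a|\le|\re\,b|<R$, this is equivalent to
$$|{\rm Im}\,a+2\pi n|\le\sqrt{R^2-(\re\,a)^2},$$
i.e. to $n$ lying in a closed interval $I_a\subset\R$ of length
$$L_a:=\frac1\pi\sqrt{R^2-(\re\,a)^2}>0.$$
Hence $\#\big((a+2\pi i\Z)\cap B_R\big)=\#(\Z\cap I_a)$, and likewise $\#\big((b+2\pi i\Z)\cap B_R\big)=\#(\Z\cap I_b)$, where $I_b$ has length $L_b:=\frac1\pi\sqrt{R^2-(\re\,b)^2}>0$. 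The hypothesis $|\re\,a|\le|\re\,b|$ gives $R^2-(\re\,a)^2\ge R^2-(\re\,b)^2$, so the essential inequality between the two configurations is simply $L_a\ge L_b$.

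Next I would invoke the elementary counting fact that a closed interval of length $L$ contains at least $\lfloor L\rfloor$ and at most $\lfloor L\rfloor+1$ integers: the integers in $[c,d]$ are $\lceil c\rceil,\dots,\lfloor d\rfloor$, whose number $\lfloor d\rfloor-\lceil c\rceil+1$ is an integer not exceeding $d-c+1=L+1$, hence at most $\lfloor L\rfloor+1$, while the $\lfloor L\rfloor$ consecutive integers $\lceil c\rceil,\dots,\lceil c\rceil+\lfloor L\rfloor-1$ all lie in $[c,d]$. Applying the lower bound to $I_a$ and the upper bound to $I_b$, and using monotonicity of the floor together with $L_a\ge L_b$, I obtain
$$\#(\Z\cap I_a)\ge\lfloor L_a\rfloor\ge\lfloor L_b\rfloor\ge\#(\Z\cap I_b)-1,$$
which is precisely the asserted inequality.

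The only point requiring care — and hence the step I would treat as the (mild) main obstacle — is that $\mathrm{Im}\,a$ and $\mathrm{Im}\,b$ are entirely unconstrained, so the intervals $I_a$ and $I_b$ are positioned independently along the line and cannot be aligned; the sole usable datum is $L_a\ge L_b$. A naive length comparison would only yield $\#(\Z\cap I_a)\ge L_a-1\ge L_b-1\ge\#(\Z\cap I_b)-2$, losing two lattice points. Phrasing the estimate through the floor function is exactly what recovers the sharp loss of a single point uniformly in the positions of the intervals, which is what the claim demands.
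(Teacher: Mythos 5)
Your proof is correct: the reduction of $\#\big((a+2\pi i\Z)\cap B_R\big)$ to counting integers in a closed interval of length $\frac1\pi\sqrt{R^2-(\re a)^2}$, combined with the sharp bounds $\lfloor L\rfloor\le\#(\Z\cap I)\le\lfloor L\rfloor+1$, gives exactly the stated inequality with the loss of a single point. The paper omits the proof of this claim as "simple," and your argument is precisely the elementary one intended, including the correct handling of the fact that the two intervals cannot be aligned.
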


\begin{claim}\label{c3}
    There is a positive number $\rho$ such that for all open intervals $I,J\subset\R$ of length $|I|=|J|=\rho$  we have
    $$\#\left(\tilde \Lambda\cap I\right)=k(\Ff)\#\left(\Lambda\cap I\right)\geq \# \left((\Gamma+\re\,W)\cap J\right)+k(\Ff).$$
\end{claim}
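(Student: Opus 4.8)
The plan is as follows. The asserted equality $\#(\tilde\Lambda\cap I)=k(\Ff)\#(\Lambda\cap I)$ is immediate, since by definition each point of $\Lambda$ is counted in $\tilde\Lambda$ with multiplicity $k(\Ff)$. The whole content of the claim is therefore the inequality
$$k(\Ff)\#(\Lambda\cap I)\geq \#\big((\Gamma+\re\,W)\cap J\big)+k(\Ff),$$
uniformly over all open intervals $I,J$ of a common length $\rho$. I would derive this from a comparison of Beurling densities. Combining Claim~\ref{c1} with the hypothesis \eqref{d_pm_cond} (after multiplying it through by $k(\Ff)$) gives the strict inequality
$$D^-(\tilde\Lambda)=k(\Ff)D^-(\Lambda)>q(\Ff)D^+(\Gamma)=D^+(\Gamma+\re\,W).$$
So it suffices to show, quite generally, that whenever two (relatively) separated multisets satisfy $D^-(\tilde\Lambda)>D^+(\Gamma+\re\,W)$, the count of $\tilde\Lambda$ in any long interval exceeds the count of $\Gamma+\re\,W$ in any other interval of the same length by any prescribed fixed amount.

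To make this precise I would insert a threshold, fixing $\delta_1,\delta_2$ with $D^-(\tilde\Lambda)>\delta_1>\delta_2>D^+(\Gamma+\re\,W)$. In the definition \eqref{lowed} the infimum (resp.\ supremum) over the center $x$ is taken inside the limit in $R$, so the resulting estimates are uniform in the location of the interval: there is an $R_0$ such that for every $R\geq R_0$ and every $x\in\R$,
$$\#\big(\tilde\Lambda\cap[x-R,x+R]\big)\geq 2\delta_1 R,\qquad \#\big((\Gamma+\re\,W)\cap[x-R,x+R]\big)\leq 2\delta_2 R.$$
Since any interval of length $\rho=2R$ is the centered interval about its own midpoint, these two bounds apply directly to the intervals $I$ and $J$.

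Finally I would assemble the pieces and absorb the additive constant $k(\Ff)$. Writing $\rho=2R$, the open intervals $I,J$ are $(x-R,x+R)$ about their respective midpoints, and passing from the closed centered intervals above to these open ones changes the counts only at the two endpoints; since $\Lambda$ is separated this costs at most $2k(\Ff)$ points of $\tilde\Lambda$, while on the $\Gamma+\re\,W$ side an open interval contains no more points than its closure, so no correction is needed there. Hence
$$\#(\tilde\Lambda\cap I)-\#\big((\Gamma+\re\,W)\cap J\big)\geq 2\delta_1 R-2k(\Ff)-2\delta_2 R=(\delta_1-\delta_2)\rho-2k(\Ff).$$
As $\delta_1>\delta_2$, the right-hand side grows linearly in $\rho$, so fixing any $\rho\geq 2R_0$ with $(\delta_1-\delta_2)\rho\geq 3k(\Ff)$ yields $\#(\tilde\Lambda\cap I)\geq \#\big((\Gamma+\re\,W)\cap J\big)+k(\Ff)$, which is exactly the claim. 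I expect no deep obstacle: the argument is a routine density comparison, and the only genuine care lies in the bookkeeping — keeping the direction of the open/closed endpoint corrections correct, and using that the order of $\inf_x$ and $\lim_{R\to\infty}$ in \eqref{lowed} is what makes the bounds uniform in the positions of $I$ and $J$, so that a single $\rho$ works simultaneously for all such intervals.
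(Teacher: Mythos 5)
Your proposal is correct and follows exactly the route the paper intends: the paper dismisses Claim~\ref{c3} as ``a simple consequence of Claim~\ref{c1}, \eqref{rew} and \eqref{d_pm_cond}'', and your argument --- inserting thresholds $\delta_1>\delta_2$ between $D^-(\tilde\Lambda)=k(\Ff)D^-(\Lambda)$ and $D^+(\Gamma+\re\,W)=q(\Ff)D^+(\Gamma)$, using the uniformity in $x$ built into the density definitions, and letting the linear gap $(\delta_1-\delta_2)\rho$ absorb the endpoint corrections and the additive constant $k(\Ff)$ --- is precisely the omitted routine verification.
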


    We omit the simple proofs of Claims \ref{c1} and \ref{c2}. The last claim is a simple consequence of Claim \ref{c1}, \eqref{rew} and \eqref{d_pm_cond}.

    Now, choose a number $r>0$ such that $|\gamma +\re\,e^w|\geq  \rho$,  for every $\gamma\in\Gamma, |\gamma|\geq r$ and  every $w\in W$, where $\rho$ is the number in Claim \ref{c3}. Set $\Gamma':=\Gamma\setminus (-r,r).$
By Claim \ref{c3} we can find a subset $\Lambda'\subset\Lambda$ such that there is a bijection 
$$T:\Lambda'+\frac{2\pi i}{k(\Ff)} Z\to \Gamma'+ W$$ satisfying $|\re\, T(u)|\geq |\re\, u|$ and such that \begin{equation}\label{e00}\#\left((\Lambda\setminus\Lambda')\cap (j\rho,(j+1)\rho)\right)\geq 1,\quad j\in\Z. 
\end{equation}    
Since the set $\Gamma\setminus\Gamma'$ is finite, it follows that $$\#\left(((\Gamma\setminus\Gamma')+W+2\pi i \Z)\cap B_R\right)\leq CR.$$Hence, by Claim \ref{c2} we get the estimate
$$\#\left((\Lambda'+\frac{2\pi i}{k(\Ff)} Z+2\pi i \Z)\cap B_R\right)\geq \#\left((\Gamma+W+2\pi i \Z)\cap B_R\right)-CR. $$On the other hand, by \eqref{e00}  for all large enough $R$ one  gets the estimate  
$$\#\left((\Lambda+\frac{2\pi i}{k(\Ff)} Z+2\pi i \Z)\cap B_R\right)\geq \#\left((\Lambda'+\frac{2\pi i}{k(\Ff)} Z+2\pi i \Z)\cap B_R\right)+CR^2,$$
where the constant $C$ depends on $\rho.$ This finishes the proof of Lemma \ref{l01}.
\end{proof}

\begin{proof}[Proof of Lemma \ref{l02}] Recall that $\mathcal{R}=P/Q$ satisfies conditions (A) - (C) in  Definition~\ref{admis_def}. It follows that there exist constants $C,L>0$ such that
\begin{equation}\label{e03}|\mathcal{R}(z)|\leq C|z|,\quad |z|\leq e^{-L} \ \  \mbox{and }  \ |\mathcal{R}(z)|\leq \frac{C}{|z|},\quad |z|\geq e^{L}.\end{equation}One can also easily check that
\begin{equation}\label{e0}|\mathcal{R}(z)|\leq\frac{C}{(\mbox{dist}(z,e^W))^{q(\Ff)}},\quad e^{-L}<|z|<e^L.\end{equation}
where $e^W$ defined in \eqref{eW_def}.

 Let $f$ be defined in \eqref{efunc}.  It is easy to see that there is a constant $C$ such that $$\#\left(\mbox{Pol}(f)\cap B_R\right)\leq CR^2,\quad R\geq 1.$$Therefore, there is a sequence $R=R_j\to\infty$ such that \begin{equation}\label{e04}\mbox{dist}(\mbox{Pol}(f),\partial B_R)\geq \frac{C}{R},\quad \partial B_R:=\{z\in\Cc: |z|=R\}.\end{equation}

We now fix such a number $R$ and split $\Gamma$ into two sets: $$\Gamma_1:=\{\gamma\in\Gamma: |\gamma|\geq R+L\},\,\, \Gamma_2:=\Gamma\cap (-R-L,R+L).$$ By \eqref{e03}, $$|\mathcal{R}(e^{z-\gamma})|\leq Ce^{-|x-\gamma|},\quad \gamma\in\Gamma_1, \ z=x+iy, \ |z|=R. $$ Since $\Gamma$ is a separated set, this gives
$$\sup_{|z|=R}\sum_{\gamma\in\Gamma_1}\left|c_\gamma \mathcal{R}(e^{z-\gamma})\right|\leq C\|\cc\|_\infty\sup_{x\in\R}\sum_{\gamma\in\Gamma_1}e^{-|x-\gamma|}\leq C,$$where the last constant does not depend on $R$.

Further,  by \eqref{e03} and \eqref{e0}, for every $z,|z|=R$ and $\gamma\in\Gamma_2,$
$$|\mathcal{R}(e^{z-\gamma})|\leq \sup_{|z|=R,w\in W}\frac{C}{|e^{z-\gamma}-e^w|^{q(\Ff)}}+C\leq \sup_{|z|=R,w\in W}\frac{C}{|e^{z-w-\gamma}-1|^{q(\Ff)}}.$$

We now use a simple inequality $$|e^\zeta-1|\geq C\delta,\quad \mbox{dist}(\zeta, 2\pi i\Z)\geq \delta.$$
   It is clear that $\# \Gamma_2\leq CR.$ Hence, the last inequality and  \eqref{e04} imply
   \begin{equation}\label{esimi}\sum_{\gamma\in\Gamma_2}\left|c_\gamma \mathcal{R}(e^{z-\gamma})\right|\leq C \|\cc\|_\infty\#\Gamma_2R^{q(\Ff)} =C R^{q(\Ff)+1}.\end{equation}
This finishes the proof of Lemma~\ref{l02}.
\end{proof}

\medskip

\subsection{Proof of Proposition~\ref{uniq_prop}, Part {\rm(II)}}

The proof is   similar to the proof of Part (I). 
However, recall that in this case, we do not exclude the option ${\rm deg}\,P \geq {\rm deg}\,Q.$

For simplicity, we assume that the shape parameter $\alpha=1$. The proof in the general case is similar.

We argue by contradiction and assume that there is a non-trivial function $$f(z)=\sum_{n\in\Z}c_n \Ff(z-n)=\sum_{n\in\Z}c_n e^{-(z-n)^2/2}\mathcal{R}(e^{z-n})
\in V^\infty_\Z(\Ff),\ \cc=\{c_n\}\in l^\infty(\Z),$$which vanishes on $\Lambda$. We have to show that this implies a contradiction.

As in Lemma \ref{l02} above, let $n_0(t)$ and $n_\infty(t)$ denote the number of zeros and poles of $f$ in $\{z\in\Cc:|z|=t\}$, respectively.

\begin{lemma}\label{l47}  We have

(i) $n_0(t)\geq n_\infty (t)+(1+C)t^2/2$, for some $C>0$ and all large enough $t.$

(ii) There is sequence $R=R_j\to\infty$ such that $$\log|f(z)|\leq y^2/2 +C\log R,\quad z=x+iy, |z|=R.$$
    
\end{lemma}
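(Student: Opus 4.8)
The plan is to mirror the proof of Part (I) as closely as possible, replacing the quadratic lower bound on $n_0(t)-n_\infty(t)$ by one with the correct leading constant and upgrading the growth estimate of Lemma \ref{l02} to account for the Gaussian factor $e^{-(z-n)^2/2}$. As before, I would first reduce to $\alpha=1$ and assume, after a real shift, that $f(0)\neq0,\infty$; then I use the $2\pi i$-periodicity of $\mathcal{R}(e^{z})$ to locate $\mbox{Pol}(f)\subseteq \Z+W+2\pi i\Z$, where $W$ is as in \eqref{eW_def}. The key structural difference from Part (I) is that the generator now carries the entire factor $e^{-(z-n)^2/2}$; since $|e^{-(z-n)^2/2}|=e^{-(x-n)^2/2+y^2/2}$, the modulus of $f$ grows like $e^{y^2/2}$ in the imaginary direction, which is exactly the source of the $y^2/2$ term in part (ii) of Lemma \ref{l47}.

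For part (i), I would count zeros coming from the vanishing of $f$ on $\Lambda$ together with its $2\pi i\Z$-translates (note that here $k(\Ff)=1$, so no extra $(2\pi i/k)Z$-shifts are available, unlike in Part (I)). The poles lie in $\Z+W+2\pi i\Z$, and by Claim \ref{c1}-type counting the pole contribution inside $B_t$ grows like $q(\Ff)\,t^2/2$ (a disk of radius $t$ meets $O(t^2)$ of the horizontal lines $\mbox{Im}=2\pi k$, each carrying the $\Z+\re W$ pattern). The zero contribution from $\Lambda+2\pi i\Z$ grows like $D^-(\Lambda)\,t^2/2 > (q(\Ff)+1)\,t^2/2$ by \eqref{e010}. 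Subtracting gives a surplus of at least $(1+C)t^2/2$ over the poles, which is the asserted bound; crucially this uses the strict inequality in \eqref{e010} to produce the extra $+t^2/2$ and a genuine $Ct^2/2$ slack. I would set this up through the same bijection/Claim machinery as in Lemma \ref{l01}, adjusted to single (rather than $k$-fold) multiplicities.

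For part (ii), I would bound $|f(z)|$ on circles $|z|=R_j$ chosen to stay a distance $\gtrsim 1/R$ from $\mbox{Pol}(f)$, exactly as in \eqref{e04}. Splitting $\Z$ into a far part $\Z_1=\{|n|\geq R+L\}$ and a near part $\Z_2=\Z\cap(-R-L,R+L)$, the far-field terms are controlled by the exponential decay $|\mathcal{R}(e^{z-n})|\leq Ce^{-|x-n|}$ from \eqref{e03}, combined with the Gaussian factor $|e^{-(z-n)^2/2}|=e^{-(x-n)^2/2+y^2/2}$, giving a bound $Ce^{y^2/2}$ uniformly. For the near terms, the Gaussian factor again contributes the $e^{y^2/2}$, while the rational factor is controlled by $|e^{z-w-n}-1|^{-q(\Ff)}\gtrsim (R\cdot\mbox{dist})^{-q(\Ff)}$ as in \eqref{esimi}; since $\#\Z_2\leq CR$, this yields $|f(z)|\leq Ce^{y^2/2}R^{q(\Ff)+1}$, so $\log|f(z)|\leq y^2/2+C\log R$. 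I expect the main obstacle to be bookkeeping the constants so that the strict inequality \eqref{e010} truly produces the decisive $(1+C)t^2/2$ in (i): one must verify that the $y^2/2$ growth in (ii), once fed into the right-hand side of Jensen's formula \eqref{jen} (where $\frac{1}{2\pi}\int_0^{2\pi}\log|f(Re^{i\theta})|\,d\theta$ picks up $\frac{1}{2\pi}\int_0^{2\pi}(R\sin\theta)^2/2\,d\theta=R^2/4$), is strictly dominated by the left-hand side $\int_0^R \frac{n_0-n_\infty}{t}\,dt\geq (1+C)R^2/4$. Matching these two $R^2$-coefficients — and checking that the Gaussian contribution to Jensen's integral is exactly $R^2/4$ rather than $R^2/2$ — is the delicate point that forces the threshold $q(\Ff)+1$ and the strict inequality.
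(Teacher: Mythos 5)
Your overall strategy coincides with the paper's: the same far/near splitting of $\Z$, the same choice of circles $|z|=R_j$ kept at distance $\gtrsim 1/R$ from the poles, the same $e^{y^2/2}$ bookkeeping feeding $R^2/4$ into the right-hand side of Jensen's formula, and for part (i) a planar count of ${\rm Zer}(f)\supseteq\La+2\pi i\Z$ against ${\rm Pol}(f)\subseteq\Z+W+2\pi i\Z$. (The paper organizes (i) slightly differently, splitting $\La=\La_1\cup\La_2$ with $D^-(\La_1)>q(\Ff)$ and $D^-(\La_2)>1$, treating $\La_1$ by the machinery of Lemma~\ref{l01} and $\La_2$ by Landau's translation-invariance of planar density; your direct comparison of the densities $D^-(\La)/2\pi$ and $q(\Ff)/2\pi$ amounts to the same thing and requires the same uniformity-in-position input, so this is only a difference of presentation.)

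There are, however, two points where your write-up as stated does not go through. First, you control the far-field terms by the decay $|\mathcal R(e^{z-n})|\leq Ce^{-|x-n|}$ from \eqref{e03}. That estimate is a consequence of conditions (A)--(B) of Definition~\ref{admis_def}, which are \emph{not} imposed on the class $\kk(\alpha)$: there ${\rm deg}\,P\geq{\rm deg}\,Q$ and $P(0)\neq0$ are allowed (the pure Gaussian, $\mathcal R\equiv1$, already violates \eqref{e03}), and the paper explicitly warns about this at the start of the proof of Part (II). The correct substitute is the polynomial bound $|\mathcal R(z)|\leq C(1+|z|^{k})$ off a fixed annulus, after which the Gaussian factor absorbs the growth via $e^{-(x-n)^2/2}e^{k(x-n)}\leq e^{k^2/2}e^{-(x-n-k)^2/2}$; the far sum is then still $O(e^{y^2/2})$. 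Second, the inclusion ${\rm Zer}(f)\supseteq\La+2\pi i\Z$, which you assert, needs a justification: $f$ is no longer $2\pi i$-periodic because of the Gaussian factor, so one must invoke the quasi-periodicity $f(z+2\pi i)=e^{-2\pi iz+2\pi^2}f(z)$, whose multiplier is zero-free, to propagate the zeros on $\La$ (and the pole pattern) along $2\pi i\Z$. Both repairs are short and are exactly what the paper does; with them your argument matches the paper's proof.
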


\begin{proof}
Instead of conditions \eqref{e03}  and \eqref{e0}, we now have the conditions
$$|\mathcal R(z)|\leq C(1+|z|^k),\quad |z|<e^{-L}, \ |z|>e^L$$ and 
$$|\mathcal{R}(z)|\leq\frac{C}{(\mbox{dist}(z,e^W))^{q(\Ff)}},\quad e^{-L}<|z|<e^L,$$for some $k\in\N$ and $C,L>0.$

Let  $R_j\to\infty$ be a sequence satisfying  condition \eqref{e04}.  Fix an element $R=R_j$ and set $Z_1:=\{n\in\Z: |n|\geq R+L\}$ and $Z_2:=\Z\cap (-R-L,R+L)$. Using the first inequality above, we get
$$\sum_{n\in Z_1}\left|c_n e^{-(z-n)^2/2}\mathcal R(e^{z-n})\right|\leq Ce^{y^2/2}\sum_{n\in Z_1}e^{-(x-n)^2/2}\left(1+e^{k(x-n)}\right)\leq Ce^{(y^2+k^2)/2}=Ce^{y^2/2}.$$
Next, by the second inequality above, similarly to  \eqref{esimi}, we get 
$$\sum_{n\in Z_2}\left|c_n e^{-(z-n)^2/2}\mathcal R(e^{z-n})\right|\leq C\|\cc\|_\infty e^{y^2/2}R^{q(\Ff)}\sum_{n\in Z_2}e^{-(x-n)^2/2}
\leq CR^{q(\Ff)+1}e^{y^2/2}.$$

We conclude that for every $f\in V_\Z^\infty(\Ff)$ we have
\begin{equation}\label{e05}|f(x+iy)|\leq CR^{q(\Ff)+1}e^{y^2/2}, \quad |z|=R_j,\end{equation} where $C$ depends only on $f.$
This proves part (ii) of Lemma \ref{l47}.

Given $\Ff\in\kk(1)$ and $f \in V_{\Z}^{\infty}(\Ff)$, observe that
$$f(z+2\pi i )=\sum_{n\in\Z}c_ne^{-(z+2\pi i -n)^2/2}\mathcal R(e^{z+2\pi i -n})=e^{-2\pi i z +2\pi^2}f(z).$$ Hence, since $f(\lambda)=0,\lambda\in\La,$ then $f$ vanishes on the set $\La+2\pi i \Z.$
Therefore, similarly to  Section \ref{s41}, we have $$\mbox{Pol}(f)\subseteq \Z+W+2\pi i\Z,\quad \mbox{Zer}(f)\supseteq \La+2\pi i \Z.$$

By estimate \eqref{e010}, we may split $\La$ into two sets, $\La=\La_1\cup\La_2$ satisfying $D^-(\La_1)>q(\Ff)$ and $D^-(\La_2)>1.$ Then  $$n_0(t)\geq n_1(t)+n_2(t):=\# (\La_1+2\pi i \Z)\cap B_t+\# (\La_2+2\pi i \Z)\cap B_t.$$ 
As in the proof of Lemma \ref{l01}, we have 
$$n_1(t)\geq n_\infty(t)+ Ct^2,\quad \mbox{for some } C>0.$$

To prove part (i) of Lemma \ref{l47}, it suffices to prove  the following 
\begin{claim}\label{cla}
We have   $$n_2(t)\geq t^2/2 ,\quad \mbox{for all large enough } t.$$
\end{claim}

Given a convex set $I\subset\Cc, 0\in I,$ and a discrete set of points $\Delta\subset\Cc$, consider the  density of $\Delta$ defined as $$d(I,\Delta):=\lim\inf_{r\to\infty}\frac{\#\Delta\cap(rI)}{r^2|I|},$$where $|I|$ denotes the 2D-measure (area) of $I$. If $d(I,\Delta)>0$, then for every $\epsilon>0$ we have \begin{equation}\label{edelta}\#\Delta\cap rI\geq (1-\epsilon)r^2|I|d(I,\Delta),\end{equation}
for all sufficiently large $r$.

Assume that $I$ is the square $$I=\{z=x+iy\in\Cc: \max\{|x|,|y|\leq 1\}\}.$$ It is easy to check that 
$$d(I, \La_2+2\pi i \Z)\geq D^-(\La_2)/2\pi.$$
However, it is well-knwon that the density $d(I,\Delta)$ does not depend on the choice of $I$, see e.g. \cite[Lemma~4]{la67}.
Since $D^-(\La_2)>1,$ Claim \ref{cla} follows from \eqref{edelta}. \end{proof}

Again, to arrive at contradiction, we use Jensen's formula~\eqref{jen} for meromorphic functions. From Lemma~\ref{l47}, part (ii), we see that the right hand-side of \eqref{jen} is bounded above by
$$
\frac{1}{2\pi}\int_0^{2\pi}\log|f(Re^{i\theta})|d\theta \le \frac{R^2}{4\pi}\int_0^{2\pi} \sin^2(\theta) d \theta + C \log R  = \frac{R^2}{4} + C \log R, R=R_j\to\infty.
$$
On the other hand, part (i) of Lemma~\ref{l47} shows that the left hand-side of \eqref{jen} is larger than $(1+C)R^2/4, C>0,$ for all large enough $R$, which is a contradiction. This finishes the proof of Proposition~\ref{uniq_prop}.

\section{Non-uniqueness Sets}\label{non_uniq_sec}

In this section, we study zero sets of functions from shift-invariant spaces $V^{\infty}_{\Z}(\Ff)$ generated by $\Ff \in \kk(\alpha)$ or $\Ff \in \Kk(\alpha)$. More precisely, we build functions from these shift-invariant spaces that vanish on sets of critical density.

Let us start with 
\begin{proof}[Proof  of Theorem~\ref{sharp_thm}]

Given $N \in \N$, $\alpha>0$, and $0< b_1 < \dots < b_N$ such that
\begin{equation}\label{poles_indep}
     b_j - b_k \notin \Z, \quad \text{for every  } \,\, j \neq k.  
\end{equation}
It suffices to find a separated set $\Lambda$ satisfying $D^{-}(\Lambda) = N$, coefficients $\{a_j\}_{j=1}^N \subset \R$, and a non-trivial function $f$ from $V^{\infty}_{\Z}(\Ff)$ such that $f$ vanishes on $\Lambda$, where 
$$
\Ff(x) = \sum\limits_{j=1}^N \frac{a_j e^{\alpha (x+b_j)}}{e^{2 \alpha x} + e^{2 \alpha b_j}}.
$$

We will distinguish the cases where $N$ is an even or an odd positive integer.

\medskip\noindent {\bf Case 1.} Assume that $N\in 2\N.$ Consider the functions
$$\varphi_j(x)=\sum_{n\in\Z}\frac{e^{\alpha (x+b_j-n)}}{e^{2\alpha (x-n)}+e^{2 \alpha b_j}},\quad j=1,...,N.$$Clearly, these functions are $1$-periodic. Observe that they are also linearly independent, since it follows from~\eqref{poles_indep} that they have different poles. Therefore, we can find $N$ points $0<x_1<...<x_N<1$ such that the system of $N$ equations
\begin{equation}\label{equ}\sum_{j=1}^N a_j\varphi_j(x_l)=(-1)^l,\quad l=1,...,N,\end{equation}has a
real solution $a_1,...,a_N.$ This means that the function
\begin{equation}\label{ef}f(x):=\sum_{j=1}^N a_j \varphi_j(x)\in V_{\Z}^\infty(\Ff)\end{equation}has at least $N-1$ sign changes on the interval $(0,1).$ However, since $f$ is also $1$-periodic, it either vanishes at $0$ or has an even number of sign changes on $(0,1)$, and so $f$ has at least $N$ distinct zeros on $[0,1).$ We see that ${\rm Zer}\,(f)$ contains a $1$-periodic  set of density $N$.
This finishes the proof for the case $N\in 2\N$.

\begin{figure}[!ht]
\centering
\includegraphics[height=8cm]{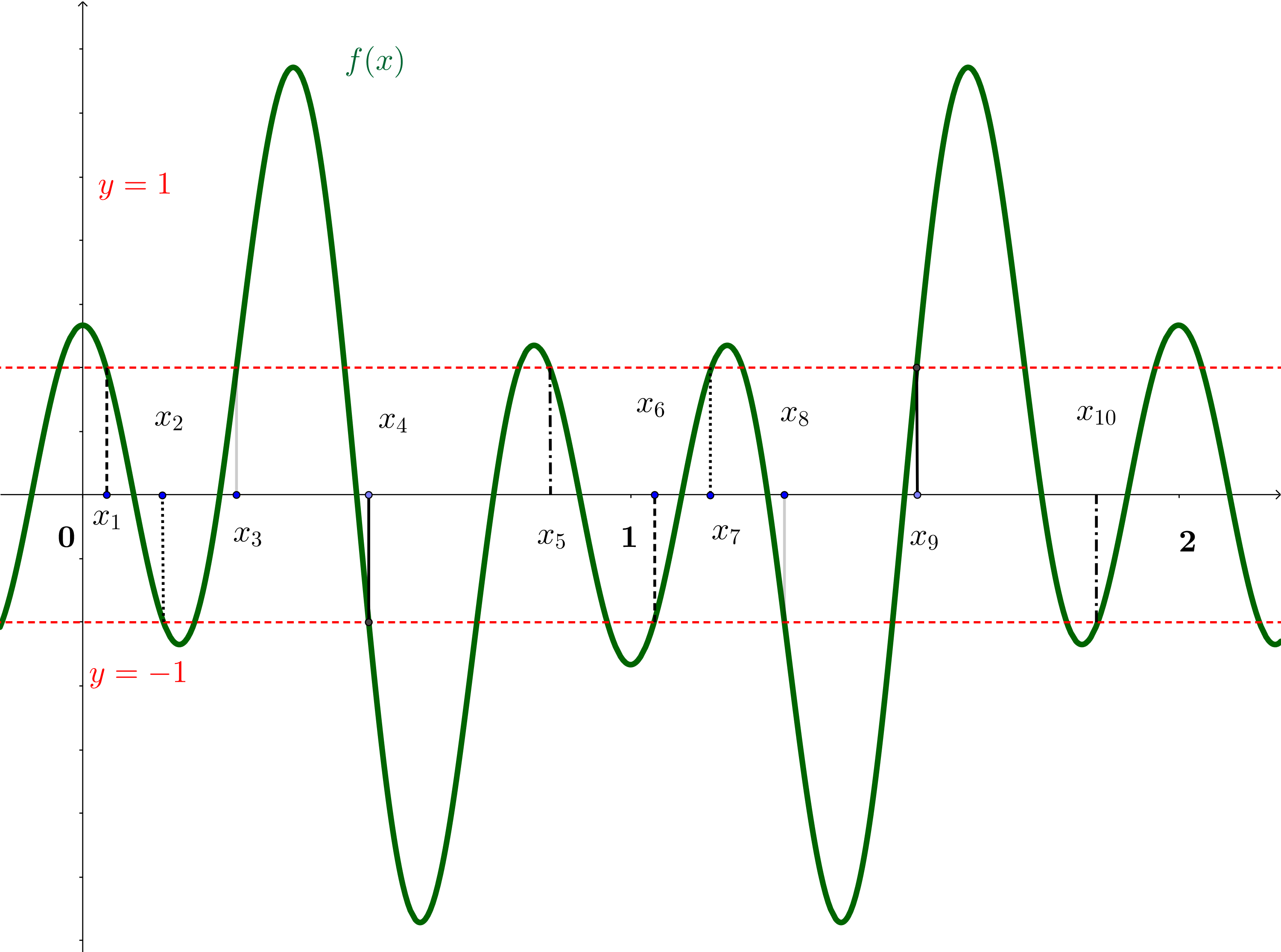}
\caption{Case 2 for $N=5$}
\label{img1}
\end{figure}

\medskip\noindent {\bf Case 2.} Assume that $N\in 2\N+1.$ Consider the functions
$$\varphi_j(x)=\sum_{n\in\Z}(-1)^n\frac{e^{\alpha (x+b_j-n)}}{e^{2 \alpha (x-n)}+e^{2b_j}},\quad j=1,...,N.$$ 
Note that $\{\varphi_j\}_{j=1}^N$ are linearly independent,  $2$-periodic, and $\varphi_j(x+1)=-\varphi_j(x)$.

Similarly to Case 1, we define  $f$ by
\begin{equation}\label{ef_psi}f(x):=\sum_{j=1}^N a_j \varphi_j(x) \end{equation}
and  find $N$ points $0<x_1<...<x_N<1$ such that \eqref{equ} has a real 
solution $a_1,...,a_N.$ Clearly, $f \in V_{\Z}^{\infty}(\Ff).$

Set $x_{N+j}:=1+x_j,j=1,...,N.$ Since $N\in 2\N+1$ and $f(x_{N+j})=f(1+x_j)=-f(x_j)$, we see that $f$ must have at least one sign change (and therefore a zero) on each interval $(x_j,x_{j+1}), j=1,..., 2N-1,$ see Figure~\ref{img1}. This means that $f$ has at least $2N-1$ sign changes  on $(x_1,x_{2N})$. However, since $f$ is $2$-periodic, it either vanishes at $0$ or  has an even number of sign changes on $[0,2).$ Therefore, it has at least $2N$ different zeros on $[0,2).$ Since $f$ is $2$-periodic, we see that ${\rm Zer}\,(f)$ contains a set of density $N$. 
This finishes the proof of Theorem~\ref{sharp_thm}.    
\end{proof}

Next, we prove a similar statement for the  generators of the form
  \begin{equation}\label{fgene}\Ff(x)= e^{-\alpha x^2/2} \left( a_0 + \sum_{j=1}^N \frac{a_j}{e^{\alpha x}+e^{\alpha b_j}} \right),\quad a_0,a_1,...,a_N\in\R.\end{equation}
Clearly, $\Ff\in\kk(\alpha)$. 

\begin{proposition}\label{prop_uG}
    Given $N \in \N, \alpha > 0$, and $0 < b_1 < \dots < b_N,$ such that    
    \begin{equation}\label{bj_independent_2}
         b_j - b_k \notin \Z \quad \text{for every  } j,k = 1, \dots, N, j \neq k.  
    \end{equation}
       There exist a separated set $\Lambda, D^{-}(\Lambda) = N+1$, coefficients $a_0,...,a_N\in \R$, and a non-trivial function $f\in V^{\infty}_{\Z}(\Ff),$ where $\Ff$ is defined in \eqref{fgene}, such that $f$ vanishes on $\Lambda$.
\end{proposition}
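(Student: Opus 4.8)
The plan is to adapt the proof of Theorem~\ref{sharp_thm} to the Gaussian setting. The crucial new feature is that a generator in $\kk(\alpha)$ may carry a constant term $a_0$ in $\mathcal{R}$ (condition (A) in Definition~\ref{admis_def}, which forbids this, is absent here because the Gaussian factor already guarantees decay); this constant term supplies one extra linearly independent building block, and it is precisely what produces the ``$+1$'' in the critical density $N+1=q(\Ff)+1$.

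Concretely, I would look for $f\in V^\infty_\Z(\Ff)$ with coefficient sequence $c_n\in\{1,(-1)^n\}$ chosen according to the parity of $N$, so that
$$ f(x)=\sum_{n\in\Z}c_n\Ff(x-n)=a_0\Phi_0(x)+\sum_{j=1}^N a_j\Phi_j(x), $$
where $\Phi_0(x):=\sum_n c_n e^{-\alpha(x-n)^2/2}$ and $\Phi_j(x):=\sum_n c_n e^{-\alpha(x-n)^2/2}\big(e^{\alpha(x-n)}+e^{\alpha b_j}\big)^{-1}$. After reindexing the sum one checks that $f(x+1)=f(x)$ when $c_n=1$ and $f(x+1)=-f(x)$ when $c_n=(-1)^n$, so $f$ is real-valued on $\R$ and (anti)periodic, and each $\Phi_j$ inherits this. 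I would then show that $\Phi_0,\dots,\Phi_N$ are linearly independent: $\Phi_0$ is entire, while for $j\ge1$ the function $\Phi_j$ has poles at $x=n+b_j+i\pi(2k+1)/\alpha$, and by the hypothesis \eqref{bj_independent_2} the numbers $b_j \bmod 1$ are pairwise distinct, so these pole sets are disjoint and distinct from that of $\Phi_0$; a nontrivial linear relation is therefore impossible.

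Using linear independence I would select $N+1$ points $0\le x_0<\dots<x_N<1$ at which the $(N+1)\times(N+1)$ real matrix $[\Phi_j(x_l)]$ is invertible and solve $\sum_{j=0}^N a_j\Phi_j(x_l)=(-1)^l$ for real coefficients, making $f$ nontrivial with $f(x_l)=(-1)^l$. Then I would run the sign-change count as in Theorem~\ref{sharp_thm}, splitting on the parity of $N$. When $N$ is odd I take $c_n=1$: the $N+1$ alternating values, read around the circle $\R/\Z$, force $N+1$ sign changes, hence at least $N+1$ zeros of $f$ per period, i.e. density $\ge N+1$. When $N$ is even I take $c_n=(-1)^n$: the values $f(x_l)=(-1)^l$ together with $f(x_l+1)=(-1)^{l+1}$ give a fully alternating sequence at the $2N+2$ points $x_0,\dots,x_N,x_0+1,\dots,x_N+1\subset[0,2)$, producing at least $2N+1$ sign changes; since $f$ is $2$-periodic its number of sign changes over a full period is even, forcing at least $2N+2$ zeros in $[0,2)$, again density $\ge N+1$. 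As the real zeros of the nontrivial meromorphic $f$ are isolated and periodic, they form a separated set, and passing to a periodic subset yields $\Lambda$ with $D^-(\Lambda)=N+1$ on which $f$ vanishes.

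The step requiring the most care is the interpolation/independence step, and in particular guaranteeing that the solution $(a_0,\dots,a_N)$ can be taken with $a_1,\dots,a_N$ all nonzero, so that $Q(z)=\prod_{j}(z+e^{\alpha b_j})$ really has degree $N$ and (by the distinctness of the $b_j$, which makes $P(-e^{\alpha b_k})=a_k\prod_{j\ne k}(e^{\alpha b_j}-e^{\alpha b_k})\neq0$) is automatically coprime to the numerator; this gives a genuine $\Ff\in\kk(\alpha)$ with $q(\Ff)=N$, hence the sharp density $N+1$. I would secure this by a genericity argument: on the open set of configurations $(x_0,\dots,x_N)$ with $\det[\Phi_j(x_l)]\neq0$ the coefficients depend real-analytically on the points, and none of the $a_j$ can vanish identically (else $\Phi_j$ would be redundant, contradicting linear independence), so a generic admissible configuration makes all $a_j\neq0$.
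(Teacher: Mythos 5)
Your proposal is correct and follows essentially the same route as the paper's: the same (anti)periodized building blocks $\psi_0,\dots,\psi_N$, linear independence via the disjoint pole sets guaranteed by \eqref{bj_independent_2}, interpolation of the alternating values $(-1)^l$ at $N+1$ points in $[0,1)$, and the parity-dependent sign-change count transplanted from Theorem~\ref{sharp_thm} (anti-periodic coefficients $(-1)^n$ for $N$ even, periodic for $N$ odd). You additionally work out both parities explicitly and add a genericity argument ensuring $a_1,\dots,a_N\neq 0$ so that $q(\Ff)=N$ --- a point the paper leaves implicit but which matters for the downstream sharpness claim in Theorem~\ref{sampling_thm2}(ii).
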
 

\begin{proof}
    The argument follows the proof of Theorem~\ref{sharp_thm}. Again, we consider  the cases  $N$ is  even and  odd integer separately.
    
    Let us  assume that $N \in 2\N$ and sketch the proof leaving the details to the reader. 
    The proof of the second  case is also left to the reader.
    Set
    $$
    \psi_0(x) = \sum_{n \in \Z}(-1)^n e^{-\alpha (x-n)^2/2}, \quad
    \psi_j(x):= \sum\limits_{n \in \Z} (-1)^n \frac{e^{-\alpha (x-n)^2/2}}{e^{\alpha (x-n)}+e^{\alpha b_j}}, \,\, j =1, \dots, N.
    $$ Note that for every $j$ the function $\psi_j$ is $2$-periodic and $\psi_j(x+1) = - \psi_j(x).$
    Using the linear independence of the system $\{\psi_i\}_0^N$ that follows from \eqref{bj_independent_2}, we can find $N+1$ points  $0<x_0 < \dots < x_{N} < 1,$ such that 
    the system of $N+1$ equations 
    $$
    \sum\limits_{j=0}^N a_j \psi_j(x_l) = (-1)^l, \quad l = 0,\dots, N.
    $$
    has a real solution $a_0, \dots, a_N.$
    Therefore, the function 
    $$
    f(x) = \sum\limits_{j=0}^{N} a_j \psi_j(x), \quad f\in V^{\infty}_{\Z}(\Ff),
    $$
    has the same alternating properties as the function $f$ defined in~\eqref{ef_psi}. The rest of the proof is  similar to the proof of  Case 2 above. 
\end{proof}

\section{Proofs of Sampling and Interpolation Theorems}\label{s6}

\subsection{Proof of Theorem~\ref{sampling_thm1}}

We split the proof into two steps.

The proof does not depend on $\alpha$, so we set $\alpha = 1$.

{\noindent\bf Step 1.} We start with proving that $\Lambda$ is a sampling set for $V^{\infty}_{\Gamma}(\Ff).$ 
The proof is by contradiction.

 Let us assume that condition (\ref{d_pm_cond}) is satisfied and that $\Lambda$ is not a sampling set for $V^{\infty}_{\Gamma}(\Ff).$ 
Hence, for every integer $n$ there exist functions such that
$$f_n(x) = \sum\limits_{\gamma \in \Gamma} c^{(n)}_{\gamma} \Ff(x-\gamma), \quad f_n \in V^{\infty}_{\Gamma}(\Ff), \quad \|\cc^{(n)}\|_{\infty} = 1,$$ and $\gamma_n \in \Gamma$ such that
$|c^{(n)}_{\gamma_n}| > 1/2$ and
$$
\|f\big|_{\Lambda}\|_{\infty} = \sup\limits_{\lambda \in \La} \left| \sum\limits_{\gamma \in \Gamma} c^{(n)}_{\gamma} \Ff(\lambda - \gamma)  \right| < \frac{1}{n}.
$$

Using Beurling's technique, similarly to the proof of Lemma \ref{lstab}, one can find 
non-empty sets  $\Lambda'\in W(\Lambda)$, $\Gamma'\in W(\Gamma)$ and 
non-trivial coefficients ${\bf d}=\{d_{\gamma'}\}\in l^\infty(\Gamma')$ such that the function
$$h(x):=\sum_{\gamma'\in\Gamma'}d_{\gamma'}\Ff(x-\gamma')$$vanishes on the set $\Lambda'.$
Using~\eqref{upperlow} and Proposition~\ref{uniq_prop}, we arrive at a contradiction.
Therefore $\Lambda$ is a sampling set for $V^{\infty}_{\Gamma}(\Ff).$

{\noindent \bf Step 2.}
We show that if $\Lambda$ is a sampling set for $V^{\infty}_{\Gamma}(\Ff)$ then $\Lambda$ is a sampling set for $V^{p}_{\Gamma}(\Ff)$ for any $1 \le p < \infty.$ To this end, we use the approach developed in \cite{MR3336091}. 
Consider an operator
$A:l^p(\Gamma) \to l^{p}(\Lambda)$ given by
\begin{equation}\label{ea}A\cc:= \left\{\sum\limits_{\gamma \in \Gamma}c_{\gamma}\Ff(\lambda - \gamma):\lambda\in\Lambda\right\},  \quad \cc=
\{c_{\gamma}\}.\end{equation}
This operator is given by the matrix $A = \{\Ff(\lambda - \gamma)\} \in \Cc^{\Lambda \times \Gamma}.$ 

Since $\Lambda$ is a sampling set for $V^{\infty}_{\Gamma}(\Ff)$, the operator $A$ is bounded from below for $p = \infty$:
$$
\sup\limits_{\lambda \in \Lambda}\left|\sum\limits_{\gamma \in \Gamma} c_{\gamma} \Ff(\lambda-\gamma)\right| \geq C\|\cc\|_\infty. 
$$
By Lemma~\ref{interpol_sampl_lemma_A}, we deduce that $A$ is bounded from below in $l^p$ for any $1 \le p < \infty.$ Therefore, for every function 
$$
f(x) = \sum\limits_{\gamma \in \Gamma} c_{\gamma} \Ff(x - \gamma)\in V^p_{\Gamma}(\Ff)
$$we get 
$$
\sum\limits_{\lambda \in \Lambda}|f(\lambda)|^p = 
\|A\cc\|_p^p\geq C\|\cc\|_p^p \geq C \|f\|^p_{p},
$$where the last inequality follows from Lemma \ref{lp-stable-bessel}.
This completes the proof. 

\subsection{Proof of Theorem~\ref{sampling_thm2}} The proof of part 
 $(i)$ is similar to the proof of  Theorem~\ref{sampling_thm1} above.

Part (ii) follows from Proposition~\ref{prop_uG}.

\subsection{Proof of Corollary~\ref{interpol_cor}}We will use the following result from Banach theory:
{\it Let $X$ and $Y$ be Banach spaces. 
Let
$U : X\to Y$ be a bounded operator.

$(i)$  $U$ is onto if and only if   the dual operator $U^\ast:Y^\ast\to X^\ast $ is bounded from below. 

$(ii)$  $U$ is bounded from below if and only if   $U^\ast$ is onto.} 

 For the statement (i) we refer the reader to \cite[Theorem~E9]{MR0262773}. The statement (ii) is also well-known, see \cite[Section 10.2.4, Exercise 3]{Kadets2018}.

Let $A:l^p(\Gamma) \to l^{p}(\Lambda)$ be the operator defined in \eqref{ea} 
 The dual operator $A^\ast :l^{p'}(\Lambda) \to l^{p'}(\Gamma), 1/p+1/p'=1,$ is given by
$$
A^\ast{\bf d}:= \left\{\sum\limits_{\lambda \in \Lambda}d_{\lambda} \Ff(\lambda - \gamma):\gamma\in\Gamma\right\},\quad {\bf d}=\{d_\lambda\}.
$$Since $\Lambda$ is a sampling  set for every space $V^p_\Gamma(\Ff),1\leq p\leq\infty$, it  is bounded from below.
Using the above  theorem, one may conclude that $A^\ast$ is onto for every $1\leq p'\leq\infty$. This means  that $\Gamma$ is an interpolation set for $V^{p'}_{\Lambda}(\Ff).$

\section{On Gabor Frames}\label{sec_GF}

\begin{proof}[Proof of Theorem~\ref{GF_cor}]
The proof is a simple consequence of Lemma \ref{GF_sampling_lemma} and our sampling Theorems~\ref{sampling_thm1} and \ref{sampling_thm2}.

(i) Let $\Ff\in \Kk(\alpha)$ satisfy the assumptions of Corollary~\ref{GF_cor} (i).
   Then, by  Lemma \ref{integershifts},   $\Z$-shifts of $\Ff$ are stable. 
Also,  since  $D^{-}(-\Lambda + x)=D^-(\Lambda)$, it follows from the assumptions,   Theorem~\ref{sampling_thm1} (with $\Gamma = \Z$) and Lemma~\ref{GF_sampling_lemma} that $G(\Ff, \Lambda \times  \Z)$ is a frame in $L^2(\R)$.

 The proof of part (iii) is  similar to the proof of part (i).

Parts (ii) and (iv) follow from Lemma \ref{integershifts} and Theorems \ref{sharp_thm},  \ref{sampling_thm2}.
 
\end{proof}

\section{Stability of \texorpdfstring{$\Gamma$}{Gamma}-shifts}\label{sec_app}

\subsection{Estimate from above}\label{SubSA1}

\begin{lemma}\label{LemmaA1}
 Let $1 \le p \le \infty$, $\Gamma$ be a separated set and $\Ff\in W_0$. 
 Then 
    \begin{equation}\label{lp-stable-bessel}
        \left\| \sum\limits_{\gamma \in \Gamma} c_{\gamma} \Ff(\cdot - \gamma) \right\|_p \leq N^{1/q}(\Gamma)\|\Ff\|_{W_0} \|\cc\|_{p},\quad \mbox{for every }
        \cc\in l^p(\Gamma),
    \end{equation}  where   $q=p/(p-1)$ and $N(\Gamma)$ is the covering constant
    $$N(\Gamma):=\sup_{x\in\R}\sum_\Gamma{\bf 1}_{[0, 1]}(x+\gamma).$$
\end{lemma}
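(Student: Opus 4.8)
The plan is to prove the estimate for $1<p<\infty$ by a Hölder splitting, and to treat the endpoints $p=1$ and $p=\infty$ directly, since there the factor $N^{1/q}(\Gamma)$ degenerates to $1$ and to $N(\Gamma)$, respectively. The cornerstone of the whole argument is a single pointwise bound on the density of translates of $\Ff$: for every $x\in\R$,
\begin{equation*}
\sum_{\gamma\in\Gamma}|\Ff(x-\gamma)|\leq N(\Gamma)\,\|\Ff\|_W.
\end{equation*}
To obtain this, I would majorize $|\Ff(x-\gamma)|$ by $\|\Ff\|_{L^\infty(m,m+1)}$, where $m=\lfloor x-\gamma\rfloor$ is the integer cell containing $x-\gamma$, and group the sum according to $m$. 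For each fixed $m$ the number of $\gamma\in\Gamma$ with $x-\gamma\in[m,m+1)$ equals the number of points of $\Gamma$ in an interval of length one, which is at most $N(\Gamma)$ by the definition of the covering constant. Summing $\|\Ff\|_{L^\infty(m,m+1)}$ over $m$ reproduces exactly $\|\Ff\|_W$, which yields the displayed estimate.

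With this in hand the endpoints are immediate. For $p=\infty$ one bounds $|\sum_\gamma c_\gamma\Ff(x-\gamma)|\leq\|\cc\|_\infty\sum_\gamma|\Ff(x-\gamma)|\leq N(\Gamma)\|\Ff\|_W\|\cc\|_\infty$. For $p=1$, Tonelli's theorem gives $\|\sum_\gamma c_\gamma\Ff(\cdot-\gamma)\|_1\leq\sum_\gamma|c_\gamma|\,\|\Ff\|_{L^1(\R)}\leq\|\Ff\|_W\|\cc\|_1$, using the trivial inequality $\|\Ff\|_{L^1(\R)}\leq\|\Ff\|_W$.

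For the main range $1<p<\infty$ I would write, for fixed $x$,
\begin{equation*}
\Big|\sum_{\gamma}c_\gamma\Ff(x-\gamma)\Big|
\leq\sum_{\gamma}|c_\gamma|\,|\Ff(x-\gamma)|^{1/p}\,|\Ff(x-\gamma)|^{1/q},
\end{equation*}
and apply Hölder's inequality with exponents $p$ and $q$. The factor carrying the power $1/q$ is $\big(\sum_\gamma|\Ff(x-\gamma)|\big)^{1/q}\leq(N(\Gamma)\|\Ff\|_W)^{1/q}$ by the pointwise bound above, uniformly in $x$. Raising to the $p$-th power and integrating over $\R$, then interchanging sum and integral by Tonelli, leaves $\int_\R\sum_\gamma|c_\gamma|^p|\Ff(x-\gamma)|\,dx=\|\cc\|_p^p\,\|\Ff\|_{L^1(\R)}\leq\|\cc\|_p^p\,\|\Ff\|_W$. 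Collecting the powers of $N(\Gamma)$ and $\|\Ff\|_W$ and using $p/q=p-1$ gives $\|\sum_\gamma c_\gamma\Ff(\cdot-\gamma)\|_p^p\leq N(\Gamma)^{p-1}\|\Ff\|_W^p\|\cc\|_p^p$; taking $p$-th roots and recalling $(p-1)/p=1/q$ produces the claimed inequality.

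There is no serious obstacle: this is the classical convolution-type bound for the Wiener amalgam space. The only points requiring care are the bookkeeping in the density estimate — verifying that the number of $\gamma$ landing in each unit cell is controlled by the covering constant $N(\Gamma)$ rather than by some $x$-dependent quantity — and the routine tracking of the conjugate exponents, so that the final power of $\|\Ff\|_W$ comes out to be exactly one while the power of $N(\Gamma)$ is exactly $1/q$.
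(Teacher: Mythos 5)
Your proof is correct, and the constants come out exactly right, but you take a genuinely different route from the paper. The paper argues by duality: it tests $\sum_\gamma c_\gamma\Ff(\cdot-\gamma)$ against an arbitrary $h\in L^q$, decomposes the integral over the integer cells $[k,k+1]$ coming from the amalgam norm, applies H\"older in $\gamma$ together with the covering bound $\sum_\gamma\int_{k+\gamma}^{k+1+\gamma}|h|^q\leq N(\Gamma)\|h\|_q^q$, and recovers the $L^p$ norm from $\sup_h\frac{1}{\|h\|_q}\int|\,\cdot\,h|$. You instead run a Schur-test argument: you first prove the uniform pointwise bound $\sum_\gamma|\Ff(x-\gamma)|\leq N(\Gamma)\|\Ff\|_W$ by sorting the translates into unit cells, and then split $|c_\gamma|\,|\Ff(x-\gamma)|$ as $\bigl(|c_\gamma|\,|\Ff(x-\gamma)|^{1/p}\bigr)\cdot|\Ff(x-\gamma)|^{1/q}$ before applying H\"older in $\gamma$ and Tonelli in $x$. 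Both are elementary and give the sharp constant $N(\Gamma)^{1/q}\|\Ff\|_{W}$; your version avoids duality entirely (at the cost of treating $p=1,\infty$ separately, which is trivial), while the paper's single duality computation covers all $1\leq p<\infty$ at once and notes $p=\infty$ as obvious. The only point to state cleanly in your write-up is that the pointwise majorization $|\Ff(x-\gamma)|\leq\|\Ff\|_{L^\infty(m,m+1)}$ holds for almost every $x$ simultaneously for all $\gamma$ (a countable union of null sets), which suffices since the conclusion is an $L^p$ estimate.
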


See Theorem 2.1 in \cite{Jia1991} for a slightly more general result for the multi-dimensional integer-shifts.

\begin{proof}
Since the set $\Gamma$ is separated, we have   $N(\Gamma)<\infty.$ 

The proof is obvious when $p=\infty.$ Therefore, we assume that $1\leq p<\infty$.

Clearly, for every $h\in L^q(\R)$ and $k\in\Z$, we have
$$\sum_{\gamma\in\Gamma}\int_{k+\gamma}^{k+1+\gamma}|h(x)|^q\,dx\leq N(\Gamma)\|h\|_q^q.$$
We will use  this inequality at the end of the following calculations:
$$\int_\R\left| \sum_\Gamma c_\gamma \Ff(x-\gamma)h(x)\right|\,dx\leq \sum_{k\in\Z}\sum_\Gamma |c_\gamma| \int_k^{k+1}|\Ff(x)h(x+\gamma)|\,dx\leq$$
$$\sum_{k\in\Z}\sum_\Gamma\|\Ff\|_{L^\infty(k,k+1)}|c_\gamma|\int_{k+\gamma}^{k+1+\gamma}|h(x)|dx\leq $$$$\sum_{k\in\Z}\|\Ff\|_{L^\infty(k,k+1)}\sum_\Gamma|c_\gamma|\left(\int_{k+\gamma}^{k+1+\gamma}|h(x)|^qdx\right)^{1/q}\leq 
N(\Gamma)^{1/q}\|\Ff\|_{W_0}\|\cc\|_{l^p}\|h\|_q.$$

Finally,
$$\left\|\sum_\Gamma c_\gamma \Ff(x-\gamma)\right\|_p=\sup_{0<\|h\|_q<\infty}\frac{1}{\|h\|_q}\int_\R \left|\sum_\Gamma c_\gamma \Ff(x-\gamma)h(x)\right|dx\leq N(\Gamma)^{1/q} \|\Ff\|_{W_0}\|\cc\|_{l^p}.$$

\end{proof}

\subsection{Estimate from below}\label{SubsA2}

Here we prove Theorem \ref{tstab}.

We assume that $\Gamma$-shifts of $\Ff$ are $l^\infty$-stable, and prove that they are $l^p$-stable, for every $p\in[1,\infty].$
This latter means that $\|f\|_p\geq K_p\|\cc\|_p$, for every function $f$ of the form
 $$f(x) = \sum\limits_{\gamma} c_{\gamma} \Ff(x - \gamma),\quad \cc\in l^p(\Gamma).$$

Choose any positive number $\delta$ and denote by $\ccc$ the family of all sequences 
$$\La=\{\lambda_k\}_{k \in \Z}=\{...<\lambda_k<\lambda_{k+1}<...: \lambda_k\in [(k+1/4)\delta, (k+3/4)\delta],k\in\Z\}.$$

For every $\Lambda\in\ccc$ we denote by $A_\La$ the discretization operator 
defined by
\begin{equation}
    \{A_\La\cc\} :=\{f(\lambda):\lambda\in\La\}= \left\{\sum\limits_{\gamma \in \Gamma} c_{\gamma} \Ff(\lambda - \gamma):\lambda\in\La\right\},\quad \cc=\{c_\gamma\}.
\end{equation}

\begin{claim}\label{discr_claim}
    There exists $\delta_0>0$ such that $\|A_{\La}\cc\|_\infty>K\|\cc\|_\infty,$ for some $K>0$ and every $\cc\in l^\infty(\Gamma)$,  $\La\in\ccc$ and  $\delta\leq\delta_0$.
\end{claim}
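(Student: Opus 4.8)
The plan is to prove the claim directly, without passing to a weak limit, by showing that the family of normalized functions $\{f=\sum_{\gamma}c_\gamma\Ff(\cdot-\gamma):\|\cc\|_\infty\le1\}$ is uniformly equicontinuous, so that the value of $|f|$ cannot drop much between a near-maximizer and a nearby grid point once the grid is fine. By the assumed $l^\infty$-stability there is a constant $c_\infty>0$ with $\|f\|_\infty\ge c_\infty\|\cc\|_\infty$; normalizing $\|\cc\|_\infty=1$ we may pick $x_0\in\R$ with $|f(x_0)|>c_\infty/2$. Every $\Lambda\in\ccc$ is uniformly dense: consecutive nodes satisfy $\lambda_{k+1}-\lambda_k\le\tfrac32\delta$, so there is $\lambda\in\Lambda$ with $|x_0-\lambda|\le\tfrac32\delta$. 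The desired bound $\|A_\Lambda\cc\|_\infty\ge|f(\lambda)|>K$ then follows once I can guarantee $|f(x_0)-f(\lambda)|<c_\infty/4$ for all such $f$, uniformly in $\Lambda\in\ccc$ and all small $\delta$.

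The heart of the matter is therefore a uniform modulus of continuity. First I would note that $\Ff\in W_0\cap C(\R)$ forces $\Ff\in C_0(\R)$: summability of $\|\Ff\|_{L^\infty(k,k+1)}$ makes these norms tend to $0$, so the continuous $\Ff$ vanishes at $\pm\infty$ and is uniformly continuous, with some modulus $\omega$. Writing $\psi_s(u):=\Ff(u)-\Ff(u+s)$, separation of $\Gamma$ and Lemma~\ref{LemmaA1} (with $p=\infty$) give
$$|f(x)-f(x+s)|\le\|\cc\|_\infty\sup_{u}\sum_{\gamma\in\Gamma}|\psi_s(u-\gamma)|\le N(\Gamma)\,\|\psi_s\|_{W_0}\,\|\cc\|_\infty,$$
so it suffices to show $\|\psi_s\|_{W_0}\to0$ as $s\to0$, i.e. that translation is continuous at the origin in the $W_0$-norm for this particular $\Ff$. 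This I would do by the standard central/tail split of $\|\psi_s\|_{W_0}=\sum_k\|\psi_s\|_{L^\infty(k,k+1)}$: for $|k|>M$ the term is bounded (for $|s|\le1$) by a handful of the amalgam summands $\|\Ff\|_{L^\infty(k-1,k+2)}$, whose total is $<\varepsilon/2$ once $M$ is large since $\Ff\in W_0$; for $|k|\le M$ each term is $\le\omega(|s|)$, so the finitely many central terms contribute $\le(2M+1)\omega(|s|)<\varepsilon/2$ once $|s|$ is small. Hence $\|\psi_s\|_{W_0}\to0$.

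Combining, I would choose $\delta_0$ so small that $N(\Gamma)\sup_{|s|\le 3\delta_0/2}\|\psi_s\|_{W_0}<c_\infty/4$. Then for every $\delta\le\delta_0$, every $\Lambda\in\ccc$, and every normalized $\cc$, the near-maximizer estimate yields $|f(\lambda)|\ge|f(x_0)|-|f(x_0)-f(\lambda)|>c_\infty/2-c_\infty/4=c_\infty/4$, whence $\|A_\Lambda\cc\|_\infty>K\|\cc\|_\infty$ with $K:=c_\infty/4$, by homogeneity in $\cc$. The main obstacle is the $W_0$-continuity of translation: it can fail for general $\Ff\in W_0$, and it is precisely where the continuity hypothesis $\Ff\in C(\R)$ (giving $\Ff\in C_0$ and a genuine modulus $\omega$) is used. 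The only other point requiring care is that all ingredients — the density $\tfrac32\delta$ of $\Lambda$, the equicontinuity bound, and the choice of $\delta_0$ — are uniform over the whole family $\ccc$ and over $\delta\le\delta_0$, which they are since none of the constants $N(\Gamma)$, $c_\infty$, $\omega$ depends on the particular grid.
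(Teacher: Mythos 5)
Your proof is correct and follows essentially the same route as the paper: use the assumed $l^\infty$-stability to locate a near-maximizer $x_0$, pick a grid point $\lambda$ within $O(\delta)$ of it, and control $|f(x_0)-f(\lambda)|$ uniformly by splitting into a tail part (handled by the summability in \eqref{wiener}) and a central part (handled by uniform continuity of $\Ff$). The only cosmetic difference is that you package the two-part estimate as continuity of translation in the $W_0$-norm and then invoke Lemma~\ref{LemmaA1} with $p=\infty$, whereas the paper performs the same near/far split directly on the sum over $\gamma$; both are valid and rest on the same use of the hypothesis $\Ff\in W_0\cap C(\R)$.
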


\begin{proof}[Proof of  Claim~{\rm\ref{discr_claim}}]
Since $\Gamma$-shifts of $\Ff$ are $l^\infty$-stable, we have $\|f\|_\infty\geq K_\infty\|\cc\|_\infty$, for  all $\cc\in l^\infty(\Gamma)$ and some $K_\infty>0.$ Then for every $f\in V^\infty_\Gamma(\Ff)$ we may find a point $x_0=x_0(f)$ such that
$|f(x_0)|\geq (K_\infty/2)\|\cc\|_\infty.$ 

Let $\Lambda\in\ccc$. Clearly, there exists  $\lambda_0\in\Lambda$ such that $|\lambda_0-x_0|<2\delta.$
This gives $$|f(\lambda_0)-f(x_0)|\leq \|\cc\|_\infty\sum_\gamma |\Ff(\lambda_0-\gamma)-\Ff(x_0-\gamma)|=\sum_{\gamma:|\gamma-x_0|\leq R}+\sum_{\gamma:|\gamma-x_0|>R},$$where 
by \eqref{wiener} one may choose $R$  so large that 
$$\sum_{\gamma:|\gamma-x_0|> R}|\Ff(\lambda_0-\gamma)|+|\Ff(x_0-\gamma)|<K_\infty/4.$$

On the other hand, since $\Ff\in C(\R)$, it is uniformly continuous on $[-R,R]$. Therefore,
$$\sum_{\gamma:|\gamma-x_0|\leq R}|\Ff(\lambda_0-\gamma)-\Ff(x_0-\gamma)|<K_\infty/4,$$provided $\delta$ is sufficiently small, which proves the claim.
\end{proof}
In what follows we assume that $\delta\leq\delta_0$.
Hence, by the definition of the family $\ccc$, Lemma~\ref{interpol_sampl_lemma_A} and Remark \ref{rr}, we see that there is a constant $C$ such that 
\begin{equation}\label{final}
 \|A_\La\cc\|_p>C\|\cc\|_p, \mbox{ for every }\cc\in l^p(\Gamma) \mbox{ and every } \La\in\ccc.\end{equation}

 Choose any $p,1\leq p<\infty$. Given any $f\in V^p_\Gamma(\Ff)$, choose points $\lambda_k\in [(k+1/4)\delta, (k+3/4)\delta]$ so that 
 $|f(x)|\geq |f(\lambda_k)|, x\in [(k+1/4)\delta, (k+3/4)\delta]$. Then 
 $$\La:=\{\lambda_k:k\in\Z\}\in\ccc.$$Using \eqref{final},  we get 
$$\|f\|_p^p\geq (\delta/2)\sum_{k \in \Z} |f(\lambda_k)|^p= (\delta/2) \|A_\La\cc\|_p^p\geq (\delta/2) C^p\|\cc\|_p^p,$$which proves Theorem \ref{tstab}. 

\section{Acknowledgements}
The second author is grateful to K.~Gr\"{o}chenig, M.~Faulhuber, and I.~Shafkulovska for fruitful discussions. The authors are grateful to the anonymous referee for several helpful suggestions.

\section{Declarations}
{\bf Competing Interests:} On behalf of all authors, the corresponding author states that there is no conflict of interest. 

{\bf Data availability.} There is no data associated with this article.

\end{document}